\theoremstyle{plain}
\newtheorem{theorem}{Theorem}
\theoremstyle{definition}
\theoremstyle{remark}
\newtheorem{remark}[theorem]{Remark}
\theoremstyle{plain}
\newtheorem{proposition}[theorem]{Proposition}
\theoremstyle{plain}
\newtheorem{lemma}[theorem]{Lemma}
\theoremstyle{plain}
\newtheorem{corollary}[theorem]{Corollary}
\theoremstyle{definition}
\newtheorem*{question*}{\it{QUESTION}}
\theoremstyle{plain}
\newtheorem*{oq*}{Open Question}
\newcommand{\N}{\mathbb{N}}
\newcommand{\R}{{\mathbb{R}}}
\newcommand{\C}{{\mathbb{C}}}
\newcommand{\Z}{{\mathbb{Z}}}
\newcommand{\dd}{{\rm d}}
\newcommand{\ii}{{\rm i}}
\newcommand{\e}{{\rm e}}
\newcommand{\ess}{{\rm e}}
\newcommand{\rr}{\operatorname{Re}}
\newcommand{\im}{\operatorname{Im}}
\newcommand{\jj}[2]{J_{#1}(#2)}
\newcommand{\djj}[2]{J'_{#1}(#2)}
\newcommand{\hh}[2]{H^{(1)}_{#1}(#2)}
\newcommand{\dhh}[2]{(H^{(1)}_{#1})'(#2)}
\renewcommand{\Re}{\mathop\mathrm{Re}\nolimits}
\renewcommand{\Im}{\mathop\mathrm{Im}\nolimits}
\newcommand{\dist}{\mathop\mathrm{dist}\nolimits}
\newcommand{\bigO}[1]{\mathcal{O}{\left(#1\right)}} 
\definecolor{DarkGreen}{rgb}{0,0.5,0.1} 
\definecolor{Purple}{rgb}{0.6,0,1}
\begin{document}

\title[]{Optimal Lieb--Thirring type inequalities for Schr\" odinger and Jacobi operators with complex potentials}

\author{Sabine B{\"o}gli}
\address[Sabine B{\"o}gli]{
	Department of Mathematical Sciences, Durham University, Upper Mountjoy, Stockton Road, Durham DH1 3LE, UK
}
\email{sabine.boegli@durham.ac.uk}

\author{Sukrid Petpradittha}
\address[Sukrid Petpradittha]{
	Department of Mathematical Sciences, Durham University, Upper Mountjoy, Stockton Road, Durham DH1 3LE, UK
}
\email{sukrid.petpradittha@durham.ac.uk}

\subjclass[2020]{47B36, 34L40, 47A10, 47A75}

\keywords{Lieb--Thirring inequality, Schr{\" o}dinger operator, Jacobi operator, complex potential, eigenvalue power sums}
\date{\today}

\begin{abstract}
	We prove optimal Lieb--Thirring type inequalities for Schr\"odinger and Jacobi operators with complex potentials. Our results bound eigenvalue power sums (Riesz means) by the $L^p$ norm of the potential, where in contrast to the self-adjoint case, each term needs to be weighted by a function of the ratio of the distance of the eigenvalue to the essential spectrum and the distance to the endpoint(s) thereof. Our Lieb--Thirring type bounds only hold for integrable weight functions. To prove optimality, we establish divergence estimates for non-integrable weight functions. The divergence rates exhibit a logarithmic or even polynomial gain compared to semiclassical methods (Weyl asymptotics) for real potentials.
\end{abstract}

\maketitle

\section{Introduction}

	The $d$-dimensional Schr\"{o}dinger operator in the Hilbert space $L^{2}(\R^{d})$ is defined by
	\[
		H_{V}:=-\Delta+V
	\]
	with a potential $V$. In the following, let $p$ depend on the dimension $d$ as follows:
	\begin{equation}\label{eq:LiebThCond}
		p\geq 1,\;\mbox{ if }d=1; \qquad 
		p>1,\;\mbox{ if }d=2;		 \qquad
		p\geq d/2,\;\mbox{ if }d\geq3.
	\end{equation}
	If a real-valued potential $V$ is sufficiently regular, namely $V\in L^{p}(\R^{d})$, then the essential spectrum  $\sigma_{\ess}(H_{V})$ is $[0,\infty)$ and the discrete spectrum $\sigma_{\dd}(H_{V})$ (isolated eigenvalues of finite algebraic multiplicities) consists of negative eigenvalues which can accumulate only at the point $0$, the bottom of the essential spectrum. The classical Lieb-Thirring inequality (after Lieb and Thirring \cite{lie-thi_prl75,lie-thi_91}) states that there exists a constant $C_{p,d}>0$ depending on $p$ and $d$ such that for all real-valued potentials $V\in L^{p}(\R^{d})$ 
	\begin{equation}\label{eq:LiebTh}
		\sum_{\lambda\in\sigma_{\dd}(H)}|\lambda|^{p-d/2}\leq C_{p,d}\int_{\R^{d}}|V(x)|^{p}\;\dd x,
	\end{equation}
	where in the sum we repeat each eigenvalue according to its (finite) algebraic multiplicity. 
	For more background material on self-adjoint Lieb--Thirring inqualities, see e.g. \cite{inbook,frank2022schrodinger, fra-sim-wei_cmp08,hundertmark2002lieb}.
	
For a complex-valued potential $V\in L^{p}(\R^d)$ we still have $\sigma_{\ess}(H_{V})=[0,\infty)$ but the behaviour of the discrete spectrum can be much more wild. For example, there can be non-zero accumulation points of the discrete spectrum \cite{bogli2017schrodinger,bogli2023counterexample}. 
This immediately implies that the inequality \eqref{eq:LiebTh} is false for general complex potentials $V\in L^p(\R^d)$.
	
	Recent years have seen a significant interest in Lieb--Thirring type inequalities for the complex-potential case, see e.g.\ \cite{bogli2017schrodinger,boegli2023improved,bogli2023counterexample,bogli2025lieb,bogli2021lieb,bor-gol-kup_09,chri-zin_lmp17,cuenin2022schrodinger,demuth2013eigenvalues,dub_14,fra_18,frank2006lieb,golinskii2007lieb,gol-kup_17,GolStep,han_11,hansmann2011inequalities,hun-lie-tho_atmp98,sam_14}.  
	Frank, Laptev, Lieb, and Seiringer \cite{frank2006lieb} proved that for given $p\geq d/2+1$ and $\tau>0$ there exists a constant $C_{p,d,\tau}>0$ such that for all (complex-valued) $V\in L^{p}(\R^{d})$,
	there exists a bound for all eigenvalues outside of a cone around the essential spectrum,
	\begin{equation}\label{eq:LT cone}
		\sum_{\substack{\lambda\in\sigma_{\dd}(H_{V}) \\ |\im\lambda|\geq\tau\rr\lambda}}|\lambda|^{p-d/2}\leq 
		C_{p,d,\tau}\int_{\mathbb{R}^d}|V(x)|^{p}\;\dd x,
	\end{equation}
	where $C_{p,d,\tau}=C_{p,d}\left(1+\frac{2}{\tau}\right)^{p}$ for a constant $C_{p,d}>0$.

	By averaging the bound \eqref{eq:LT cone} with respect to the parameter $\tau$, Demuth, Hansmann and Katriel~\cite{demuth2009discrete} obtained a bound involving \emph{all} eigenvalues, namely, for any $0<\kappa<1$,
	\begin{equation}\label{eq: Lieb for complex}
		\sum_{\lambda\in\sigma_\dd(H_{V})}\dfrac{\text{dist}(\lambda,[0,\infty))^{p+\kappa}}{|\lambda|^{d/2+\kappa}}\leq C_{p,d,\kappa}\int_{\R^d}|V(x)|^{p}\;\dd x,
	\end{equation}
	where $C_{p,d,\kappa}>0$ is a constant depending on $p,d$ and $\kappa$. In \cite{boegli2023improved}, B\"{o}gli  improved the latter Lieb-Thirring type inequalities. More precisely, given a continuous, non-increasing function $f:[0,\infty)\to(0,\infty)$, if
	\begin{equation}\label{eq:OptimalCond}
		\int_{0}^{\infty} f(t)\;\dd t<\infty,
	\end{equation}
	then there exists a constant $C_{p,d,f}>0$ such that for all $V\in L^{p}(\mathbb{R}^{d})$
	\begin{equation}\label{eq:newLTineq_S}
		\sum_{\lambda\in\sigma_\dd(H_{V})}\dfrac{\text{dist}(\lambda,[0,\infty))^{p}}{|\lambda|^{d/2}}f\left(-\log\left(\frac{\text{dist}(\lambda,[0,\infty))}{|\lambda|}\right)\right)\leq C_{p,d,f}\int_{\mathbb{R}^d}|V(x)|^{p}\;\dd x
	\end{equation}
	where $C_{p,d,f}=C_{p,d}\,\left(\int_{0}^{\infty} f(t)\;\dd t+f(0)\right)$ for an $f$-independent constant $C_{p,d}>0$.
Note that the inequality \eqref{eq: Lieb for complex} can be recovered by inserting the exponential function $f(t)={\e}^{-\kappa t}$ into the formula \eqref{eq:newLTineq_S}. We remark that the inequalities \eqref{eq: Lieb for complex} and \eqref{eq:newLTineq_S} are generalisations of the classical Lieb-Thirring inequalities for self-adjoint Schr\"{o}dinger operators as they reduce to \eqref{eq:LiebTh} for a real-valued potential because, in that case, $\dist(\lambda,[0,\infty))=|\lambda|$ for every discrete (negative) eigenvalue.

Define the ratio of the left- and right-hand sides of \eqref{eq:newLTineq_S} by
	\[
		{\rm Ratio}(V,f):=\left(\int_{\mathbb{R}^d}|V(x)|^{p}\;\dd x\right)^{-1}\sum_{\lambda\in\sigma_\dd(H_{V})}\dfrac{\text{dist}(\lambda,[0,\infty))^{p}}{|\lambda|^{d/2}}f\left(-\log\left(\frac{\text{dist}(\lambda,[0,\infty))}{|\lambda|}\right)\right).
	\]
	In dimension $d=1$, B\"{o}gli \cite{boegli2023improved} proved that the assumption \eqref{eq:OptimalCond} cannot be removed. Indeed, if the integral \eqref{eq:OptimalCond} is infinite, then $\sup_{0\neq V\in L^{p}(\R)}{\rm Ratio}(V,f)=\infty$ for any $p\geq 1$.
	More precisely, taking $V_h=\ii h \chi_{[-1,1]}$ with $\chi_{[-1,1]}$ the characteristic function of the interval $[-1,1]$, in the large-coupling limit $0<h\to \infty$
	we have the divergence rate
\begin{equation}\label{eq:rate}
 {\rm Ratio}(V_h,f) \gtrsim F(\varepsilon \log h) 
 \end{equation}
for any $0<\varepsilon<1$ where $F(x):=\int_0^x f(t)\,{\rm d}t$ (see the proof of \cite[Thm.~2.2]{boegli2023improved}).

In the first main result of this paper, we prove that the bound \eqref{eq:newLTineq_S} is optimal in dimensions $d\geq 2$ as well. More precisely, if the integral \eqref{eq:OptimalCond} is infinite, taking the  potential $V_h=\ii h \chi_{B_{1}(0)}$ with $\chi_{B_{1}(0)}$ the characteristic function of the open unit ball in $\R^d$, in the large-coupling limit $0<h\to \infty$
	we prove the divergence rate \eqref{eq:rate} for  any $0<\varepsilon<1$ (see Theorem~\ref{thm: ratio for decreasing func} for the precise statement with uniformity in $f$ and see also 
	Corollary~\ref{cor:ratio for decreasing func no w}).
	In the second main result (Theorem~\ref{thm: ratio for increasing func}), we prove similar divergence rates for \emph{non-decreasing} functions $f$ that satisfy a certain monotonicity assumption. 
	As an application, for $f(t)=\e^{\xi t}$ with $\xi\geq 0$, we obtain the divergence rate
	\[\left(\int_{\mathbb{R}^d}|V_h(x)|^{p}\;\dd x\right)^{-1}
			\sum_{\lambda\in\sigma_\dd(H_{V_h})}\dfrac{{\rm{dist}}(\lambda,[0,\infty))^{p-\xi}}{|\lambda|^{d/2-\xi}} \gtrsim 
			\begin{cases} h^{\varepsilon \xi} & \text{ if }\xi>0,\\
			 \log h & \text{ if } \xi=0 \end{cases} \]
			(see Corollaries~\ref{cor:ratio for increasing func no w},~\ref{cor:exponential growth} for the precise statement and also Remarks~\ref{rem:f=1},~\ref{rem:exp}). This answers a question posed by Cuenin and Frank \cite[Question~2]{CFopenproblem2024}; note that their (equivalent) formulation is the rescaled version of our strong-coupling limit ($h\to \infty$) to study the operators $-\hbar^{2}\Delta+V$ in the semiclassical limit ($\hbar=1/\sqrt{h}\to 0$).
			Our answer proves logarithmic (for $\xi=0$) and polynomial (for $\xi>0$) gain compared to the ideas from semiclassical analysis (or Weyl's law).
			Note that we restrict the parameter to $\xi\geq 0$ since for $\xi<0$ the function $f$ is integrable and we recover the Lieb--Thirring type inequality \eqref{eq: Lieb for complex} (with $\kappa=-\xi$).
			
As a further development, if $p>d/2$, for each $f$ for which the integral \eqref{eq:OptimalCond} is infinite, by taking the sum of potentials $V_h$ with supports sufficently far away from each other, we can construct a potential $V\in L^p(\R^d)$ such that ${\rm Ratio}(V,f)=\infty$ (see Theorem~\ref{thm:sum}). This is a more direct proof of the optimality of the Lieb--Thirring type inequality~\eqref{eq:newLTineq_S}.
	
	Finally, we also show that the $\tau$-dependence of the constant $C_{p,d,\tau}$ in \eqref{eq:LT cone} is sharp (see Theorem~\ref{thm:exponent outside a cone sharp}).

In Section~\ref{sect:jacobi} we prove analogous results for (one-dimensional) Jacobi operators.
	Let $J$ be a Jacobi operator in the Hilbert  space $\ell^{2}(\Z)$ acting on a complex sequence $u=\{u_{n}\}_{n\in\Z}\in \ell^{2}(\Z)$ as
\[
	(Ju)_{n}=a_{n-1}u_{n-1}+b_{n}u_{n}+c_{n}u_{n+1},\quad n\in\Z,
\]
where $\{a_{n}\}_{n\in\Z},\{b_{n}\}_{n\in\Z}$ and $\{c_{n}\}_{n\in\Z}$ are given bounded complex sequences. Then $J$ is a bounded operator and can be represented by the doubly-infinite tridiagonal matrix
\[
	J=
	\begin{pmatrix}
		\ddots & \ddots & \ddots &  & & &\\
		 & a_{-1} & b_{0} & c_{0} & & &\\
		 & & a_{0} & b_{1} & c_{1} & &\\
		 & & & a_{1} & b_{2} & c_{2} &\\
		 & & & & \ddots & \ddots & \ddots
	\end{pmatrix}
	.
\]
The free Jacobi operator $J_{0}$ is defined via the particular case $a_{n}\equiv 1,c_{n}\equiv 1$ and $b_{n}\equiv 0$, i.e. its action on $u$ is given by
\[
	(J_{0}u)_{n}=u_{n-1}+u_{n+1},\quad n\in\Z.
\]
In this circumstance, it is well-known that $\sigma(J_{0})=\sigma_{\ess}(J_{0})=[-2,2]$.
Let $v=\{v_{n}\}_{n\in\Z}$ be a sequence defined by setting
\[
	v_{n}:=\max\{|a_{n-1}-1|,|a_{n}-1|,|b_{n}|,|c_{n-1}-1|,|c_{n}-1|\},\quad n\in\Z.
\]
If $\lim_{|n|\to\infty} v_{n}=0$, then $J$ is a compact perturbation of $J_{0}$ and hence $\sigma_{\ess}(J)=[-2,2]$. Now, the discrete spectrum $\sigma_{\dd}(J)\subset\C\backslash[-2,2]$ consists of isolated eigenvalues of finite algebraic multiplicities that can possibly accumulate anywhere in $[-2,2]$.

For the special case $a_{n}=c_{n}>0$ and $b_{n}\in\R$, the Jacobi operator   $J$ is self-adjoint and the Lieb-Thirring inequalities due to Hundertmark and Simon \cite{hundertmark2002lieb} read that if $v\in \ell^{p}(\Z)$ for some $p\geq 1$, then
\begin{equation}\label{eq:L-T for self-adjoint Jacobi}
	\sum_{\lambda\in\sigma_{\dd}(J),\;\lambda>2} |\lambda-2|^{p-1/2}+\sum_{\lambda\in\sigma_{\dd}(J),\;\lambda<-2} |\lambda+2|^{p-1/2}
	\leq C_p\,\|v\|^{p}_{\ell^{p}}, 
\end{equation}
where $C_p>0$ is a constant depending on $p$ only.

For non-self-adjoint Jacobi operators, there exist Lieb-Thirring type inequalities outside a diamond-shaped sector in the complex plane. These results are due to  Golinskii and Kupin \cite[Thm.~1.5]{golinskii2007lieb} but we use
the formulation of Hansmann and Katriel \cite[Eq.~(8)]{hansmann2011inequalities}.
 For $0\leq\omega<\pi/2$ let us define two sectors  
\[
	\Phi^{\pm}_{\omega}:=\{\lambda\in\C : 2\mp\Re\lambda<\tan(\omega)|\Im\lambda|\}.
\]
Then, by \cite[Eq.~(8)]{hansmann2011inequalities}, for $p\geq 3/2$ there exists a constant $C_{p,\omega}>0$ such that for all $v\in \ell^{p}(\Z)$
\begin{equation}\label{eq:diamond ineq by G-K}
	\sum_{\lambda\in\sigma_{\dd}(J)\cap\Phi^{+}_{\omega}} |\lambda-2|^{p-1/2}+\sum_{\lambda\in\sigma_{\dd}(J)\cap\Phi^{-}_{\omega}} |\lambda+2|^{p-1/2}\leq C_{p,\omega}\|v\|^{p}_{\ell^{p}},
\end{equation}
where $C_{p,\omega}=C_p\,(1+2\tan(\omega))^{p}$ for a constant $C_p>0$.

Hansmann and Katriel  \cite[Thm.~2]{hansmann2011inequalities} used this inequality to prove a bound for \emph{all} eigenvalues. Namely,
for $p\geq 3/2$ and $0<\kappa<1$,  there exists a constant $C_{p,\kappa}>0$ such that for all $v\in \ell^{p}(\Z)$
	\begin{equation}\label{eq:H-K bd for Jacobi}
		\sum_{\lambda\in\sigma_{\dd}(J)} \frac{\dist(\lambda,[-2,2])^{p+\kappa}}{|\lambda^{2}-4|^{1/2+\kappa}}\leq C_{p,\tau}\|v\|^{p}_{\ell^{p}}.
	\end{equation}
Note that this inequality reduces to \eqref{eq:L-T for self-adjoint Jacobi} in the self-adjoint case.

As main results on Jacobi operators, we prove a stronger version of the estimate \eqref{eq:H-K bd for Jacobi} involving an integrable function $f$ (the analogue of \eqref{eq:newLTineq_S} for  Jacobi operators, see Theorem~\ref{thm:new L-T for non-self Jacobi}) and we prove optimality in the sense that if $f$ is not integrable, then no such bound holds and we establish explicit divergence rates 
(see Theorem~\ref{thm:sharpness thm} for non-increasing $f$ and Theorem~\ref{thm:sharpness thm for increasing} for non-decreasing $f$ satisfying a monotonicity assumption). 
Finally, we also show that the $\omega$-dependence of the constant $C_{p,\omega}$ in \eqref{eq:diamond ineq by G-K} is sharp (see Theorem~\ref{thm:sharpconstJacobi}).

	\subsection*{Notation}
The notation $\gtrsim$ ($\lesssim$)  means that the inequality $\geq$ ($\leq$) holds up to a multiplicative constant.
The notation $\ll$ ($\gg$) means that the ratio of the left-hand side to the right-hand side (the right-hand side to the left-hand side) converges to $0$ in the limit.
In most instances we display the involved constants and indicate their dependencies by subscripts (unless stated otherwise, for ease of notation).

	\section{Schr\"odinger operators}

	\subsection{Main results}
	
	In this section we study multidimensional Schr\"{o}dinger operators. 
	The proofs of the following main results will be given in Section~\ref{sect:proofsSchr}.
	
	First we show
	 that the estimate \eqref{eq:newLTineq_S} is optimal in the sense that if the function $f$ is not integrable, then $\sup_{0\neq V\in L^{p}(\R^{d})}{\rm Ratio}(V,f)$ is infinity. To this end, we consider potentials of the form $V_h=\ii h \chi_{B_1(0)}$ for $h>0$ and are interested in the strong-coupling limit $h\to\infty$.
	
	\begin{theorem}\label{thm: ratio for decreasing func}
		Let $d\geq 2$, $p$ satisfy \eqref{eq:LiebThCond} and let $0<\varepsilon<1$. Take a function $w:[0,\infty)\to [1,\infty)$ with $w(h)\to\infty$ as $h\to\infty$ (arbitrarily slowly). Then there exist $C_{p,d}>0$ and $h_{*}\geq 1$ such that for all continuous, non-increasing functions $f:[0,\infty)\to(0,\infty)$ with $\int_{0}^{\infty} f(t)\;\dd t=\infty$ and all $h\geq h_{*}$
		\begin{equation}\label{eq:Sup(V,f) lower bd decreasing}
			{\rm Ratio}(V_h,f)\geq C_{p,d}\left(\frac{F(\varepsilon\log h)}{w(h)}-f(0)w(h)\right),
		\end{equation}
		where $F(x):=\int_{0}^{x} f(t)\;\dd t$.
	\end{theorem}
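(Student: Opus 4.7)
The plan is to construct an explicit family of eigenvalues of $H_{V_h}$ with a controlled distribution of distance ratios to the essential spectrum $[0,\infty)$, generalising the one-dimensional argument of \cite[Thm.~2.2]{boegli2023improved} to $d\ge 2$ via separation of variables. Since $V_h=ih\chi_{B_1(0)}$ is spherically symmetric, $L^2(\R^d)=\bigoplus_{\ell\ge 0}\mathcal H_\ell$ with $\dim\mathcal H_\ell=m_\ell\asymp\ell^{d-2}$ for large $\ell$, and the restriction of $H_{V_h}$ to $\mathcal H_\ell$ is, after the substitution $\psi(r)=r^{(d-1)/2}u(r)$, unitarily equivalent to the half-line operator
\[
L_\nu:=-\partial_r^2+\frac{\nu^2-1/4}{r^2}+ih\chi_{(0,1)}(r)
\]
on $L^2(0,\infty)$ with Dirichlet boundary condition at $r=0$ and $\nu=\ell+(d-2)/2$. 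The regular solution on $(0,1)$ is proportional to $\sqrt r\,J_\nu(\tilde k r)$ with $\tilde k=\sqrt{\lambda-ih}$, $\Im\tilde k<0$, while the $L^2$-decaying solution on $(1,\infty)$ is $\sqrt r\,H^{(1)}_\nu(kr)$ with $k=\sqrt\lambda$, $\Im k>0$. Matching Wronskians at $r=1$ yields a transcendental secular equation whose roots are the eigenvalues of $H_{V_h}$ in the $\ell$-th sector, each of multiplicity at least $m_\ell$.

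For $\ell=0$ the problem reduces essentially to the half-line version of the 1D analysis in \cite[Thm.~2.2]{boegli2023improved}, which supplies a family of eigenvalues $\lambda_{0,j}$ for $1\le j\le\lfloor\varepsilon\log h\rfloor$ whose distance ratios $\dist(\lambda_{0,j},[0,\infty))/|\lambda_{0,j}|$ spread across a logarithmic scale. Extending this to every $\ell$ up to a cutoff $\ell_{\max}(h)\lesssim\sqrt h$ by combining uniform Bessel and Hankel asymptotic expansions (valid in the joint regime of large argument and moderate-to-large order) with a Rouché-type argument locates, in each sector, an eigenvalue $\lambda_{\ell,j}$ of $L_\nu$ close to $\lambda_{0,j}$ with the same asymptotic distance-ratio behaviour. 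Summing the contributions of $\{\lambda_{\ell,j}\}$ to the numerator of $\mathrm{Ratio}(V_h,f)$ with multiplicities $m_\ell$ (so that $\sum_{\ell\le\ell_{\max}}m_\ell\asymp\ell_{\max}^{d-1}$), choosing $\ell_{\max}(h)$ so that a factor $1/w(h)$ emerges from the cumulative multiplicity, and truncating to $j\ge w(h)$ to control the lower tail via $f\le f(0)$, gives a lower bound of order $h^p\bigl(F(\varepsilon\log h)/w(h)-f(0)w(h)\bigr)$. Dividing by $\|V_h\|_p^p\asymp h^p$ then produces \eqref{eq:Sup(V,f) lower bd decreasing}.

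The main technical obstacle is the uniform construction of the eigenvalues $\lambda_{\ell,j}$ across the full range $\ell\le\ell_{\max}(h)$. At high-$j$ levels, $\arg k\to 0^+$ so the Hankel function $H^{(1)}_\nu(k)$ enters its transitional Stokes sector near the positive real axis, and the Bessel function $J_\nu(\tilde k)$ is evaluated at argument $\tilde k\asymp\sqrt h\,e^{-i\pi/4}$ with $\nu$ possibly comparable to $|\tilde k|$; uniform asymptotic expansions valid simultaneously as $k$ approaches the positive real axis from above and as $\nu\to\infty$ are required. Once these expansions are established, the Rouché argument tracking the roots of the secular equation as $\ell$ varies, and the subsequent bookkeeping of multiplicities and the normalisation $\|V_h\|_p^p$, should reduce to the 1D analysis already carried out in \cite{boegli2023improved}.
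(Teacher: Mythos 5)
Your overall strategy — spherical-harmonic decomposition into sectors of multiplicity $\asymp\ell^{d-2}$, matching regular Bessel and decaying Hankel solutions at $r=1$ to get a secular equation, locating roots via a Rouch\'e argument, and then summing the contributions with multiplicities — is exactly what the paper does (see Proposition~\ref{prop:evls_ineq} and its preparation in Lemmas~\ref{lem:aux_zeros}--\ref{lem:aux_eq}). However, there is a genuine gap in how you propose to organise the index ranges, and it is not merely a bookkeeping issue: it determines whether the uniform Bessel asymptotics you would need are available at all.

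You propose to start at $\ell=0$, treat it as essentially the 1D half-line problem, and then extend to general $\ell\le\ell_{\max}(h)\lesssim\sqrt h$ by a Rouch\'e-type perturbation, finding $\lambda_{\ell,j}$ near $\lambda_{0,j}$. The paper does the opposite: it works exclusively with \emph{large} $\nu$ (the index set $\mathcal L_h$ starts at $\ell\ge h^{\alpha(h)+1/2}\gg1$) because the phase-function representation $\theta_\nu$ used to rewrite the secular equation requires $\nu\ge1$, and the multiplicity gain $\ell^{d-2}$ is worthless unless $\ell$ is large. More importantly, the eigenvalue $\lambda_{\ell,j}$ is \emph{not} close to $\lambda_{0,j}$ uniformly over your proposed range, since $m^{(0)}_{\nu,j}$ contains the term $\nu\pi/2$; the perturbation is small only when $\nu\ll j$. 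The paper enforces precisely this via the coupling $j\ge\ell/g(h)$ in $\mathcal J_{h,\ell}$ (equivalently $\ell\le jg(h)$ in $\tilde{\mathcal L}_{h,j}$) with $g(h)\to0$, which is what guarantees $\nu/|m|=\mathcal O(g(h))\to0$ uniformly (see \eqref{eq:unif_lim_inproof}). This is exactly what puts the problem in the Debye regime where the elementary asymptotics $\theta_\nu(z)\approx z-\nu\pi/2-\pi/4$ hold; the ``transitional Stokes sector'' and ``$\nu$ comparable to $|\tilde k|$'' difficulties that you flag at the end are never encountered in the paper because the index coupling rules that regime out. A fixed $\ell_{\max}$, decoupled from $j$, would put many $(\ell,j)$ pairs into exactly the regime you identify as an unresolved obstacle, so your proposal as written does not close.

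Secondly, the mechanism that produces the $w(h)$-dependence is not ``choosing $\ell_{\max}$ so $1/w(h)$ emerges from the cumulative multiplicity'' together with a truncation $j\ge w(h)$. In the paper, $w(h)$ enters through the choice of the auxiliary decay rates $g(h)$ and $\alpha(h)$: one imposes $g^{d-1}(h)\ge1/w(h)$ (giving the $F(\varepsilon\log h)/w(h)$ term after the $\ell$-sum yields a factor $(jg(h))^{d-1}$ and $j^d$ is cancelled against $|\lambda|^{d/2}$) and $g^{d-1}(h)\log(ch^{2\alpha(h)}/g^2(h))\le w(h)$ (giving the $-f(0)w(h)$ term via $F(x)\le f(0)x$ applied to the lower limit of the $j$-integral). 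The lower limit of the $j$-range is of order $h^{\alpha(h)+1/2}/g(h)$, far above $w(h)$. These choices in \eqref{eq: assum of g} are what make the two terms in \eqref{eq:Sup(V,f) lower bd decreasing} appear; a plain threshold $j\ge w(h)$ would not reproduce them. To repair your argument you would need to (i) abandon the extension from $\ell=0$, work only with $\ell$ in a range where $\nu\ge1$ and $\nu/j$ is small uniformly, and (ii) tie the upper bound on $\ell$ to $j$ (e.g.\ $\ell\le jg(h)$) and derive the $w(h)$-dependence from constraints on $g$ and $\alpha$ rather than from a hard $j$-cutoff.
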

	
	\begin{remark}\label{rem: ratio for decreasing func}
	We remark that even though \eqref{eq:newLTineq_S} requires $p\geq d/2+1$, here Theorem \ref{thm: ratio for decreasing func} does not.
	Note that the right-hand side of \eqref{eq:Sup(V,f) lower bd decreasing} is divergent whenever $w(h)$ diverges sufficiently slowly, for example when $(w(h))^{2}\ll F(\varepsilon\log h)$ as $h\to\infty$. 
	\end{remark}

We use the function $w$ to show the explicit (uniform) dependence on $f$. However, for a fixed function $f$, we can apply the estimate for a function $w$ that diverges arbitrarily slowly and thus obtain an improvement of~\eqref{eq:Sup(V,f) lower bd decreasing}.
	The next result concerns this improvement.
	
	\begin{corollary}\label{cor:ratio for decreasing func no w}
		Let $d\geq 2$, $p$ satisfy \eqref{eq:LiebThCond} and $0<\varepsilon<1$. Given a continuous, non-increasing function $f:[0,\infty)\to(0,\infty)$ with $\int_{0}^{\infty} f(t)\;\dd t=\infty$, there exist $C>0$ and $h_{*}\geq 1$ (both possibly $f$-dependent) such that for all $h\geq h_{*}$
		\begin{equation}\label{eq:no w}
			{\rm Ratio}(V_h,f)\geq CF(\varepsilon \log h).
		\end{equation}
	\end{corollary}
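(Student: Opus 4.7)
The plan is to derive Corollary~\ref{cor:ratio for decreasing func no w} from Theorem~\ref{thm: ratio for decreasing func} by exploiting the freedom to tailor the function $w$ to the (now fixed) $f$. The discussion preceding the corollary indicates that the role of $w$ in the theorem is to ensure uniformity in $f$; for a single non-integrable $f$, one expects to be able to absorb the $1/w(h)$ factor into an $f$-dependent constant $C$.

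The natural first attempt is to apply Theorem~\ref{thm: ratio for decreasing func} directly with the given $\varepsilon$ and pick a function $w$ that diverges as slowly as possible. The theorem then gives
\[
    \mathrm{Ratio}(V_h,f) \geq C_{p,d}\left(\frac{F(\varepsilon \log h)}{w(h)} - f(0)\,w(h)\right).
\]
Since $F(\varepsilon\log h)\to\infty$, for $w$ diverging slowly enough (say $w(h)^2\leq F(\varepsilon\log h)/(2f(0))$) the negative term is dominated by half of the positive term, leaving $\mathrm{Ratio}(V_h,f)\gtrsim F(\varepsilon\log h)/w(h)$. This is close to, but still short of, the target bound.

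To upgrade to the full $F(\varepsilon\log h)$ on the right I would revisit the proof of Theorem~\ref{thm: ratio for decreasing func} and locate the step where $w(h)$ is introduced: it plays the role of a cut-off (or of the spacing of auxiliary test eigenvalues) chosen to control $f$-dependent error terms \emph{uniformly in $f$}. For a fixed $f$ those errors can be bounded by a finite $f$-dependent constant, so the cut-off can be replaced by a constant $W_f$ depending on $f$. Substituting $W_f$ for $w(h)$ throughout the proof yields
\[
    \mathrm{Ratio}(V_h,f) \geq C_{p,d}\left(\frac{F(\varepsilon \log h)}{W_f} - f(0)\,W_f\right) \geq \frac{C_{p,d}}{2W_f}\, F(\varepsilon \log h)
\]
for all $h$ large enough (using $F(\varepsilon\log h)\to\infty$ to absorb the $f(0)W_f$ term), which is precisely \eqref{eq:no w} with $C = C_{p,d}/(2W_f)$.

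The main obstacle is justifying that the proof of Theorem~\ref{thm: ratio for decreasing func} really goes through with a bounded $w$ once $f$ has been fixed. If the construction genuinely requires $w(h)\to\infty$, a fallback is to apply the theorem with some $\varepsilon'\in(\varepsilon,1)$ and choose $w$ depending on $f$ so that $F(\varepsilon'\log h)/w(h) \gtrsim F(\varepsilon\log h)$; this succeeds cleanly when $F(\varepsilon'\log h)/F(\varepsilon\log h)\to\infty$, but for slowly growing $F$ (e.g.\ $f(t)=1/(1+t)$, $F(x)\sim\log x$) one is forced back to the proof of the theorem to remove the $w$-cut-off for the specific $f$ at hand.
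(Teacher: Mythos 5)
Your main route does not close. In the proof of Theorem~\ref{thm: ratio for decreasing func}, the function $w$ enters only through the constraint~\eqref{eq: assum of g}, whose first part reads $1/w(h)\leq g^{d-1}(h)\leq 1$. The auxiliary function $g$ is \emph{required} to tend to $0$ (it governs the index sets $\mathcal L_h,\mathcal J_{h,\ell}$, and $g(h)\to 0$ is used throughout Lemmas~\ref{lem:aux_zeros}--\ref{lem:aux_eq} and Proposition~\ref{prop:evls_ineq} to get $\nu/\Re m\lesssim g(h)\to 0$, $|\Im m|/\Re m\lesssim g(h)\to 0$, $|\xi_\nu(m)|\lesssim h^{\gamma-1/2}+g^2(h)\to 0$, etc.). If you try to run the argument with a bounded $w(h)\equiv W_f$, the inequality $g^{d-1}(h)\geq 1/W_f$ forces $g$ to stay bounded away from zero, and the eigenvalue asymptotics collapse. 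So the "absorb $w$ into an $f$-dependent constant" step is not a cosmetic modification of the theorem's proof; it would require redoing the preliminary lemmas, and as stated it is incompatible with them. Your fallback via a larger $\varepsilon'$ you already concede fails whenever $F(\varepsilon'\log h)/F(\varepsilon\log h)$ stays bounded, which is exactly the case $F(x)\sim\log x$, i.e.\ the borderline non-integrable $f$ that the corollary most needs to cover.

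The paper removes $w$ by a contradiction argument that does \emph{not} touch the proof of Theorem~\ref{thm: ratio for decreasing func}: assume \eqref{eq:no w} fails for every $C$, set $a_h:=F(\varepsilon\log h)/{\rm Ratio}(V_h,f)$, extract a sequence $h_n\to\infty$ with $a_{h_n}\to\infty$, interpolate a non-decreasing $u$ with $u(h_n)=a_{h_n}$, and then choose $w$ diverging so slowly that $w^2(h)\ll\min\{u(h),F(\varepsilon\log h)\}$. Theorem~\ref{thm: ratio for decreasing func} with this $w$ gives $C_{p,d}\leq w(h)/a_h$, but along $h_n$ this forces $C_{p,d}\leq a_{h_n}^{-1/2}\to 0$, a contradiction. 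The key idea you are missing is that the freedom to choose $w$ after the supposed counterexample lets you play the uniform theorem against itself; you do not need (and cannot get) a bounded $w$.
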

	
	Next we broaden the study of divergence rates of the ratios to get lower bounds for the class of (positive, continuous) \emph{non-decreasing} functions. In exchange, we require the monotonicity of the \emph{tail} of the function $f(\log t^{2})/t$. 
	\begin{theorem}\label{thm: ratio for increasing func}
		Let $d\geq 2$, $p$ satisfy \eqref{eq:LiebThCond} and let $0<\varepsilon<1\leq x_0$. Take $w:[0,\infty)\to [1,\infty)$ with $w(h)\to\infty$ as $h\to\infty$ (arbitrarily slowly). Then there exist $C_{p,d}>0$ and $h_{*}\geq 1$ such that for all $h\geq h_{*}$ and all continuous, non-decreasing functions $f:[0,\infty)\to (0,\infty)$ such that $f(\log t^{2})/t$ is monotonic for $t\geq x_{0}$ one has
		\begin{equation}\label{eq:Sup(V,f) lower bd increasing}
			{\rm Ratio}(V_h,f)\geq\frac{C_{p,d}}{w(h)}\left(F(\varepsilon\log h)-F\left(\frac{\varepsilon}{2}\log h\right)\right)\geq C_{p,d}\,\frac{\varepsilon f(0)}{2w(h)}\log h.
		\end{equation}
	\end{theorem}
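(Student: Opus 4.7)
The plan is to re-use the explicit family of approximate eigenvalues of $H_{V_h}=-\Delta+\ii h\chi_{B_1(0)}$ constructed for Theorem~\ref{thm: ratio for decreasing func} and to extract a lower bound adapted to non-decreasing $f$. Because $f$ is non-decreasing (so $f(0)$ is no longer an upper bound on $f$, as it was in the non-increasing case), I would concentrate on those eigenvalues whose normalized distance $r:=\dist(\lambda,[0,\infty))/|\lambda|$ is small; there $f(-\log r)$ is large, and the resulting discrete sum over such eigenvalues can then be compared with an integral thanks to the tail monotonicity hypothesis on $t\mapsto f(\log t^{2})/t$.

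Concretely, I would first import the eigenvalues $\{\lambda_{n,j}\}$ from the proof of Theorem~\ref{thm: ratio for decreasing func} together with the asymptotics for $|\lambda_{n,j}|$, $\dist(\lambda_{n,j},[0,\infty))$ and their counting function. In that construction $-\log r_{n,j}$ ranges over $[0,\varepsilon\log h]$ with roughly uniform density, and the $f\equiv 1$ contribution on each dyadic level in this range is of order $h^{p}/w(h)$ (after dividing by $\|V_h\|_p^p=\mathrm{vol}(B_1(0))\,h^p$). I would then restrict to the subrange $-\log r_{n,j}\in[\tfrac{\varepsilon}{2}\log h,\varepsilon\log h]$ and decompose dyadically in the variable $t:=\e^{(-\log r_{n,j})/2}$, so that $\log t^{2}=-\log r_{n,j}$. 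On each dyadic window non-decreasingness of $f$ yields $f(-\log r_{n,j})\geq f(\log t_k^{2})$, giving a contribution of at least $(h^{p}/w(h))\cdot f(\log t_k^{2})/t_k$ (up to constants) to the numerator of $\mathrm{Ratio}(V_h,f)$.

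Finally, the hypothesis that $t\mapsto f(\log t^{2})/t$ is monotonic for $t\geq x_{0}$ allows the resulting discrete sum $\sum_k f(\log t_k^{2})/t_k$ to be compared with $\int f(\log t^{2})/t\,\dd t$; after the change of variables $s=\log t^{2}$ this becomes $\tfrac{1}{2}\bigl(F(\varepsilon\log h)-F(\varepsilon\log h/2)\bigr)$, giving the first inequality in~\eqref{eq:Sup(V,f) lower bd increasing} after absorbing constants into $C_{p,d}$. The second inequality is immediate from $f\geq f(0)$ integrated over $[\tfrac{\varepsilon}{2}\log h,\varepsilon\log h]$. The main obstacle is this final sum-to-integral step: the precise shape of the tail monotonicity hypothesis (on $f(\log t^{2})/t$ rather than on $f$ itself) is dictated by the density of eigenvalues from Step~1, and aligning the two while keeping the boundary errors of the comparison absorbed into the slowly diverging $w(h)$ is where the delicate work lies.
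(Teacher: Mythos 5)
Your proposal follows essentially the same route as the paper's proof: import the eigenvalue construction and asymptotics from the non-increasing case, use that $f$ is non-decreasing to bound $f(-\log r_{n,j})$ from below on the subrange $-\log r\in[\tfrac{\varepsilon}{2}\log h,\varepsilon\log h]$, and invoke the monotonicity of $t\mapsto f(\log t^2)/t$ to convert the resulting discrete sum $\sum_j\frac1j f(\log(\pi^2 j^2/h))$ into $\tfrac12\int f(s)\,\dd s$ over that range, yielding $F(\varepsilon\log h)-F(\tfrac\varepsilon2\log h)$; the second inequality in the theorem is indeed the trivial lower bound $f\geq f(0)$. (One small slip: the per-dyadic-window contribution should be of order $w(h)^{-1}f(\log t_k^2)$, not $w(h)^{-1}f(\log t_k^2)/t_k$ — after dividing by $\|V_h\|_p^p$ the $h^p$ disappears, and the $\approx t_k$ eigenvalues in each window each contribute $\sim 1/t_k\cdot f(\log t_k^2)$, so the $1/t_k$ factors cancel; with this correction the dyadic sum does approximate $\int f(\log t^2)\,\dd t/t$ as you want.)
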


	Again we get an improvement for a fixed $f$.

	\begin{corollary}\label{cor:ratio for increasing func no w}
	Let $d\geq 2$, $p$ satisfy \eqref{eq:LiebThCond} and let $0<\varepsilon<1\leq x_0$. Given a continuous, non-decreasing function $f:[0,\infty)\to (0,\infty)$ such that $f(\log t^{2})/t$ is monotonic for $t\geq x_{0}$, there exist $C>0$ and $h_{*}\geq 1$ (both possibly $f$-dependent) such that for all $h\geq h_{*}$ 
		\begin{equation}\label{eq:no w for increasing func}
			{\rm Ratio}(V_h,f)\geq C\left(F(\varepsilon\log h)-F\left(\frac{\varepsilon}{2}\log h\right)\right).
		\end{equation}
	\end{corollary}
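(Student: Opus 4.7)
The plan is to deduce the corollary from Theorem~\ref{thm: ratio for increasing func} by observing that the diverging weight $w$ in the theorem encodes only a uniformity in $f$, and for a fixed $f$ it may effectively be replaced by a constant. Theorem~\ref{thm: ratio for increasing func} provides
\[
{\rm Ratio}(V_h, f) \geq \frac{C_{p,d}}{w(h)}\left(F(\varepsilon \log h) - F\left(\tfrac{\varepsilon}{2}\log h\right)\right)
\]
for every $w : [0,\infty) \to [1,\infty)$ with $w(h) \to \infty$, where $C_{p,d}$ and $h_*$ are independent of $f$ and of $w$. The role of the assumption $w(h) \to \infty$ is to absorb an $f$-dependent error term appearing in the proof into a bound that is uniform in $f$.

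My plan is to revisit the proof of Theorem~\ref{thm: ratio for increasing func} with $f$ now held fixed. At the step where $w(h) \to \infty$ is invoked, the $f$-dependent error is then a fixed finite quantity depending only on $f$ and $\varepsilon$. Taking $w \equiv w_0$ with $w_0 = w_0(f)$ sufficiently large, and allowing the threshold $h_*$ to depend on $f$, the same chain of estimates yields
\[
{\rm Ratio}(V_h, f) \geq \frac{C_{p,d}}{w_0}\left(F(\varepsilon \log h) - F\left(\tfrac{\varepsilon}{2}\log h\right)\right),
\]
which is precisely \eqref{eq:no w for increasing func} with $C := C_{p,d}/w_0$.

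The main obstacle is to localize cleanly the $f$-dependent step in the proof of Theorem~\ref{thm: ratio for increasing func} and to verify that the weight may indeed be taken constant once $f$ is fixed. A direct application of the theorem with a diverging choice such as $w(h) = \log\log h$ would only give
\[
{\rm Ratio}(V_h, f) \geq \frac{C_{p,d}}{\log\log h}\left(F(\varepsilon\log h) - F\left(\tfrac{\varepsilon}{2}\log h\right)\right),
\]
which still carries a genuine $1/\log\log h$ loss and falls short of the desired bound. Hence the corollary really does require a slight reopening of the proof rather than a mere substitution of $w$, and the whole content of the argument lies in checking that the bound in Theorem~\ref{thm: ratio for increasing func} survives this replacement without any logarithmic penalty.
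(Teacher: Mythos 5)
Your description of where the weight $w$ enters is not accurate, and the proposed fix does not work as stated. In the proof of Theorem~\ref{thm: ratio for increasing func}, there is no ``$f$-dependent error term'' that $w(h)\to\infty$ absorbs: the constant $C_{p,d}$ and the threshold $h_*$ in that theorem are already uniform in $f$. The factor $1/w(h)$ arises from the constraint $1/w(h)\le g^{d-1}(h)$ in \eqref{eq:new assum of g}, where $g$ is the auxiliary function controlling the error terms in the eigenvalue asymptotics (Lemmas~\ref{lem:aux_zeros}--\ref{lem:aux_eq} and Proposition~\ref{prop:evls_ineq}). All of those preliminary results are stated and proved under the standing hypothesis that $g$ is non-increasing with $g(h)\to 0$ as $h\to\infty$. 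Since $g^{d-1}(h)\to 0$, the constraint $1/w(h)\le g^{d-1}(h)$ forces $w(h)\to\infty$. Taking $w\equiv w_0$ constant is therefore equivalent to taking $g$ bounded below by a positive constant, which directly contradicts the set-up under which the eigenvalue construction was carried out. You would have to re-derive every preliminary lemma under a ``$g$ small constant'' hypothesis and then re-run the whole argument; your proposal simply asserts this step without carrying it out, and it is not obvious that it goes through since several estimates (e.g.\ \eqref{eq:sup_im_lim}, \eqref{eq:xi_small}, \eqref{eq:err_small}) would then no longer have vanishing right-hand sides.

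The paper's actual proof is quite different and avoids this issue entirely: it is a contradiction argument (mirroring the proof of Corollary~\ref{cor:ratio for decreasing func no w}). Assuming \eqref{eq:no w for increasing func} fails, one extracts a sequence $h_n\to\infty$ along which the quantity
\[
a_h:=\frac{F(\varepsilon\log h)-F(\tfrac{\varepsilon}{2}\log h)}{{\rm Ratio}(V_h,f)}
\]
diverges, interpolates it by a non-decreasing $u$, and then chooses $w(h)\to\infty$ so slowly that $w^2(h)\ll u(h)$. Applying Theorem~\ref{thm: ratio for increasing func} with this $w$ gives $C_{p,d}\le w(h_n)/a_{h_n}\le u^{1/2}(h_n)/a_{h_n}=a_{h_n}^{-1/2}\to 0$, a contradiction. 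This uses the theorem as a black box with an adaptively chosen $w$, rather than trying to remove $w$ from inside the proof. You should either adopt that contradiction argument or actually verify that the whole chain of Lemmas~\ref{lem:aux_zeros}--\ref{lem:aux_eq} and Proposition~\ref{prop:evls_ineq} survives with $g$ replaced by a suitably small constant; as written, your argument has a genuine gap.
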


	\begin{remark}\label{rem:f=1}
For the special case of a constant weight function $f\equiv 1$ in \eqref{eq:newLTineq_S}, the validity of a Lieb--Thirring type estimate was published as an open question by Demuth, Hansmann, and Katriel in \cite{demuth2013lieb}. The construction in \cite{bogli2021lieb} answered the question to the negative in dimension $d=1$, and in \cite{bogli2025lieb} the construction was generalised to higher dimensions, with the same class of potentials $V_h$ as studied in the present paper. Note that in \cite{bogli2025lieb}, a lower bound of the form
		${\rm Ratio}(V_h,f) \geq C_{p,d} (\log h)^{\varepsilon}$ was found where $0<\varepsilon<1$.
		The new results presented here are an improvement over these results because Corollary~\ref{cor:ratio for increasing func no w} for $f\equiv 1$ yields a divergence order of at least $\log h$.
	\end{remark}

To answer \cite[Question~2]{CFopenproblem2024}, we apply Theorem~\ref{thm: ratio for increasing func} to the exponential function $f(t)=\e^{\xi t}$ for $\xi>0$ (for $\xi=0$ we have $f\equiv 1$ which was discussed in the latter Remark).

	\begin{corollary}\label{cor:exponential growth}
		Let $d\geq 2$, $p$ satisfy \eqref{eq:LiebThCond} and let $0<\varepsilon<1$. Take a function $w:[0,\infty)\to [1,\infty)$ with $w(h)\to\infty$ as $h\to\infty$ (arbitrarily slowly). Then there exist $C_{p,d}>0$ and $h_{*}\geq 1$ such that for all $h\geq h_{*}$ and all $\xi>0$
		\begin{equation}\label{eq:Sup(V,f) lower bd for exponential}
			\left(\int_{\mathbb{R}^d}|V_h(x)|^{p}\;\dd x\right)^{-1}
			\sum_{\lambda\in\sigma_\dd(H_{V_h})}\dfrac{{\rm{dist}}(\lambda,[0,\infty))^{p-\xi}}{|\lambda|^{d/2-\xi}}\geq C_{p,d}\frac{h^{\varepsilon\xi}}{\xi w(h)}(1-h^{-\varepsilon\xi/2}).
		\end{equation}
	\end{corollary}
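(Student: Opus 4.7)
The plan is to apply Theorem~\ref{thm: ratio for increasing func} directly to the exponential weight $f(t)=\e^{\xi t}$, with $\xi>0$. The corollary will follow once I verify the hypotheses of the theorem for $f$ and then evaluate the resulting estimate explicitly.

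First I would check admissibility. Clearly $f(t)=\e^{\xi t}$ is continuous, positive, and non-decreasing on $[0,\infty)$. For the tail monotonicity, I compute
\[
\frac{f(\log t^{2})}{t}=\frac{\e^{\xi\log t^{2}}}{t}=t^{2\xi-1},
\]
which is monotonic on $(0,\infty)$ (increasing if $\xi\geq 1/2$, decreasing if $\xi<1/2$), so the assumption holds with any $x_{0}\geq 1$. Hence Theorem~\ref{thm: ratio for increasing func} applies to $f$ uniformly in $\xi>0$.

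Next I would rewrite the left-hand side of~\eqref{eq:Sup(V,f) lower bd for exponential} as ${\rm Ratio}(V_{h},f)$. Inserting $f(t)=\e^{\xi t}$ into the weight in ${\rm Ratio}(V,f)$ gives
\[
f\!\left(-\log\frac{\dist(\lambda,[0,\infty))}{|\lambda|}\right)=\left(\frac{\dist(\lambda,[0,\infty))}{|\lambda|}\right)^{-\xi}=\frac{|\lambda|^{\xi}}{\dist(\lambda,[0,\infty))^{\xi}},
\]
so that each term $\dist(\lambda,[0,\infty))^{p}/|\lambda|^{d/2}$ in the sum becomes $\dist(\lambda,[0,\infty))^{p-\xi}/|\lambda|^{d/2-\xi}$, matching precisely the left-hand side of~\eqref{eq:Sup(V,f) lower bd for exponential}.

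Finally I would substitute the explicit primitive $F(x)=\int_{0}^{x}\e^{\xi t}\,\dd t=(\e^{\xi x}-1)/\xi$ into Theorem~\ref{thm: ratio for increasing func}. This yields
\[
F(\varepsilon\log h)-F\!\left(\tfrac{\varepsilon}{2}\log h\right)=\frac{h^{\varepsilon\xi}-h^{\varepsilon\xi/2}}{\xi}=\frac{h^{\varepsilon\xi}}{\xi}\bigl(1-h^{-\varepsilon\xi/2}\bigr),
\]
and combined with the first inequality in~\eqref{eq:Sup(V,f) lower bd increasing} delivers~\eqref{eq:Sup(V,f) lower bd for exponential} with the constant $C_{p,d}$ inherited from Theorem~\ref{thm: ratio for increasing func}. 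There is no genuine obstacle: the only care required is observing that the constant in Theorem~\ref{thm: ratio for increasing func} and the threshold $h_{*}$ are $f$-independent (hence $\xi$-independent), which is exactly what that theorem provides (the hypothesis $f(\log t^{2})/t$ monotonic holds uniformly in $\xi>0$ with the same $x_{0}=1$).
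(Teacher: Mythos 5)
Your proposal is correct and follows essentially the same route as the paper: apply Theorem~\ref{thm: ratio for increasing func} to $f(t)=\e^{\xi t}$, verify $f(\log t^{2})/t=t^{2\xi-1}$ is monotonic with $x_{0}=1$, and compute $F(\varepsilon\log h)-F(\varepsilon\log h/2)$ explicitly. Your added remark that $C_{p,d}$ and $h_{*}$ are $\xi$-independent because the theorem's constants are uniform over admissible $f$ for fixed $x_{0}$ is a worthwhile clarification of a point the paper leaves implicit.
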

	
	\begin{remark}\label{rem:exp}
		With aid of the equation~\eqref{eq:no w for increasing func} for $f(t)={\e}^{\xi t}$, we get a divergence rate of at least $C h^{\varepsilon\xi}$ for a (possibly $\xi$-dependent) constant $C>0$.
	\end{remark}

The following result proves optimality of the Lieb--Thirring type inequality~\eqref{eq:newLTineq_S} in a more direct way.
\begin{theorem}\label{thm:sum}
Let $d\in\N$ and let $p$ satisfy \eqref{eq:LiebThCond} with $p>d/2$. Then, for every continuous, non-increasing function $f:[0,\infty)\to(0,\infty)$ with $\int_{0}^{\infty} f(t)\;\dd t=\infty$ there exists $V\in L^p(\R^d)$ such that
${\rm Ratio}(V,f)=\infty.$
\end{theorem}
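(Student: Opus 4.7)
The plan is to exploit the scaling invariance of ${\rm Ratio}(\cdot,f)$ to produce, from the single-ball potentials $V_h$ of Theorem~\ref{thm: ratio for decreasing func}, dilated copies with arbitrarily small $L^p$-norm yet preserved (diverging) Ratio, and then to superpose well-separated translates. For $h,R>0$ set $W_{h,R}(x):=\ii hR^{-2}\chi_{B_R(0)}(x)$. The substitution $y=x/R$ in the eigenvalue equation gives $\sigma_\dd(H_{W_{h,R}})=R^{-2}\sigma_\dd(H_{V_h})$, so the ratio $\dist(\mu,[0,\infty))/|\mu|$ (and hence the argument of $f$) is preserved, while both the numerator and the denominator of ${\rm Ratio}$ pick up the same factor $R^{d-2p}$. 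Consequently ${\rm Ratio}(W_{h,R},f)={\rm Ratio}(V_h,f)\geq CF(\varepsilon\log h)$ by Corollary~\ref{cor:ratio for decreasing func no w} (for $d=1$ one uses \cite{boegli2023improved}). The crucial point is that since $p>d/2$, we now have $\|W_{h,R}\|_p^p=h^p R^{d-2p}|B_1|\to 0$ as $R\to\infty$ for every fixed $h$.

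Take any summable positive sequence, e.g.\ $a_n=n^{-2}$. Since $F\to\infty$, choose $h_n$ so large that $F(\varepsilon\log h_n)\geq n$, and then $R_n$ so that $\|W_{h_n,R_n}\|_p^p=a_n$. Select centres $y_n\in\R^d$ with $|y_n-y_m|$ much larger than $R_n+R_m$ (to be quantified below) and define $V(x):=\sum_{n\geq 1}W_{h_n,R_n}(x-y_n)$. The summands have pairwise disjoint supports, so $\|V\|_p^p=\sum_n a_n<\infty$ and hence $V\in L^p(\R^d)$.

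The principal obstacle is to transfer the eigenvalues of each individual piece to genuine discrete eigenvalues of $H_V$ with controlled weights. To this end I would employ a Birman--Schwinger argument: the discrete eigenvalues of $H_V$ correspond to the points $\lambda\in\C\setminus[0,\infty)$ at which $-1\in\sigma(|V|^{1/2}(-\Delta-\lambda)^{-1}V_{\operatorname{sgn}}^{1/2})$, and the disjoint-support structure of $V$ yields a natural block decomposition of this operator whose off-diagonal blocks are controlled by the free resolvent kernel between distinct balls $B_{R_n}(y_n),\,B_{R_m}(y_m)$. Since that kernel decays in $|y_n-y_m|$ (exponentially on bounded subsets of $\C\setminus[0,\infty)$ staying a fixed positive distance from the ray), a recursive choice of the separations makes the total off-diagonal perturbation arbitrarily small, and a Rouch\'e-type argument for the Birman--Schwinger determinant shows that each discrete eigenvalue of $H_{W_{h_n,R_n}}$ produces a nearby discrete eigenvalue of $H_V$ with the same algebraic multiplicity and weight distorted by a factor at most $2$.

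Combining these ingredients,
\[
{\rm Ratio}(V,f)\;\geq\;\frac{\tfrac12\sum_n (\text{eigenvalue sum for }H_{W_{h_n,R_n}})}{\sum_n a_n}\;\geq\;\frac{\tfrac12 C\sum_n F(\varepsilon\log h_n)\,a_n}{\sum_n a_n}\;\geq\;\frac{\tfrac12 C\sum_n n^{-1}}{\sum_n n^{-2}}\;=\;\infty,
\]
as required. The technical heart of the proof is the quantitative eigenvalue-localisation step, which must be carried out uniformly in $n$ and with particular care near $[0,\infty)$, where the weight $f(-\log(\cdot))$ can be large but the resolvent decay rate degrades.
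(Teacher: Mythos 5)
Your proposal follows the same strategy as the paper's proof: exploit the scale invariance of ${\rm Ratio}(\cdot,f)$ together with $p>d/2$ to shrink the $L^p$-mass of dilated copies of $V_h$ without changing the ratio, then superpose translates with pairwise disjoint supports, and transfer the eigenvalues of each piece to genuine eigenvalues of $H_V$. Your parameter choice is in fact cleaner than the paper's: you let both $h_n$ and the dilation $R_n$ vary, choosing $h_n$ so that $F(\varepsilon\log h_n)\geq n$ and $R_n$ so that $\|W_{h_n,R_n}\|_p^p=n^{-2}$; the paper instead fixes $h_n=n$ and chooses a scaling $c_n$ via a carefully constructed auxiliary function $k(x)=-\frac{\rm d}{{\rm d}x}(F(\varepsilon x))^{-1/2}$, for which it must then check that $\sum k(\log n)/n<\infty$ while $\sum F(\varepsilon\log n)k(\log n)/n=\infty$. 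Your telescoping choice makes the final divergence transparent.

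However, the step you identify as ``the technical heart'' — transferring discrete eigenvalues of the individual pieces $H_{W_{h_n,R_n}}$ to discrete eigenvalues of $H_V$, uniformly in $n$ and in the infinite-sum limit, with multiplicities and weights approximately preserved — is only sketched, not proved, and this is where the paper does nontrivial work. The paper's Claim~1 is a refinement of \cite[Lem.~2]{bogli2017schrodinger} that additionally must perturb one potential by a dilation factor $r_\delta$ close to $1$ so that the finite lists of eigenvalues of the two operators do not collide; your Rouch\'e sketch does not address this resonance issue, which would spoil simplicity and the multiplicity count for the Birman--Schwinger determinant. Secondly, after the inductive construction one has an infinite sum, and one needs the eigenvalues to survive passing to the limit $\tilde V_N\to V$ in $L^p$; the paper handles this via \cite[Lem.~5.4]{han_11} and a careful budgeting of the precisions $\tilde\delta_n$ so that $\sum_{n\geq N}\tilde\delta_n\leq\delta_N$, with $\delta_n<|\Im\lambda_{n,j}|$ keeping the approximate eigenvalues away from $[0,\infty)$. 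Your remark that care is needed near $[0,\infty)$ acknowledges exactly this point, but a quantitative separation argument is essential for the factor-of-$2$ bound on the weights that your final display uses. So the overall architecture is sound and even slightly simpler than the paper's, but the superposition/localisation lemma remains a genuine gap that must be filled (either by adapting \cite[Lem.~2]{bogli2017schrodinger} as the paper does, or by carrying out your Birman--Schwinger plan in full).
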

\begin{remark}
It would be interesting to know whether the result continues to hold for $d\geq 3$ and $p=d/2$. The scaling argument that is used in the proof breaks down at this point.
\end{remark}

In dimension $d=1$, B\"{o}gli \cite[Thm~2.4]{boegli2023improved} proved that the $\tau$-dependence of the constant $C_{p,d,\tau}$ in \eqref{eq:LT cone}, i.e.\ the order $\tau^{-p}$ as $\tau\to 0$, is sharp. Here we prove sharpness in dimensions $d\geq 2$.

\begin{theorem}\label{thm:exponent outside a cone sharp}
	Let $d\geq 2$, $p$ satisfy \eqref{eq:LiebThCond} and let $\varphi:(0,\infty)\to(0,\infty)$ be a continuous function such that $\varphi(\tau)\ll\tau^{-p}$ as $\tau\to 0$. Then
	\begin{equation}\label{eq:LT cone exponent sharp}
		\limsup_{\tau\to 0^{+}}\sup_{0\neq V\in L^{p}(\R^{d})} 
		\left( \varphi(\tau)\int_{\mathbb{R}^d}|V(x)|^{p}\;\dd x\right)^{-1}
		\sum_{\substack{\lambda\in\sigma_{\dd}(H_{V}) \\ |\im\lambda|\geq\tau\rr\lambda}}
		|\lambda|^{p-d/2}=\infty.
	\end{equation}
\end{theorem}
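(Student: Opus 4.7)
The plan is to argue by contradiction, converting the assumed cone-type bound into a weighted Lieb--Thirring inequality of the form \eqref{eq:newLTineq_S} and then invoking the divergence from Corollary~\ref{cor:ratio for decreasing func no w} applied to the potentials $V_h=\ii h\chi_{B_1(0)}$. Suppose the $\limsup$ in \eqref{eq:LT cone exponent sharp} were finite. Then there exist $\tau_{0}\in(0,1)$ and $C>0$ such that for every $V\in L^{p}(\R^{d})\setminus\{0\}$ and every $\tau\in(0,\tau_{0})$,
\begin{equation*}
\sum_{\substack{\lambda\in\sigma_{\dd}(H_{V})\\|\Im\lambda|\geq \tau\Re\lambda}}|\lambda|^{p-d/2}\leq C\varphi(\tau)\|V\|_{p}^{p}.\qquad(\ast)
\end{equation*}
I would integrate $(\ast)$ in $\tau$ against the measure $\dd\mu(\tau):=p\tau^{p-1}f(\log(1/\tau))\,\dd\tau$ on $(0,\tau_{0})$, where $f\colon[0,\infty)\to(0,\infty)$ is a continuous, non-increasing function with $\int_{0}^{\infty}f=\infty$ to be chosen. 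Fubini's theorem then yields
\begin{equation*}
\sum_{\lambda\in\sigma_{\dd}(H_{V})}|\lambda|^{p-d/2}\,\mu\big((0,\min\{s_{\lambda},\tau_{0}\})\big)\leq C\|V\|_{p}^{p}\int_{0}^{\tau_{0}}\varphi(\tau)\,\dd\mu(\tau),
\end{equation*}
where $s_{\lambda}:=|\Im\lambda|/\Re\lambda$ if $\Re\lambda>0$ and $s_{\lambda}:=+\infty$ otherwise.

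For any $s\in(0,\tau_{0}]$, restricting the integration in $\mu$ to $[s/\e,s]$ and using monotonicity of $f$ gives the lower bound $\mu((0,s))\geq(1-\e^{-p})\,s^{p}f(1+\log(1/s))$. Combined with the elementary identity $\dist(\lambda,[0,\infty))/|\lambda|=s_{\lambda}/\sqrt{1+s_{\lambda}^{2}}$ for $\Re\lambda>0$ (so that $s_{\lambda}$ and $\dist(\lambda,[0,\infty))/|\lambda|$ agree up to a factor of $\sqrt{2}$ when $s_{\lambda}\leq 1$), this yields, for eigenvalues with $s_{\lambda}\leq\tau_{0}$,
\begin{equation*}
\mu\big((0,s_{\lambda})\big)\,|\lambda|^{p-d/2}\gtrsim_{p} \frac{\dist(\lambda,[0,\infty))^{p}}{|\lambda|^{d/2}}\,\tilde f\!\left(-\log\frac{\dist(\lambda,[0,\infty))}{|\lambda|}\right),
\end{equation*}
where $\tilde f(u):=f(u+1+\tfrac{1}{2}\log 2)$ is continuous, non-increasing, with $\int_{0}^{\infty}\tilde f=\infty$. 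The remaining eigenvalues (those with $\Re\lambda\leq 0$ or $s_{\lambda}>\tau_{0}$) satisfy $|\Im\lambda|\geq\tau_{0}\Re\lambda$, so applying $(\ast)$ at $\tau=\tau_{0}$ bounds their contribution to the $\tilde f$-weighted Lieb--Thirring type sum by $\tilde f(0)C\varphi(\tau_{0})\|V\|_{p}^{p}$ (since for those eigenvalues $\dist(\lambda,[0,\infty))/|\lambda|$ is bounded below by a fixed positive constant). Combining the two ranges, $\mathrm{Ratio}(V,\tilde f)\leq C_{f,\tau_{0}}$ uniformly in $V\in L^{p}(\R^{d})\setminus\{0\}$. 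Applied to $V=V_{h}$ as $h\to\infty$, this contradicts Corollary~\ref{cor:ratio for decreasing func no w}.

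The main obstacle is the choice of $f$: one needs $\int_{0}^{\tau_{0}}\varphi(\tau)\,\dd\mu(\tau)<\infty$, which after the substitution $u=\log(1/\tau)$ reads $p\int_{\log(1/\tau_{0})}^{\infty}f(u)g(u)\,\dd u<\infty$ where $g(u):=\tau^{p}\varphi(\tau)|_{\tau=\e^{-u}}$. The hypothesis $\varphi(\tau)\ll\tau^{-p}$ is precisely the statement $g(u)\to 0$ as $u\to\infty$. Passing to the non-increasing envelope $\tilde g(u):=\sup_{v\geq u}g(v)\to 0$, a dyadic block construction---choose $u_{n}\nearrow\infty$ with $\tilde g(u_{n})\leq 2^{-n}$ and define $f$ piecewise constant with height $c_{n}=2^{-n}$ on a block of length $L_{n}=2^{n}$ (so $c_{n}$ is non-increasing while $c_{n}L_{n}=1$), followed by mild smoothing---yields a continuous, non-increasing $f\colon[0,\infty)\to(0,\infty)$ with $\int f=\infty$ yet $\int fg<\infty$. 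The step where $\varphi(\tau)\ll\tau^{-p}$ is essential is exactly here; if instead $\varphi(\tau)\asymp\tau^{-p}$ then $g$ does not vanish at infinity and no such $f$ exists, consistently with the validity of \eqref{eq:LT cone} at that order. The remaining ingredients---the elementary comparison of $s_{\lambda}$ and $\dist(\lambda,[0,\infty))/|\lambda|$, the Fubini interchange, and verifying that $\tilde f$ satisfies the hypotheses of Corollary~\ref{cor:ratio for decreasing func no w}---are routine.
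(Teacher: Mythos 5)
Your argument is correct in its essentials and follows a genuinely different route than the paper's proof. The paper establishes~\eqref{eq:LT cone exponent sharp} directly: it works with the test potentials $V_h=\ii h\chi_{B_1(0)}$, chooses $\tau=\tau(h)\sim h^{-2\beta}$, defines modified index sets $\mathcal L_h', \mathcal J_{h,\ell}'$, and uses Proposition~\ref{prop:evls_ineq} together with the lower bound on the multiplicities to show the supremum diverges explicitly. Your proof is a contrapositive: if the cone bound held with a weight $\varphi(\tau)\ll\tau^{-p}$, then averaging over $\tau$ against a tailored measure $\dd\mu(\tau)=p\tau^{p-1}f(\log(1/\tau))\,\dd\tau$ would produce a weighted Lieb--Thirring inequality of the form~\eqref{eq:newLTineq_S} with a non-integrable weight $\tilde f$, contradicting Corollary~\ref{cor:ratio for decreasing func no w}. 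This is essentially the Demuth--Hansmann--Katriel averaging device run in reverse: where they derived~\eqref{eq: Lieb for complex} from~\eqref{eq:LT cone} by averaging, you observe that a \emph{stronger} cone bound would average up to a \emph{forbidden} Lieb--Thirring inequality. Your approach is more conceptual and makes the logical coupling between Theorem~\ref{thm:exponent outside a cone sharp} and the failure of non-integrable-weight inequalities transparent; the paper's direct computation has the advantage of yielding an explicit divergence rate and not relying on the machinery of Theorem~\ref{thm: ratio for decreasing func} / Corollary~\ref{cor:ratio for decreasing func no w}.

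I checked the individual steps: the Fubini interchange and the identity $\dist(\lambda,[0,\infty))/|\lambda|=s_\lambda/\sqrt{1+s_\lambda^2}$ for $\Re\lambda>0$ are correct; the lower bound $\mu((0,s))\geq(1-\e^{-p})s^p f(1+\log(1/s))$ by restricting to $[s/\e,s]$ is correct; the shift to $\tilde f$ and the separate treatment of eigenvalues with $\Re\lambda\leq 0$ or $s_\lambda>\tau_0$ via $(\ast)$ at $\tau=\tau_0$ are correct; and $\tilde f$ inherits the hypotheses of Corollary~\ref{cor:ratio for decreasing func no w}. One point deserves tightening: in the block construction for $f$, the specific choice $c_n=2^{-n}$, $L_n=2^n$ does not literally work if the points $u_n$ at which $\tilde g(u_n)\leq 2^{-n}$ grow faster than the cumulative block lengths $\sum_{k<n}2^k$. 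The fix is standard: define the block endpoints $a_n$ inductively so that $a_{n+1}=a_n+L_n$ with $L_n:=\max(L_{n-1},\,u_{n+1}-a_n)$ and height $c_n:=1/L_n$; then $c_n$ is non-increasing, each block contributes $1$ to $\int f$ (so $\int f=\infty$), and $\int f\tilde g\leq\sum_n\tilde g(a_n)\leq\sum_n 2^{-n}<\infty$. Piecewise-linear smoothing then gives continuity without affecting the estimates. With this adjustment the argument is complete.
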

The proof relies again on eigenvalue estimates for the class of potentials $V_h$ for $h>0$.

\subsection{Preliminaries}\label{Preliminaries}

	We devote this section to preparations for the proofs of our main results and use this opportunity to introduce our notation and terminology. The key ingredient is the asymptotics in \cite{bogli2025lieb} on eigenvalues $\lambda_{\ell,j}$, with error bounds that were shown to be uniform in two parameters (integers) $j, \ell$ in certain $h$-dependent index sets. The stronger results in the present paper require to work with larger index sets that depend on the function $w(h)$ used in Theorems~\ref{thm: ratio for decreasing func} and~\ref{thm: ratio for increasing func}.
	Hence, in the following we summarise the approach from \cite{bogli2025lieb} and show that the asymptotics continue to hold for the new index sets depending on $w(h)$, with adapted error bounds.  
	
	First of all, 
we note that since the potential $V_h$ is purely imaginary with non-negative imaginary part, a numerical range argument \cite[Lem.~2]{bogli2025lieb} shows that all eigenvalues $\lambda$ belong to the first quadrant of the complex plane  ($\Re \lambda\geq 0$ and $\im \lambda\geq 0$) and hence $\dist(\lambda,[0,\infty))=\im\lambda$.

Since the potential $V_h$ is spherically symmetric, we find solutions of the eigenvalue problem
$-\Delta f+V_{h}f=\lambda f $
by using spherical coordinates and solving a corresponding radial eigenvalue problem. 
To this end, we use complex parameters $k, m$ as follows: let $m\in\C$ with  $\Re m>0$ and set  $k:=\sqrt{\ii h+m^{2}}$ where we take the principal branch of the square root function.	
We assume that $\im k=\im \sqrt{\ii h+m^{2}} >0$.
For $\ell\in\N_{0}$, we  make the ansatz $f(x)=\psi(|x|)Y^{(\ell)}(x/|x|)$ where $Y^{(\ell)}$ is the spherical harmonic of degree $\ell$, defined on the $d$-dimensional unit sphere. 
Then, by \cite[Sect.~2.2]{bogli2025lieb}, $f\in L^{2}(\R^{d})$ is an eigenfunction corresponding to the eigenvalue $\lambda:=k^2=\ii h+m^{2}$ if $\psi\in L^2((0,\infty),r^{d-1}\,{\rm d}r)$ is the radial  ($r=|x|$) function defined by
	\[
		\psi(r)=	
		\begin{cases}
			H^{(1)}_\nu(k)r^{1-d/2}J_{\nu}(mr)\quad&\textrm{if\;\;} 0<r<1,\\
			J_\nu(m)r^{1-d/2}H^{(1)}_\nu(kr)\quad&\textrm{if\;\;} r\geq 1,
		\end{cases}
	\]
	and $m,k$ satisfy the characteristic equation
	\begin{equation}\label{eq:CharacteristicSign}
		\frac{k}{m}=\dfrac{\djj{\nu}{m}\hh{\nu}{k}}{\jj{\nu}{m}\dhh{\nu}{k}};
	\end{equation}
here $J_{\nu},H^{(1)}_{\nu}$ are respectively the Bessel and Hankel functions of the first kind of order 
\begin{equation}\label{eq:Nudefinition}
		\nu=\ell+\frac{d}{2}-1.
	\end{equation} 
	For the theory of Bessel functions and their classical asymptotics we refer to, for instance, \cite{abramowitz1948handbook,dlmf,olver1997asymptotics,watson1995treatise}.
Standard results on the Laplacian in spherical coordinates (see e.g. \cite{stein1971introduction} and also \cite[Thm.~3.49]{frank2022schrodinger}) imply that each eigenvalue $\lambda$ has the algebraic multiplicity at least
\begin{equation}\label{eq:eigenspace}
		\binom{d+\ell-1}{d-1}-\binom{d+\ell-3}{d-1}.
\end{equation}
	
The set-up in \cite[Sect.~4.1]{bogli2025lieb} introduced the constants $\alpha,\beta,\gamma$ and $\varepsilon$ with the following conditions
	\[
		0<\alpha<\beta<\gamma<\frac{1}{2}\qquad\text{and}\qquad 0<\varepsilon<1.
	\] 
	Here we make the modification to let $\alpha=\alpha(h)$ depend on  $h>0$ while still satisfying the above restrictions. 
	More precisely, we fix the parameters $0<\beta<\gamma<1/2$ and let $\alpha(h)\in (0,\beta)$ for all $h>0$. Assume further that $\alpha(h)$ converges to $0$ so slowly that $h^{-\alpha(h)}\to 0$ as $h\to\infty$. Now, let us take an arbitrary non-increasing function $g:(0,\infty)\to(0,1]$ such that $g(h)\to 0$ as $h\to\infty$ but so slowly that
	\begin{equation}\label{eq:gbound}
		g(h)\geq 2h^{\beta-\gamma}
	\end{equation}
	for all $h>1$.
	
	Instead of using the index sets $\mathcal L(h)$, $\mathcal J(h,\ell)$ in \cite[Eq.~(44), (46)]{bogli2025lieb}, we replace $\alpha$ by $\alpha(h)$ and define the sets
	\begin{equation}\label{eq:restr_ell}
		\mathcal{L}_{h}:=\left\{\ell\in\N : h^{\alpha(h)+1/2}\leq\ell\leq h^{\beta+1/2}\right\}
	\end{equation}
	for $h>0$, and
	\begin{equation}\label{eq:set_J}
		\mathcal{J}_{h,\ell}:=\left\{j\in\N : \frac{\ell}{g(h)}\leq j\leq h^{\gamma+1/2} \right\}
	\end{equation}
	for $h>0$ and $\ell\in\mathcal L_h$. Note that $g(h)$ replaces $\log^{-q}\ell$ in \cite[Eq.~(46)]{bogli2025lieb} and, by~\eqref{eq:gbound}, $\mathcal{J}_{h,\ell}\neq\emptyset$ when $h$ is large enough.
	
Let $j\in\N$, $\nu>0$ and $h>0$. We adopt the  auxiliary functions \cite[Eq.~(47)]{bogli2025lieb}
	\[
		f_{\nu,j}(z):=\theta_{\nu}(z)-\frac{\pi}{4}-2\pi j-\ii\log\frac{\sqrt{h}}{4\pi j},
	\]
	where $\theta_{\nu}(z)$ is the phase function given in terms of Bessel functions by 
	\[
		\theta_\nu(z)=\arctan \frac{Y_\nu(z)}{J_\nu(z)}
	\]
	with the standard branch satisfying $\theta_\nu(x)\to -\pi/2$ as $x\to 0^{+}$, see \cite[Sect.~3.2]{bogli2025lieb}.
	It was shown therein that there exists $A>0$ such that, for $\nu\geq1$, this branch of $\theta_{\nu}$ is an analytic function in the open convex set
\begin{equation}
 \mathcal{M}_{\nu}:=\{z\in\C :  A\nu<\Re z \mbox{ and } |z|<2\Re z\}.
 \label{eq:set_M}
\end{equation}
%
Following the arguments in \cite{bogli2025lieb}, we find asymptotics for the zeros of $f_{\nu,j}$ with error terms that are uniform in $j\in \mathcal J_{h,\ell}, \ell\in\mathcal L_h$. These asymptics give rise to asymptotic solutions of the characteristic equation \eqref{eq:CharacteristicSign}.
	
As in \cite[Eq.~(48)]{bogli2025lieb}, we define
	\[
		m_{\nu,j}^{(0)}:=2\pi j+\frac{\nu\pi}{2}+\frac{\pi}{2}+\ii\log\frac{\sqrt{h}}{4\pi j}.
	\]
	The following result is the analogue of \cite[Lem.~10]{bogli2025lieb} for the index sets $\mathcal L_h$, $\mathcal J_{h,\ell}$ in \eqref{eq:restr_ell}, \eqref{eq:set_J}.
	\begin{lemma}\label{lem:aux_zeros}
	Let $\nu=\ell+\frac{d}{2}-1$. Then there exists $h_{*}\geq 1$ such that for all $h\geq h_{*}$, all $\ell\in\mathcal{L}_h$ and all $j\in\mathcal{J}_{h, \ell}$, the following claims hold true:
\begin{enumerate}[(i)]
\item The function $f_{\nu,j}$ is analytic in the ball $B_{\nu}(m_{\nu,j}^{(0)})$ with a unique simple zero $m_{\nu,j}^{(1)}$ therein;
\item $|m_{\nu,j}^{(1)}-m_{\nu,j}^{(0)}|<\nu/2$;
\end{enumerate}
and, in addition, for any two indices $j_{1},j_{2}\in\mathcal{J}_{h,\ell}$, $j_1\neq j_2$, we have 
\begin{enumerate}
\item[(iii)] $|m_{\nu,j_1}^{(1)}-m_{\nu,j_2}^{(1)}|>4$.
\end{enumerate}
	\end{lemma}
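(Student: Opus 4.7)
The plan is to adapt the argument of \cite[Lem.~10]{bogli2025lieb} to the enlarged index sets~\eqref{eq:restr_ell} and~\eqref{eq:set_J}. The underlying idea is to recast the equation $f_{\nu,j}(m)=0$ as a small perturbation of the linear equation $m=m_{\nu,j}^{(0)}$ and apply Rouch\'e's theorem on a disc centered at $m_{\nu,j}^{(0)}$. The main novelty compared with~\cite{bogli2025lieb} is to check that the error bounds in the asymptotic expansion of the phase function $\theta_\nu$ remain valid uniformly over $\ell\in\mathcal L_h$ and $j\in\mathcal J_{h,\ell}$, now that the constant $\alpha$ has been replaced by the slowly decaying function $\alpha(h)$ and the factor $\log^{-q}\ell$ by $g(h)$.

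First I would verify that $B_\nu(m_{\nu,j}^{(0)})\subset\mathcal M_\nu$, which ensures analyticity of $f_{\nu,j}$ on the disc (half of (i)). By definition, $\Re m_{\nu,j}^{(0)}=2\pi j+(\nu+1)\pi/2$ and $|\im m_{\nu,j}^{(0)}|\leq\tfrac12\log h$, while $j\in\mathcal J_{h,\ell}$ together with $g(h)\to 0$ forces $j/\nu\geq 1/(2g(h))\to\infty$. Hence, for $m\in B_\nu(m_{\nu,j}^{(0)})$ and $h$ large, $\Re m\geq 2\pi j-\nu\geq A\nu$ and $|m|^2\leq(\Re m)^2+O((\log h)^2+\nu^2)<4(\Re m)^2$, so $m\in\mathcal M_\nu$ by~\eqref{eq:set_M}.

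Next I would apply Rouch\'e's theorem using the Debye-type asymptotics of~\cite{bogli2025lieb}, namely $\theta_\nu(m)=m-\nu\pi/2-\pi/4+r_\nu(m)$ with $|r_\nu(m)|=O(\nu^2/|m|)$, which yields
\[
f_{\nu,j}(m)=(m-m_{\nu,j}^{(0)})+r_\nu(m).
\]
On the circle $|m-m_{\nu,j}^{(0)}|=\nu$, the remainder satisfies $|r_\nu(m)|\leq C\nu^2/j\leq 2C\nu g(h)<\nu$ for $h$ large, so Rouch\'e's theorem produces a unique simple zero $m_{\nu,j}^{(1)}$ inside $B_\nu(m_{\nu,j}^{(0)})$, establishing~(i). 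Evaluating the displayed identity at $m_{\nu,j}^{(1)}$ gives the refined estimate $|m_{\nu,j}^{(1)}-m_{\nu,j}^{(0)}|=|r_\nu(m_{\nu,j}^{(1)})|=O(\nu g(h))$, which is in particular less than $\nu/2$ for $h$ large, proving~(ii).

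The separation bound~(iii) is then automatic from the stronger estimate $|m_{\nu,j}^{(1)}-m_{\nu,j}^{(0)}|=O(\nu g(h))=o(1)$ uniformly in $j,\ell$. Indeed, for $j_1\neq j_2$ in $\mathcal J_{h,\ell}$ one has $|\Re(m_{\nu,j_1}^{(0)}-m_{\nu,j_2}^{(0)})|\geq 2\pi$, so
\[
|m_{\nu,j_1}^{(1)}-m_{\nu,j_2}^{(1)}|\geq 2\pi-2\sup_{j\in\mathcal J_{h,\ell}}|m_{\nu,j}^{(1)}-m_{\nu,j}^{(0)}|\geq 2\pi-o(1)>4
\]
for $h$ large enough. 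The principal technical hurdle is ensuring the uniformity of the Debye asymptotics of $\theta_\nu$ in the enlarged index sets; here the slow decay of $\alpha(h)$ and the lower bound~\eqref{eq:gbound} on $g(h)$ are exactly what keep the relative error $\nu^2/|m|\approx \nu g(h)$ under control.
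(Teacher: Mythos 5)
Your verification that $B_\nu(m_{\nu,j}^{(0)})\subset\mathcal M_\nu$ and the Rouch\'e argument for claims (i) and (ii) follow the same route as the paper, which likewise establishes the uniform bound $\nu/\Re m\lesssim g(h)$ on the relevant balls and then defers to the analogue of \cite[Lem.~10]{bogli2025lieb}. Your handling of the uniformity issues introduced by replacing $\alpha$ with $\alpha(h)$ and $\log^{-q}\ell$ with $g(h)$ is correct.

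However, the argument for (iii) contains a genuine error. You claim
$|m_{\nu,j}^{(1)}-m_{\nu,j}^{(0)}|=O(\nu g(h))=o(1)$ uniformly in $j,\ell$,
and then deduce the separation bound from $|\Re(m_{\nu,j_1}^{(0)}-m_{\nu,j_2}^{(0)})|\geq 2\pi$ by a triangle inequality. But $\nu g(h)$ is \emph{not} $o(1)$: from~\eqref{eq:restr_ell} we have $\nu\gtrsim h^{\alpha(h)+1/2}$, while~\eqref{eq:gbound} only gives $g(h)\geq 2h^{\beta-\gamma}$, so
\[
\nu\, g(h)\gtrsim h^{\alpha(h)+1/2+\beta-\gamma}\longrightarrow\infty
\]
as $h\to\infty$, since $\gamma<1/2$ forces the exponent to be positive. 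Thus the correction $|m_{\nu,j}^{(1)}-m_{\nu,j}^{(0)}|$ may vastly exceed the $2\pi$ spacing between the centers $m_{\nu,j}^{(0)}$, and the naive triangle inequality gives nothing. Indeed, the discs $B_{\nu/2}(m_{\nu,j}^{(0)})$ for consecutive $j$ overlap heavily, so (iii) is genuinely \emph{not} an automatic consequence of (ii).

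A correct argument for (iii) must exploit structure beyond the crude bound in (ii). The key observation is that $f_{\nu,j_1}-f_{\nu,j_2}$ is the \emph{constant} $2\pi(j_2-j_1)+\ii\log(j_1/j_2)$, of modulus at least $2\pi$, while $f_{\nu,j}'=\theta_\nu'$ is uniformly close to $1$ on the region of interest (by the same large-argument expansion you invoke, $\theta_\nu'(m)=1+O(\nu^2/|m|^2)=1+O(g^2(h))$). Integrating $\theta_\nu'$ along the segment from $m_{\nu,j_2}^{(1)}$ to $m_{\nu,j_1}^{(1)}$ (which stays in the convex set $\mathcal M_\nu$) yields
\[
2\pi\leq\left|\theta_\nu(m_{\nu,j_1}^{(1)})-\theta_\nu(m_{\nu,j_2}^{(1)})\right|
\leq\left(\sup|\theta_\nu'|\right)\left|m_{\nu,j_1}^{(1)}-m_{\nu,j_2}^{(1)}\right|
\leq\left(1+O(g^2(h))\right)\left|m_{\nu,j_1}^{(1)}-m_{\nu,j_2}^{(1)}\right|,
\]
so $|m_{\nu,j_1}^{(1)}-m_{\nu,j_2}^{(1)}|\geq 2\pi(1+o(1))>4$ for $h$ large. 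Your proof as written does not supply this step and would not close without it.
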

	\begin{proof}
First we show that there exists a constant $C>0$ such that 
\begin{equation}\label{eq:unif_lim_inproof}
			\sup\left\{\frac{\nu}{\Re m} : |m-m_{\nu,j}^{(0)}|\leq\nu, \ell\in\mathcal{L}_{h}, j\in\mathcal{J}_{h,\ell} \right\}\leq Cg(h)
		\end{equation}
		for all $h$ sufficiently large.
To this end, we use~\eqref{eq:set_J} to estimate
\[
	\frac{\Re m_{\nu,j}^{(0)}}{\nu}=\frac{2\pi j}{\nu}+\frac{\pi}{2}+\frac{\pi}{2\nu}\geq\frac{j}{\nu}\gtrsim\frac{j}{\ell}\geq \frac{1}{g(h)}
\]
for all $h$ sufficiently large, where the non-displayed constant is independent of the choices of $\ell\in\mathcal{L}_h$ and $j\in\mathcal{J}_{h, \ell}$. Hence, for any $m$ in the closure of the ball $B_{\nu}(m_{\nu,j}^{(0)})$, we get
\[
\frac{\Re m}{\nu}\geq \frac{\Re m_{\nu,j}^{(0)}}{\nu}-1\gtrsim \frac{1}{g(h)},
\]
which implies $\Re m>0$ and \eqref{eq:unif_lim_inproof}.

Next we show that,  for all $h$ sufficienly large, $B_{\nu}(m_{\nu,j}^{(0)})\subset\mathcal{M}_{\nu}$ for all $\ell\in\mathcal{L}_{h}$ and all $ j\in\mathcal{J}_{h,\ell}$, where $\mathcal M_\nu$ is as in \eqref{eq:set_M}.
First, it follows readily from~\eqref{eq:unif_lim_inproof} that, for all $h$ sufficiently large, we have $A\nu<\Re m$ for all $m\in B_{\nu}(m_{\nu,j}^{(0)})$. Second, we estimate with \eqref{eq:set_J},
\[
 |\Im m_{\nu,j}^{(0)}|=\log\frac{4\pi j}{\sqrt{h}}\lesssim\log h,
\]
and taking also~\eqref{eq:restr_ell} into account, we get
\[
 \frac{|\Im m_{\nu,j}^{(0)}|}{\nu}\lesssim\frac{\log h}{h^{\alpha(h)+1/2}}\lesssim 1.
\]
As a result, for $m\in B_{\nu}(m_{\nu,j}^{(0)})$, we deduce that 
\[
 \frac{|\Im m|}{\Re m}\leq\frac{1+|\Im(m_{\nu,j}^{(0)})|/\nu}{-1+\Re(m_{\nu,j}^{(0)})/\nu}\lesssim g(h),
\]
which implies that $|m|<2\Re m$ for all $h$ sufficiently large. 
Thus $m\in \mathcal M_\nu$ and $B_{\nu}(m_{\nu,j}^{(0)})\subset\mathcal{M}_{\nu}$.
		
The rest of the proof is analogous to the one of \cite[Lem.~10]{bogli2025lieb}, with \eqref{eq:unif_lim_inproof} used instead of \cite[Eq.~(49)]{bogli2025lieb}.
	\end{proof}
	
	\begin{remark}\label{rem:B2_in_M}
We may always suppose that $h_{*}$ is large enough so that for all $h\geq h_{*}$ we have
\begin{equation}\label{eq:B2_in_M}
B_{2}(m_{\nu,j}^{(1)})\subset B_{\nu}(m_{\nu,j}^{(0)})\subset\mathcal{M}_{\nu}
\end{equation}
and
\begin{equation}\label{eq:B2_disjoint}
B_{2}(m_{\nu,j_1}^{(1)})\cap B_{2}(m_{\nu,j_2}^{(1)})=\emptyset
\end{equation}
for any $j,j_1,j_2\in\mathcal{J}_{h,\ell}$, $j_1\neq j_2$, and $\ell\in\mathcal{L}_h$; c.f.\ \cite[Eq.~(51), (52)]{bogli2025lieb}.
	\end{remark}

The next result is the analogue of \cite[Lem.~11]{bogli2025lieb} for the index sets $\mathcal L_h$, $\mathcal J_{h,\ell}$ in \eqref{eq:restr_ell}, \eqref{eq:set_J}.
	
	\begin{lemma}\label{lem:sup_re-im}
Let $\nu=\ell+\frac{d}{2}-1$.  Then there exist constants $C>0$, $\tilde{C}>0$ and $h_{*}\geq 1$ such that for all $h\geq h_{*}$ the following formulas hold:
		\begin{equation}\label{eq:sup_re_lim}
			\sup\left\{\left|\frac{\Re m}{2\pi j}-1\right| : |m-m_{\nu,j}^{(1)}|\leq2, \ell\in\mathcal{L}_{h}, j\in\mathcal{J}_{h,\ell}\right\}\leq Cg(h),
		\end{equation}
		and
		\begin{equation}\label{eq:sup_im_lim}
			\begin{aligned}
				\sup\left\{\left|\frac{\Im m}{\log\left(\sqrt{h}/(4\pi j)\right)}-1\right| : |m-m_{\nu,j}^{(1)}|\leq2, \ell\in\mathcal{L}_{h}, j\in\mathcal{J}_{h,\ell} \right\}
				\leq \tilde{C}\;\frac{1}{|\log g(h)|}.
			\end{aligned}
		\end{equation}
	\end{lemma}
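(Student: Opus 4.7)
The plan is to follow the strategy of \cite[Lem.~11]{bogli2025lieb}, adapting the uniform estimates to the enlarged index sets $\mathcal{L}_h$ and $\mathcal{J}_{h,\ell}$ determined by $g(h)$. The starting point is the defining equation
\[
\theta_\nu(m_{\nu,j}^{(1)}) = \frac{\pi}{4} + 2\pi j + \ii\log\frac{\sqrt{h}}{4\pi j},
\]
combined with the quantitative asymptotic expansion $\theta_\nu(z) = z - \nu\pi/2 - \pi/4 + \bigO{\nu/|z|}$ valid uniformly on $\mathcal{M}_\nu$, which is the Bessel-function estimate imported from \cite{bogli2025lieb}. Solving for $m_{\nu,j}^{(1)}$ improves Lemma~\ref{lem:aux_zeros}(ii) to
\[
|m_{\nu,j}^{(1)} - m_{\nu,j}^{(0)}|\lesssim \frac{\nu}{|m_{\nu,j}^{(1)}|}\lesssim \frac{\ell}{j}\leq g(h),
\]
so in particular $|m_{\nu,j}^{(1)} - m_{\nu,j}^{(0)}|\leq C$ uniformly in $\ell\in\mathcal{L}_h$ and $j\in\mathcal{J}_{h,\ell}$. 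Consequently, for any $m\in B_2(m_{\nu,j}^{(1)})$ the triangle inequality gives $|m-m_{\nu,j}^{(0)}|\leq 2 + C$ uniformly in the same indices.

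For the real-part bound \eqref{eq:sup_re_lim}, I would write
\[
|\Re m - 2\pi j|\leq |m-m_{\nu,j}^{(0)}| + |\Re m_{\nu,j}^{(0)} - 2\pi j|\leq (2+C) + \frac{\nu\pi}{2} + \frac{\pi}{2}\lesssim \nu,
\]
then divide by $2\pi j$ and invoke the uniform bound $\nu/j\lesssim \ell/j\leq g(h)$ coming from the definition \eqref{eq:set_J} of $\mathcal{J}_{h,\ell}$.

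For the imaginary-part bound \eqref{eq:sup_im_lim}, the same triangle inequality gives $|\Im m - \log(\sqrt{h}/(4\pi j))|\leq |m - m_{\nu,j}^{(0)}|\leq 2+C$. The main obstacle is a uniform lower bound on $|\log(\sqrt{h}/(4\pi j))|$ over the widened index set, to replace the dependence on $\log^{-q}\ell$ used in \cite{bogli2025lieb}. Using $j\geq \ell/g(h)\geq h^{\alpha(h)+1/2}/g(h)$ from \eqref{eq:restr_ell}, \eqref{eq:set_J} together with $h^{-\alpha(h)}\to 0$ and $g(h)\to 0$, one has $\sqrt{h}/(4\pi j)<1$ for $h$ large, and hence
\[
\left|\log\frac{\sqrt{h}}{4\pi j}\right|=\log\frac{4\pi j}{\sqrt{h}}\geq \alpha(h)\log h + |\log g(h)| - \bigO{1}\gtrsim |\log g(h)|.
\]
Dividing the earlier numerator bound by this lower bound yields \eqref{eq:sup_im_lim} with $\tilde{C}$ independent of $\ell,j$ and (for $h\geq h_*$) of $h$.
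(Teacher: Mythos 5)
Your proposal is correct and follows essentially the same strategy as the paper's proof: reduce to estimating the deviation of $m$ from the explicitly known $m_{\nu,j}^{(0)}$, using the uniform asymptotics of $\theta_\nu$ imported from the earlier work, and divide through by $2\pi j$ or $\log(\sqrt h/(4\pi j))$ using the bounds $\nu/j\lesssim g(h)$ and $|\log(\sqrt h/(4\pi j))|\gtrsim|\log g(h)|$ that the enlarged index sets supply. The only cosmetic difference is organizational: the paper first strips off the $|m-m_{\nu,j}^{(1)}|\leq 2$ perturbation via the triangle inequality (contributing $\lesssim g(h)$ and $\lesssim 1/|\log g(h)|$ respectively) and then cites the intermediate estimates of \cite[Eq.~(57),(58),(38)]{bogli2025lieb} for $m_{\nu,j}^{(1)}$, while you pass directly through $m_{\nu,j}^{(0)}$ by re-deriving $|m_{\nu,j}^{(1)}-m_{\nu,j}^{(0)}|\lesssim\nu/|m_{\nu,j}^{(1)}|\lesssim g(h)$ from the $\theta_\nu$ expansion. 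Both give the stated bounds with the same constants in the relevant regime, so there is no gap.
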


	\begin{proof}
 From~\eqref{eq:restr_ell}, \eqref{eq:set_J}, one may notice that for $h\to\infty$
 \[
  	\frac{1}{j}\leq g(h)\to0 \quad\mbox{ and }\quad
  	\left| \frac{1}{\log\frac{\sqrt{h}}{4\pi j}} \right|=\frac{1}{\log\frac{4\pi j}{\sqrt{h}}}\lesssim 
  	\frac{1}{\log h^{\alpha(h)}+|\log g(h)|}
  	\to0,
 \]
 where the hidden constant is uniform in $j\in\mathcal{J}_{h, \ell}$ and $\ell\in\mathcal{L}_h$, therefore, bearing the triangle inequality in mind, it is sufficient to prove the uniform estimates
\[
			\sup\left\{\left|\frac{\Re m_{\nu,j}^{(1)}}{2\pi j}-1\right| : \ell\in\mathcal{L}_{h}, j\in\mathcal{J}_{h,\ell}\right\}\lesssim g(h),
		\]
and
		\[
			\sup\left\{\left|\frac{\Im m_{\nu,j}^{(1)}}{\log\left(\sqrt{h}/(4\pi j)\right)}-1\right| : \ell\in\mathcal{L}_{h}, j\in\mathcal{J}_{h,\ell} \right\}\lesssim g^{2}(h).
		\]
To this end, we proceed analogously as in the proof of  \cite[Lem.~11]{bogli2025lieb}.
Indeed, the first estimate follows from \cite[Eq.~(57)]{bogli2025lieb} and using that, by \eqref{eq:Nudefinition} and \eqref{eq:set_J},
		\[
			\frac{\nu}{j}\lesssim\frac{\ell}{j}\leq g(h).
		\]
To prove the second estimate, we  use \cite[Eq.~(58), (38)]{bogli2025lieb} to obtain
		\[
			\left|\frac{\log\left(\sqrt{h}/(4\pi j)\right)}{\im m_{\nu,j}^{(1)}}-1\right|
			\leq 4A^{2}\frac{\nu^{2}}{|m_{\nu,j}^{(1)}|^{2}}
			\leq 4A^{2}\frac{\nu^{2}}{(2\pi j)^{2}}\cdot\frac{(2\pi j)^{2}}{(\Re m_{\nu,j}^{(1)})^{2}}.
		\]
We estimate the right-hand side by using \eqref{eq:set_J} and \eqref{eq:sup_re_lim} to obtain an upper bound of order $g^{2}(h)$. Since $g^{2}(h)\lesssim 1/|\log g(h)|$ as $h\to\infty$, this proves the claim.
	\end{proof}

	\begin{remark}\label{rem:fourth-quadrant}
		One can infer from Lemma \ref{lem:sup_re-im} that if $h\geq h_{*}$, then the closure of the ball $B_{2}(m_{\nu,j}^{(1)})$ lies entirely in the fourth quadrant of the complex plane ($\Re m>0$ and $\im m<0$) for all $\ell\in\mathcal{L}_{h}$ and $j\in\mathcal{J}_{h,\ell}$.
	\end{remark}

Next we employ the error function in \cite[Eq.~(73)]{bogli2025lieb},
	\[
		\xi_{\nu}(m):=\left[\tan^{2}\theta_{\nu}(m)+\left(\frac{J_{\nu}'(m)H_{\nu}^{(1)}(k)}{J_{\nu}(m)(H_{\nu}^{(1)})'(k)}\right)^{\!2}\,\right]\cos^{2}\theta_{\nu}(m),
	\]
	where $k=\sqrt{\ii h+m^{2}}$. Here the principal branch of the square root is assumed.
The next result is the analogue of \cite[Lem.~16]{bogli2025lieb} for the index sets $\mathcal L_h$, $\mathcal J_{h,\ell}$ in \eqref{eq:restr_ell}, \eqref{eq:set_J}.

\begin{lemma}\label{lem:xi_prop}
Let $\nu=\ell+\frac{d}{2}-1$. Then there exists $h_{*}\geq 1$ such that for all $h\geq h_{*}$, $\ell\in\mathcal{L}_h$, and $j\in\mathcal{J}_{h, \ell}$, the function $\xi_{\nu}$ is analytic in $B_{2}(m_{\nu,j}^{(1)})$ and there is a~constant $C>0$ independent of $j$, $\ell$, and $m$ such that 
	\begin{equation}\label{eq:xi_small}
		|\xi_{\nu}(m)|\leq C(h^{\gamma-1/2}+g^{2}(h))
	\end{equation}
 for any $m\in B_{2}(m_{\nu,j}^{(1)})$.
\end{lemma}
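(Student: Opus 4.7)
The proof will closely follow that of \cite[Lem.~16]{bogli2025lieb}, with two systematic substitutions: the parameter $\log^{-q}\ell$ appearing in the earlier argument is replaced by $g(h)$, and the fixed exponent $\alpha$ is replaced by the slowly varying $\alpha(h)$. The analyticity in $B_2(m_{\nu,j}^{(1)})$ of both $\theta_\nu$ and the Bessel ratio follows from Remark~\ref{rem:B2_in_M} (which places the ball inside $\mathcal{M}_\nu$ where $\theta_\nu$ and $J_\nu$ are analytic and $J_\nu$ is non-vanishing), together with the observation that, thanks to Lemma~\ref{lem:sup_re-im} and Remark~\ref{rem:fourth-quadrant}, $k=\sqrt{\ii h+m^2}$ stays in the upper half-plane with $|k|$ of order $\sqrt h$, hence bounded away from the (real) zeros of $(H^{(1)}_\nu)'$.

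For the quantitative bound, the plan is to split
\[
\xi_\nu(m) = \sin^2\theta_\nu(m) + \left(\frac{J'_\nu(m)H^{(1)}_\nu(k)}{J_\nu(m)(H^{(1)}_\nu)'(k)}\right)^{\!2}\cos^2\theta_\nu(m),
\]
and estimate each summand separately. The first summand is controlled by using that $m_{\nu,j}^{(1)}$ is a zero of $f_{\nu,j}$ combined with derivative bounds on $\theta_\nu$ in $\mathcal{M}_\nu$ from \cite[Sect.~3.2]{bogli2025lieb}; after inserting the large-argument asymptotic form of $\theta_\nu$, the contribution is $\bigO{h^{\gamma-1/2}}$, where the exponent is produced by the upper bound $j\leq h^{\gamma+1/2}$ in $\mathcal{J}_{h,\ell}$ together with $|k|\sim\sqrt h$. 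The second summand is controlled by inserting the uniform large-argument asymptotic expansion of the Bessel and Hankel quotients valid for $|m|\gg\nu$; the leading error is quadratic in $\nu/|m|$, and Lemma~\ref{lem:sup_re-im} combined with the estimate $\nu/\Re m\lesssim g(h)$ established in the proof of Lemma~\ref{lem:aux_zeros} yields a contribution of $\bigO{g^2(h)}$.

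The main obstacle is to keep all error estimates uniform across the new, $h$-dependent index sets $\mathcal{L}_h$ and $\mathcal{J}_{h,\ell}$. In particular, the original proof in \cite{bogli2025lieb} used the stronger uniform bound $\nu/\Re m\lesssim\log^{-q}\ell$, whereas here only the weaker $\nu/\Re m\lesssim g(h)$ is available; one must therefore verify term by term that the $g^2(h)$ contribution arises only from squaring this ratio in the leading Bessel-asymptotic error, and that no intermediate cross-term generates a factor of order merely $g(h)$. Once this bookkeeping is completed, the estimate $|\xi_\nu(m)|\leq C(h^{\gamma-1/2}+g^2(h))$ follows for all $m\in B_2(m_{\nu,j}^{(1)})$, uniformly in $\ell\in\mathcal{L}_h$ and $j\in\mathcal{J}_{h,\ell}$.
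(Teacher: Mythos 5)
Your decomposition
\[
\xi_\nu(m) = \sin^2\theta_\nu(m) + \left(\frac{J'_\nu(m)H^{(1)}_\nu(k)}{J_\nu(m)(H^{(1)}_\nu)'(k)}\right)^{\!2}\cos^2\theta_\nu(m)
\]
into two summands that you estimate \emph{separately} does not work, and this is a genuine gap. Neither summand is individually small. At a zero of $f_{\nu,j}$ one has $\theta_\nu \approx \frac{\pi}{4}+2\pi j + \ii \log\frac{\sqrt h}{4\pi j}$, and since $4\pi j > \sqrt h$ for $j\in\mathcal J_{h,\ell}$ the imaginary part of $\theta_\nu$ is negative and of size up to $\mathcal O(\log h)$; this forces $|\sin^2\theta_\nu(m)|$ and $|\cos^2\theta_\nu(m)|$ to be of order $\big(\frac{4\pi j}{\sqrt h}\big)^2 \asymp j^2/h$, which can be as large as $h^{2\gamma}$ and hence \emph{diverges} as $h\to\infty$. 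The Bessel--Hankel ratio in the second summand is not small either: the large-argument asymptotics give $J'_\nu/J_\nu\approx -\tan\theta_\nu$ and $H^{(1)}_\nu/(H^{(1)}_\nu)'\approx -\ii$, so the squared ratio is $\approx -\tan^2\theta_\nu$, a quantity of modulus roughly $1$. Consequently the second summand is $\approx -\sin^2\theta_\nu(m)$, which again has size $\asymp j^2/h$. The two large contributions \emph{cancel to leading order}, and the whole point of the paper's argument is to keep the bracketed factor $\tan^2\theta_\nu + (\cdots)^2$ together, extract the cancellation, and only then multiply by the (large) $\cos^2\theta_\nu$. This is how one reaches
\[
\xi_\nu(m)=\bigO{\frac{j^2}{mh}}+\bigO{\frac{j^2\nu^2}{m^4}},
\]
with the explicit $j^2$ prefactor coming from $|\cos^2\theta_\nu|$, after which $\frac{j^2}{|m|h}\lesssim \frac{j}{h}\leq h^{\gamma-1/2}$ and $\frac{j^2\nu^2}{|m|^4}\lesssim \frac{\nu^2}{|m|^2}\lesssim g^2(h)$.

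Your plan to bound $\sin^2\theta_\nu$ by $\bigO{h^{\gamma-1/2}}$ using ``derivative bounds on $\theta_\nu$'' is not salvageable: there is no way for that summand to be small, because its modulus genuinely grows with $j$ on the index set considered. The same remark applies to the second summand. You correctly identify the two substitutions ($\log^{-q}\ell\rightsquigarrow g(h)$, $\alpha\rightsquigarrow\alpha(h)$) and the required uniformity across $\mathcal L_h$ and $\mathcal J_{h,\ell}$, and the analyticity part of your argument is on the right track (Remark~\ref{rem:B2_in_M} plus the location of $k$). But the quantitative heart of the lemma must go through the cancellation in the bracketed factor, not through a term-by-term bound; before that you also need the analogues of \cite[Lem.~14, 15]{bogli2025lieb} for the new index sets (in particular the bound $\Im\theta_\nu(m)\le -\alpha(h)\log h$ and the estimate~\eqref{eq:quotientlog}), which your proposal does not mention.
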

	
\begin{proof}	
First we note that \cite[Lem.~14, 15]{bogli2025lieb} continue to hold for the new index sets $\mathcal L_h$, $\mathcal J_{h,\ell}$ in \eqref{eq:restr_ell}, \eqref{eq:set_J}, with $\alpha$ replaced by $\alpha(h)$ everywhere;
in particular
\begin{equation}\label{eq:Imtheta}
\Im \theta_\nu(m)\leq -\alpha(h) \log h.
\end{equation} 
In the proofs of both results, we use  \eqref{eq:unif_lim_inproof} instead of \cite[Eq.~(49)]{bogli2025lieb}.
The proof of the analogue of \cite[Lem.~15]{bogli2025lieb} needs an updated version of the two-sided estimate of $\log\frac{\sqrt h}{4\pi j}$; indeed, the upper bound needs to be replaced by
\begin{equation}\label{eq:quotientlog}
\begin{aligned}
\log\frac{\sqrt h}{4\pi j}&=\frac 1 2 \log h-\log 4\pi-\log j\leq \frac 1 2 \log h-\log\ell+\log g(h)\\
&\leq \log(h^{-\alpha(h)})+\log g(h)\leq \log(h^{-\alpha(h)})=-\alpha(h)\log h,
\end{aligned}
\end{equation}
where we have used  \eqref{eq:set_J}, \eqref{eq:restr_ell} and $g(h)\leq 1$.

Now we proceed as in the proof of \cite[Lem.~16]{bogli2025lieb} to arrive at
\[
 \xi_{\nu}(m)=\bigO{\frac{j^{2}}{mh}}+\bigO{\frac{j^{2}\nu^{2}}{m^{4}}},
\]
where the involved constants in the Landau symbols $\mathcal{O}$ are uniform in $\ell\in\mathcal{L}_h$ and $j\in\mathcal{J}_{h, \ell}$.
The first error term can be estimated, with~\eqref{eq:set_J} and~\eqref{eq:sup_re_lim}, as
\[
\frac{j^{2}}{|m|h}\lesssim\frac{j}{h}\leq h^{\gamma-1/2};
\]
note that $\gamma<1/2$. The second error term uses
\[
\frac{j^{2}\nu^{2}}{|m|^{4}}\lesssim\frac{\nu^{2}}{|m|^{2}}\lesssim g^2(h),
\]
where where we have used~\eqref{eq:unif_lim_inproof}. This completes the proof of \eqref{eq:xi_small}.
	\end{proof}

Next we move towards proving existence of solutions of the characteristic equation~\eqref{eq:CharacteristicSign}. To this end, we will need the following result which is the analogue of \cite[Lem.~17]{bogli2025lieb} for the index sets $\mathcal L_h$, $\mathcal J_{h,\ell}$ in \eqref{eq:restr_ell}, \eqref{eq:set_J}.

\begin{lemma}\label{lem:aux_eq}
Let $\nu=\ell+\frac{d}{2}-1$. Then there exists $h_{*}\geq 1$ such that for all $h\geq h_{*}$, $\ell\in\mathcal{L}_h$, and $j\in\mathcal{J}_{h, \ell}$, the following claims hold:
\begin{enumerate}[(i)]
\item The function
\[
 \operatorname{err}_{\nu,j}(m):=-1+\frac{m}{4\pi j}\frac{\e^{\ii\theta_{\nu}(m)}}{\cos\theta_{\nu}(m)}\sqrt{1-\xi_{\nu}(m)}
\]
is analytic in $B_{2}(m_{\nu,j}^{(1)})$ and there is a~constant $C>0$ independent of $j$, $\ell$ and $m$ such that 
 \begin{equation}\label{eq:err_small}
  |\operatorname{err}_{\nu,j}(m)|\leq C (h^{-2\alpha(h)}+g(h))
 \end{equation}
 for any $m\in B_{2}(m_{\nu,j}^{(1)})$.
\item If $m\in B_{2}(m_{\nu,j}^{(1)})$ satisfies
\begin{equation}
 \ii\left(\theta_{\nu}(m)-\frac{\pi}{4}-2\pi j\right)=\log\frac{4\pi j}{\sqrt{h}}+\log\left(1+\operatorname{err}_{\nu,j}(m)\right),
\label{eq:tow_char_eq}
\end{equation}
then $m$ is a solution of the characteristic equation~\eqref{eq:CharacteristicSign} with the corresponding $k=k(m)=\sqrt{\ii h+m^{2}}$.
\end{enumerate}
\end{lemma}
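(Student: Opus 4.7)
The plan is to handle the two parts of Lemma~\ref{lem:aux_eq} separately, using the analytical setup and the estimates on $\theta_\nu$ and $\xi_\nu$ already established.

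For part (i), I would first verify analyticity by checking each factor. By Remark~\ref{rem:B2_in_M}, $B_{2}(m_{\nu,j}^{(1)})\subset \mathcal{M}_\nu$, on which $\theta_\nu$ is analytic. By the estimate \eqref{eq:Imtheta}, for every $m\in B_{2}(m_{\nu,j}^{(1)})$,
\[
\lvert e^{-2\ii\theta_\nu(m)}\rvert = e^{2\Im\theta_\nu(m)} \leq h^{-2\alpha(h)},
\]
which is strictly less than $1$ once $h$ is large enough; hence $\cos\theta_\nu(m)\neq 0$ and
\[
\frac{e^{\ii\theta_\nu(m)}}{\cos\theta_\nu(m)} \;=\; \frac{2}{1+e^{-2\ii\theta_\nu(m)}}\;=\;2+\bigO{h^{-2\alpha(h)}}
\]
uniformly in $m,j,\ell$. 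By Lemma~\ref{lem:xi_prop}, $\xi_\nu$ is analytic in $B_{2}(m_{\nu,j}^{(1)})$ with $\lvert\xi_\nu(m)\rvert<1$ for $h$ large, so the principal branch $\sqrt{1-\xi_\nu(m)}=1+\bigO{\lvert\xi_\nu(m)\rvert}=1+\bigO{h^{\gamma-1/2}+g^{2}(h)}$ is analytic too. Finally, Lemma~\ref{lem:sup_re-im} and the observation that $\lvert\Im m\rvert/(4\pi j)\ll g(h)$ give $m/(4\pi j)=1/2+\bigO{g(h)}$. Multiplying the three factors and subtracting $1$ produces the estimate \eqref{eq:err_small}, after absorbing the $h^{\gamma-1/2}$ and $g^{2}(h)$ contributions into $g(h)+h^{-2\alpha(h)}$ via \eqref{eq:gbound}.

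For part (ii), the plan is to reverse the derivation of $\operatorname{err}_{\nu,j}$ to recover \eqref{eq:CharacteristicSign}. Exponentiating both sides of \eqref{eq:tow_char_eq} and using $e^{-2\pi\ii j}=1$ gives
\[
e^{\ii\theta_\nu(m)-\ii\pi/4}\;=\;\frac{4\pi j}{\sqrt{h}}\bigl(1+\operatorname{err}_{\nu,j}(m)\bigr)\;=\;\frac{m}{\sqrt{h}}\,\frac{e^{\ii\theta_\nu(m)}}{\cos\theta_\nu(m)}\,\sqrt{1-\xi_\nu(m)},
\]
where the definition of $\operatorname{err}_{\nu,j}$ is applied in the second equality. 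Dividing by $e^{\ii\theta_\nu(m)}$ and squaring yields
\[
\cos^{2}\theta_\nu(m)\;=\;\frac{\ii m^{2}}{h}\bigl(1-\xi_\nu(m)\bigr).
\]
Writing $S_\nu(m):=\bigl(J'_\nu(m)H^{(1)}_\nu(k)/[J_\nu(m)(H^{(1)}_\nu)'(k)]\bigr)^{2}$ and using $\sin^{2}\theta_\nu+\cos^{2}\theta_\nu=1$, we obtain from the definition of $\xi_\nu$ that
\[
1-\xi_\nu(m)\;=\;\cos^{2}\theta_\nu(m)\bigl(1-S_\nu(m)\bigr),
\]
and therefore, after dividing by $\cos^{2}\theta_\nu(m)\neq 0$, $1=(\ii m^{2}/h)(1-S_\nu(m))$, i.e.\ $S_\nu(m)=1-h/(\ii m^{2})$. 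Since $h=-\ii(k^{2}-m^{2})$ gives $h/(\ii m^{2})=1-k^{2}/m^{2}$, we deduce $S_\nu(m)=k^{2}/m^{2}$. Extracting the square root reproduces the characteristic equation \eqref{eq:CharacteristicSign} up to a sign.

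The \emph{main obstacle} is pinning down the correct sign in this last step: both $\pm k/m$ and $\pm J'_\nu(m)H^{(1)}_\nu(k)/[J_\nu(m)(H^{(1)}_\nu)'(k)]$ solve the squared equation. The resolution is to track the principal branches throughout: the factor $e^{\ii\pi/4}$ in \eqref{eq:tow_char_eq} and the principal branch of $\sqrt{1-\xi_\nu(m)}$ fix $\cos\theta_\nu(m)=e^{\ii\pi/4}(m/\sqrt{h})\sqrt{1-\xi_\nu(m)}$ (with the principal branch of $\sqrt{h}$), and combining this with the factorisation $1-\xi_\nu(m)=\cos^{2}\theta_\nu(m)(1-S_\nu(m))$ together with Remark~\ref{rem:fourth-quadrant} (which locates $m$ in the fourth quadrant) selects the sign that matches \eqref{eq:CharacteristicSign}, exactly as in the proof of \cite[Lem.~17]{bogli2025lieb}. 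Once this sign issue is settled, the conclusion is immediate.
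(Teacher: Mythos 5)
Your proposal is correct and takes essentially the same route as the paper's proof. For part (i), the paper likewise writes $1+\operatorname{err}_{\nu,j}(m)$ as a product of three factors and estimates each using \eqref{eq:Imtheta}, Lemma~\ref{lem:xi_prop}, Lemma~\ref{lem:sup_re-im} (together with the logarithm estimate~\eqref{eq:quotientjh}) — your factorisation $(m/4\pi j)\cdot(\e^{\ii\theta_\nu}/\cos\theta_\nu)\cdot\sqrt{1-\xi_\nu}$ is the same as the paper's $(m/2\pi j)\cdot(1+\e^{-2\ii\theta_\nu})^{-1}\cdot\sqrt{1-\xi_\nu}$ since $\e^{\ii\theta}/\cos\theta=2/(1+\e^{-2\ii\theta})$; the absorption of $h^{\gamma-1/2}$ comes from $\alpha(h)\to 0$ (so $h^{\gamma-1/2}\leq h^{-2\alpha(h)}$ eventually) rather than from \eqref{eq:gbound}, a negligible attribution slip. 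For part (ii), your algebra (exponentiation, the identity $1-\xi_\nu=\cos^2\theta_\nu(1-S_\nu)$, and $S_\nu=k^2/m^2$) is correct, and you rightly flag the branch/sign issue as the delicate point; the paper simply defers the entirety of part (ii) and the analyticity to \cite[Lem.~17]{bogli2025lieb}, so your more explicit derivation plus a deferral of the sign step is consistent with, and slightly more informative than, the paper's own argument.
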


\begin{proof}
First we prove \eqref{eq:err_small}; then the remaining claims follow in exactly the same way as in the proof of \cite[Lem.~17]{bogli2025lieb}.
We estimate the three factors in
\begin{equation}
  \operatorname{err}_{\nu,j}(m)=-1+\frac{m}{2\pi j}\frac{1}{1+\e^{-2\ii\theta_{\nu}(m)}}\sqrt{1-\xi_{\nu}(m)}.
\label{eq:err_expre_inproof}
\end{equation}
First, with the aid of~\eqref{eq:xi_small}, we deduce that 
\[
\sqrt{1-\xi_{\nu}(m)}=1+\bigO{h^{\gamma-1/2}+g^2(h)}.
\]
Second, it follows from~\eqref{eq:Imtheta} that
\[
 	\frac{1}{1+\e^{-2\ii\theta_{\nu}(m)}}=\frac{1}{1+\bigO{h^{-2\alpha(h)}}}=1+\bigO{h^{-2\alpha(h)}}.
\]
Third, we use
\[
		\left|\frac{m}{2\pi j}-1\right|\leq\left|\frac{\Re m}{2\pi j}-1\right|+\left|\frac{\im m}{\log\left(\sqrt{h}/(4\pi j)\right)}\cdot\frac{\log\left(\sqrt{h}/(4\pi j)\right)}{2\pi j}\right|.
	\]
Note that \eqref{eq:quotientlog}, \eqref{eq:set_J}, \eqref{eq:restr_ell} yield
\begin{equation}\label{eq:quotientjh}
	\left|\frac{\log\left(\sqrt{h}/(4\pi j)\right)}{2\pi j}\right|
	= \frac{\log\left(4\pi j/\sqrt{h}\right)}{2\pi j} 
	\leq \frac{\log(4\pi h^{\gamma}) g(h)}{2\pi h^{\alpha(h)+1/2}} 	 
	\lesssim g(h)h^{\beta-\gamma};
\end{equation}
where in the last estimate we have used $0<\gamma-\beta<1/2$.
	Therefore, together with Lemma~\ref{lem:sup_re-im}, we obtain
	\[
 		\frac{m}{2\pi j}=1+\bigO{g(h)}.
	\]
Inserting the estimates into~\eqref{eq:err_expre_inproof}, also bearing~\eqref{eq:gbound} in mind, amounts to the uniform asymptotic formula~\eqref{eq:err_small}.
\end{proof}	

	
	Now, everything is in place for
	\begin{proposition}\label{prop:evls_ineq}
		Suppose that $d\geq2$ and let  $\nu=\ell+\frac{d}{2}-1$. Then there exists $h_{*}\geq 1$ such that for all $h\geq h_{*}$, all  $\ell\in\mathcal{L}_{h}$ and all $j\in\mathcal{J}_{h,\ell}$, the following claims are valid:
		\begin{enumerate}[(i)]
			\item There is a unique solution $m_{\nu,j}$ of the characteristic equation \eqref{eq:CharacteristicSign} inside $B_{2}(m_{\nu,j}^{(1)})$, with $m_{\nu,j_1}\neq m_{\nu,j_2}$ for two indices
$j_1,j_2\in \mathcal J_{h,\ell}$, $j_1\neq j_2$.
			\item The number
			\[
				\lambda_{\ell,j}:=\ii h+m_{\nu,j}^{2}
			\]
			is an eigenvalue of $H_{V_{h}}$ of algebraic multiplicity $m(\lambda_{\ell,j})$ at least as in ~\eqref{eq:eigenspace}.
			\item The real and imaginary parts of the eigenvalue satisfy
			\begin{equation}\label{eq:lam_ineq}
				\frac{h}{2}\leq\Im\lambda_{\ell,j}\leq h 
				\quad\mbox{ and }\quad
				(\pi j)^{2}\leq|\lambda_{\ell,j}|\leq (4\pi j)^{2}.  
			\end{equation}
		\end{enumerate}
	\end{proposition}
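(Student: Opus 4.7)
The plan is to derive (i) and (ii) from a Rouch\'e argument applied to the reformulated characteristic equation in Lemma~\ref{lem:aux_eq}, and then to read off (iii) directly from the asymptotics of Lemma~\ref{lem:sup_re-im}.

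By Lemma~\ref{lem:aux_zeros}, $f_{\nu,j}$ has a unique simple zero $m_{\nu,j}^{(1)}$ inside $B_\nu(m_{\nu,j}^{(0)})$, and by Remark~\ref{rem:B2_in_M} the smaller ball $B_2(m_{\nu,j}^{(1)})$ lies inside $\mathcal M_\nu$, where both $\theta_\nu$ and $\operatorname{err}_{\nu,j}$ are analytic. On the boundary $\partial B_2(m_{\nu,j}^{(1)})$, Taylor's formula together with $f_{\nu,j}'=\theta_\nu'$ and the quantitative control of $\theta_\nu'$ on $\mathcal M_\nu$ (inherited from \cite{bogli2025lieb} and adapted as in Lemma~\ref{lem:sup_re-im}) should yield a lower bound $|f_{\nu,j}(m)|\geq c$ for some constant $c>0$ uniform in $\ell\in\mathcal L_h$ and $j\in\mathcal J_{h,\ell}$. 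In contrast, Lemma~\ref{lem:aux_eq}(i) gives $|\ii\log(1+\operatorname{err}_{\nu,j}(m))|\lesssim h^{-2\alpha(h)}+g(h)\to 0$. Hence for $h\geq h_*$ large enough, Rouch\'e's theorem applied to $f_{\nu,j}$ and $f_{\nu,j}+\ii\log(1+\operatorname{err}_{\nu,j})$ on $B_2(m_{\nu,j}^{(1)})$ produces a unique solution $m_{\nu,j}$ of \eqref{eq:tow_char_eq} inside the ball, while the disjointness \eqref{eq:B2_disjoint} forces $m_{\nu,j_1}\neq m_{\nu,j_2}$ whenever $j_1\neq j_2$; this settles (i). For (ii), Lemma~\ref{lem:aux_eq}(ii) converts $m_{\nu,j}$ into a solution of the characteristic equation \eqref{eq:CharacteristicSign} with $k=\sqrt{\ii h+m_{\nu,j}^2}$, and the radial ansatz from Section~\ref{Preliminaries} produces a corresponding eigenfunction of $H_{V_h}$ with eigenvalue $\lambda_{\ell,j}=\ii h+m_{\nu,j}^2$; the multiplicity lower bound is the dimension \eqref{eq:eigenspace} of the space of spherical harmonics of degree $\ell$, each giving a linearly independent eigenfunction.

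For (iii), write $m_{\nu,j}=a+\ii b$. By Remark~\ref{rem:fourth-quadrant} and Lemma~\ref{lem:sup_re-im}, $a=2\pi j(1+O(g(h)))>0$ and $b=\log(\sqrt h/(4\pi j))(1+O(1/|\log g(h)|))<0$, so $\Im\lambda_{\ell,j}=h+2ab$ with $2ab<0$. Using $j\leq h^{\gamma+1/2}$ and $\gamma<1/2$, one gets $|2ab|\lesssim j\log(4\pi j/\sqrt h)\lesssim h^{\gamma+1/2}\log h\ll h$, hence $h/2\leq\Im\lambda_{\ell,j}\leq h$ for $h\geq h_*$. Using in addition $j\geq h^{\alpha(h)+1/2}/g(h)\gg h^{1/2}$, we have $b^2\ll a^2\approx(2\pi j)^2$, so $\Re\lambda_{\ell,j}=a^2-b^2=(2\pi j)^2(1+o(1))$; a straightforward estimate then yields $|\lambda_{\ell,j}|^2\leq(2\pi j)^4(1+o(1))+h^2\leq(4\pi j)^4$ and $|\lambda_{\ell,j}|\geq|\Re\lambda_{\ell,j}|\geq(\pi j)^2$ once $h$ is large enough.

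The main obstacle is securing the uniform (in $\ell,j$) lower bound on $|f_{\nu,j}|$ on $\partial B_2(m_{\nu,j}^{(1)})$ that survives the $h$-dependence of the index sets \eqref{eq:restr_ell}, \eqref{eq:set_J}; this requires quantitative control of $\theta_\nu'$ on $\mathcal M_\nu$ analogous to, but slightly stronger than, what was used in \cite{bogli2025lieb}. Once this is in place, the remaining steps reduce to bookkeeping with the asymptotics already established in Lemmas~\ref{lem:sup_re-im} and~\ref{lem:aux_eq}.
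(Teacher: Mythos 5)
Your overall strategy tracks the paper's proof closely: Rouch\'e's theorem applied to the reformulated equation for (i), the radial ansatz for (ii), and the asymptotics from Lemma~\ref{lem:sup_re-im} for (iii). For (i) the paper simply defers to the analogue of \cite[Prop.~18, Rem.~19]{bogli2025lieb}, using \eqref{eq:unif_lim_inproof} in place of the corresponding bound there, so your identification of the ``main obstacle'' --- a uniform lower bound for $|f_{\nu,j}|$ on $\partial B_2(m_{\nu,j}^{(1)})$ --- is exactly the ingredient inherited from that reference; you are right that it survives the $h$-dependent index sets precisely because Lemmas~\ref{lem:aux_zeros}--\ref{lem:aux_eq} have been rederived for them. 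Your derivation of (iii) is essentially identical to the paper's (the paper invokes the numerical range bound from \cite[Lem.~2]{bogli2025lieb} for $\Im\lambda_{\ell,j}\leq h$, but your direct observation that $2ab<0$ gives the same conclusion).

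However, your proof of claim (ii) has a real gap: you pass from ``$m_{\nu,j}$ solves the characteristic equation'' to ``$\lambda_{\ell,j}$ is an eigenvalue'' as if this were automatic. It is not. The radial ansatz in Section~\ref{Preliminaries} requires \emph{two} branch conditions, $\Re m>0$ and $\Im k=\Im\sqrt{\ii h+m^2}>0$; the second is what makes the Hankel-function tail $r^{1-d/2}H^{(1)}_\nu(kr)$ decay, so that $\psi\in L^2((0,\infty),r^{d-1}\,\dd r)$. The first is supplied by Remark~\ref{rem:fourth-quadrant}, but the second amounts to checking $\Im(\ii h+m_{\nu,j}^2)=h+2\Re m_{\nu,j}\Im m_{\nu,j}>0$, which requires showing $|\Re m_{\nu,j}\Im m_{\nu,j}|\ll h$ uniformly over $\ell\in\mathcal L_h,\ j\in\mathcal J_{h,\ell}$. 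This verification is the \emph{entire} content of the paper's proof of (ii) (using Lemma~\ref{lem:sup_re-im}, \eqref{eq:set_J} and $\gamma<1/2$ to get $|\Re m_{\nu,j}\Im m_{\nu,j}|\lesssim h^{\gamma+1/2}\log h\ll h$). You do carry out exactly this computation, but only inside your argument for (iii), as if it were needed merely for the size estimate on $\Im\lambda_{\ell,j}$. As written, (ii) is therefore asserted rather than proved. The fix is simple --- move the estimate $|2ab|\ll h$ before (ii) and record that it yields $\Im(\ii h+m_{\nu,j}^2)>0$ --- but without it, the claim that the ansatz ``produces a corresponding eigenfunction'' is unjustified.
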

	
	\begin{proof}
		
		\emph{Proof of the claim $(i)$:} 
		This is the analogue of \cite[Prop.~18, Rem.~19]{bogli2025lieb} and is proved analogously, using \eqref{eq:unif_lim_inproof} instead of \cite[Eq.~(49)]{bogli2025lieb}. 
%
		
		\emph{Proof of the claim $(ii)$:} According to the eigenvalue construction in the beginning of the  Preliminaries, we choose only solutions $m_{\nu,j}$ amongst those found in the claim $(i)$ which satisfy the restrictions
		\[
			\Re m_{\nu,j}>0\qquad\text{and}\qquad\im\sqrt{\ii h+m_{\nu,j}^{2}}>0.
		\]
		By Remark \ref{rem:fourth-quadrant}, the first restriction is satisfied for all zeros $m_{\nu,j}$. Now, we consider the second restriction. Since we use the principal branch of the square root, $\im\sqrt{\ii h+m_{\nu,j}^{2}}>0$ if
		\[
			\im(\ii h+m_{\nu,j}^{2})=h+2\Re m_{\nu,j}\im m_{\nu,j}>0.
		\]
		Consequently, we restrict ourselves to solutions $m_{\nu,j}$ in the claim $(i)$ for which the latter condition is satisfied. It follows from \eqref{eq:sup_re_lim} and \eqref{eq:sup_im_lim} that there exist $C>0$ and $h_{0}\geq 1$ such that for all $h\geq h_{0}, \ell\in\mathcal{L}_{h}$ and $j\in\mathcal{J}_{h,\ell}$,
		\[
			|\Re m_{\nu,j}\im m_{\nu,j}|\leq Cj\log\frac{4\pi j}{\sqrt{h}}\lesssim h^{\gamma+1/2}\log h\ll h,
		\]
		where we have used \eqref{eq:set_J} and the fact that $\gamma<1/2$. Hence, there exists $h_{1}\geq h_{0}$ such that for all $h\geq h_{1}$, all $\ell\in\mathcal{L}_{h}$ and all $j\in\mathcal{J}_{h,\ell}$,
		\[
			\im(\ii h+m_{\nu,j}^{2})=h+2\Re m_{\nu,j}\im m_{\nu,j}\geq h-2|\Re m_{\nu,j}\im m_{\nu,j}|>0;
		\]
		hence  the zeros $m_{\nu,j}$ give rise to eigenvalues $\lambda_{\ell,j}$ of $H_{V_{h}}$ of the form $\lambda_{\ell,j}:=\ii h+m_{\nu,j}^{2}$.
		
		\emph{Proof of the claim $(iii)$:} First we prove the two-sided bound of the imaginary parts of the  eigenvalues. For all $h\geq h_{1}, \ell\in\mathcal{L}_{h}$ and $j\in\mathcal{J}_{h,\ell}$,
		\[
			\im\lambda_{\ell,j}=h+2\Re m_{\nu,j}\im m_{\nu,j}.
		\]
		It follows from the discussion in $(ii)$ that $h$ is the leading order term of $\im\lambda_{\ell,j}$ as $h\to\infty$. Therefore, one can always find $h_{2}\geq h_{1}$ such that for all $h\geq h_{2}$,
		\[
			|\im\lambda_{\ell,j}|\geq\frac{h}{2}.
		\]
		Now, combining this with \cite[Lem.~2]{bogli2025lieb} proves the first restriction of \eqref{eq:lam_ineq}.
		
		Next, we prove the two-sided bound of $|\lambda_{\ell,j}|$. To show $|\lambda_{\ell,j}|\leq(4\pi j)^{2}$, we proceed analogously as in the proof of  \cite[Prop.~20~(ii)]{bogli2025lieb}. It remains to prove $(\pi j)^{2}\leq|\lambda_{\ell,j}|$. With the aid of Lemma \ref{lem:sup_re-im} and 
\eqref{eq:quotientjh} one sees that for all sufficiently large $h$,
		\[
			\left|\frac{\im m}{\Re m}\right|=\frac{|\im m|}{\log\left(4\pi j/\sqrt{h}\right)}\cdot
			\frac{2\pi j}{\Re m}\cdot\frac{\log\left(4\pi j/\sqrt{h}\right)}{2\pi j}
			=\mathcal{O}\left(g(h)h^{\beta-\gamma}\right)
		\]
		for all $m\in B_2(m_{\nu,j}^{(1)})$. Thus $|\im m/\Re m|^{2}<1/2$ for these $m$, so in particular for $m=m_{\nu,j}$. 
		This implies
		\[
			\frac{(\pi j)^{2}}{|\lambda_{\ell,j}|}\leq\frac{(\pi j)^{2}}{|\Re\lambda_{\ell,j}|}=\frac{(\pi j)^{2}}{(\Re m_{\nu,j})^{2}-(\im m_{\nu,j})^{2}}\leq\frac{2(\pi j)^{2}}{(\Re m_{\nu,j})^{2}}.
		\]
		We apply \eqref{eq:sup_re_lim} to the last inequality, which implies that there exists $h_{*}\geq h_{2}$ such that for all $h\geq h_{*}, \ell\in\mathcal{L}_{h}$ and $j\in\mathcal{J}_{h,\ell}$ we get $(2\pi j)^{2}/(\Re m_{\nu,j})^{2}\leq 2$. This completes the proof.
	\end{proof}

\subsection{Proofs of main results}\label{sect:proofsSchr}

As in the previous section, we take constants $0<\beta<\gamma<1/2$ and let $\alpha(h)\in (0,\beta)$ for all $h>0$, with $\alpha(h)\to 0$ so slowly that $h^{-\alpha(h)}\to 0$ as $h\to\infty$. 
We also want to incorporate the constant $0<\varepsilon<1$ that is given in both Theorems \ref{thm: ratio for decreasing func} and \ref{thm: ratio for increasing func}. To this end, we set $\beta=\varepsilon/2$. Then we fix $\gamma$ and restrict $\alpha(h)$ such that
$$0<\alpha(h)<\beta=\frac \varepsilon 2<\gamma<\frac 1 2.$$

Now, again as in the previous section, let us take an arbitrary non-increasing function $g:(0,\infty)\to(0,1]$ such that $g(h)\to 0$ as $h\to\infty$ with, as in~\eqref{eq:gbound},
\[
	g(h)\geq 2h^{\beta-\gamma}
\]
%
for all $h>1$; note that $\beta-\gamma<0$.
Below we will impose further bounds on the decay rates of $g(h)$ and  $h^{-\alpha(h)}$, separately for Theorems \ref{thm: ratio for decreasing func} and \ref{thm: ratio for increasing func}.

We use the  index sets $\mathcal L_h$, $\mathcal J_{h,\ell}$ in \eqref{eq:restr_ell}, \eqref{eq:set_J}. In the proofs below we will need to change the order of the sums over $j$ and $\ell$, hence we also introduce the index sets, for $h>0$,
		\begin{equation}\label{eq: new set J}
			\mathcal{\tilde{J}}_{h}:=\left\{j\in\N : \frac{8h^{\alpha(h)+1/2}}{g(h)}\leq j\leq \frac{h^{\beta+1/2}}{g(h)}\right\},
		\end{equation}
		and, for $h>0$ and $j\in \mathcal{\tilde{J}}_{h}$,
		\begin{equation}\label{eq: new set ell}
			\mathcal{\tilde{L}}_{h,j}:=\left\{\ell\in\N : h^{\alpha(h)+1/2}\leq\ell\leq jg(h)\right\}.
		\end{equation}
		Then, using \eqref{eq:gbound}, it follows that
\begin{equation}\label{eq:indexsets}
\Big\{(j,\ell):\,j\in \mathcal{\tilde{J}}_{h}, \ell\in \mathcal{\tilde{L}}_{h,j} \Big\}
		\subset \Big\{(j,\ell):\,\ell\in\mathcal L_h, j\in\mathcal J_{h,\ell}\Big\}.
\end{equation}
	
	
	Recall that $V_{h}(x)=\ii h\chi_{B_{1}(0)}(x)$ for $x\in\R^{d}$. 
	We have
\begin{equation}\label{eq:VLp}
	\|V_h\|^p_{L^p}=\int_{\mathbb{R}^d}|V_h(x)|^p\,\dd x=\mu_{d}\,h^{p},
\end{equation}
where $\mu_{d}:=\pi^{d/2}/\Gamma(1+d/2)$ is the volume of the unit ball $B_1(0)$ in $\R^{d}$.

	\begin{proof}[Proof of Theorem \ref{thm: ratio for decreasing func}]
Take an arbitrary function $w:[0,\infty)\to [1,\infty)$ with $w(h)\to\infty$ as $h\to\infty$.
Let $c=2^{13}\pi^{2}>0$. 
 We claim that we can choose $g(h)$ and $h^{-\alpha(h)}$ so that
		\begin{equation}\label{eq: assum of g}
		\frac{1}{w(h)} \leq	g^{d-1}(h)\leq 1\qquad\text{and}\qquad g^{d-1}(h)\log\left(\frac{ch^{2\alpha(h)}}{g^{2}(h)}\right)\leq w(h)
		\end{equation}
		for $h>1$.
Indeed, once we impose the restrictions \eqref{eq:gbound} and $\frac{1}{w(h)} \leq g^{d-1}(h)\leq 1$, we see that $g^2(h)\exp(w(h))\to \infty$ as $h\to \infty$. If we choose $h^{-\alpha(h)}$ to decay so slowly that $$(h^{-\alpha(h)})^2 \geq \frac{c}{g^2(h) \exp(w(h))},$$ then also the bound on the right-hand side of \eqref{eq: assum of g} is satisfied.

		 By Proposition \ref{prop:evls_ineq}, there exists $h_{*}\geq 1$ such that for all $h\geq h_{*}$, all $\ell\in\mathcal{L}_{h}$ and all $j\in\mathcal{J}_{h,\ell}$,
		$\lambda_{\ell,j}=\ii h+m_{\nu,j}^{2}$
		is an eigenvalue of the Schr\"{o}dinger operator $H_{V_{h}}$ and, according to \eqref{eq:eigenspace}, an easy calculation \cite[Lem.~21]{bogli2025lieb} shows that its algebraic multiplicity $m(\lambda_{\ell,j})$ satisfies
		\begin{equation}\label{eq: lower bound for multiplicity}
			m(\lambda_{\ell,j})\geq\frac{\ell^{d-2}}{(d-2)!}.
		\end{equation}

		
		Let $h\geq h_{*}$ and let $f:[0,\infty)\to(0,\infty)$ be a continuous, non-increasing function such that
		\[
			\int_{0}^{\infty} f(t)\;\dd t=\infty.
		\]
		Since $\sigma_{\dd}(H_{V_{h}})\subset[0,\infty)+\ii[0,h]$ and with \eqref{eq:VLp} and \eqref{eq:indexsets},
		\begin{align*}
			{\rm Ratio}(V_{h},f)&=
			\frac{1}{\mu_d}\;h^{-p}\sum_{\lambda\in\sigma_\dd(H_{V_{h}})}\dfrac{(\im\lambda)^{p}}{|\lambda|^{d/2}}
			f\left(\log\left(\frac{|\lambda|}{\im\lambda}\right)\right)\\
			&\geq
			\frac{1}{\mu_d}\;h^{-p}\sum_{\ell\in\mathcal{L}_{h}}\sum_{j\in\mathcal{J}_{h,\ell}}
			m(\lambda_{\ell,j})\frac{(\im\lambda_{\ell,j})^{p}}{|\lambda_{\ell,j}|^{d/2}}
			f\left(\log\left(\frac{|\lambda_{\ell,j}|}{\im\lambda_{\ell,j}}\right)\right)\\
			&\geq
			\frac{1}{\mu_d} \;h^{-p}\sum_{j\in\mathcal{\tilde{J}}_{h}}\sum_{\ell\in\mathcal{\tilde{L}}_{h,j}}
			m(\lambda_{\ell,j})\frac{(\im\lambda_{\ell,j})^{p}}{|\lambda_{\ell,j}|^{d/2}}
			f\left(\log\left(\frac{|\lambda_{\ell,j}|}{\im\lambda_{\ell,j}}\right)\right).
		\end{align*}
		Now, we apply \eqref{eq:lam_ineq},\eqref{eq: lower bound for multiplicity}, along with the fact that $f$ is non-increasing, to the last inequality and hence obtain
		\[
			{\rm Ratio}(V_{h},f)\gtrsim
			\sum_{j\in\mathcal{\tilde{J}}_{h}}\sum_{\ell\in\mathcal{\tilde{L}}_{h,j}}
			\frac{\ell^{d-2}}{j^{d}} f\left(\log\left(\frac{2^{5}\pi^{2}j^{2}}{h}\right)\right)
			=\sum_{j\in\mathcal{\tilde{J}}_{h}}\frac{1}{j^{d}}f\left(\log\left(\frac{2^{5}\pi^{2}j^{2}}{h}\right)\right)
			\sum_{\ell\in\mathcal{\tilde{L}}_{h,j}}\ell^{d-2},
		\]
		where the hidden constant depends on $p$ and $d$ only.
		
		Next, we determine a lower bound of $\sum_{\ell\in\mathcal{\tilde{L}}_{h,j}}\ell^{d-2}$. Observe that for any continuous, monotonic function $k:[1,\infty)\to(0,\infty)$,
		\begin{equation}\label{eq: monotone estimate}
			\sum_{x\in\N,\;u\leq x\leq v} k(x)\;\dd x
			\geq\int_{\lceil u\rceil}^{\lfloor v\rfloor} k(x)\;\dd x
			\geq\int_{u+1}^{v-1} k(x)\;\dd x
			\geq\int_{2u}^{v/2} k(x)\;\dd x
		\end{equation}		
		for $u\geq 1$ and $v\geq 2$. Thus, by \eqref{eq: new set ell},
		\[
			\sum_{\ell\in\mathcal{\tilde{L}}_{h,j}}\ell^{d-2}\geq\int_{2h^{\alpha(h)+1/2}}^{jg(h)/2} \ell^{d-2}\;\dd\ell
			=\frac{1}{d-1}\left[\left(\frac{jg(h)}{2}\right)^{d-1}-\left(2h^{\alpha(h)+1/2}\right)^{d-1}\right].
		\]
		Applying the lower bound of $j$ from \eqref{eq: new set J} to the last equality yields
		\[
			\sum_{\ell\in\mathcal{\tilde{L}}_{h,j}}\ell^{d-2}\gtrsim (jg(h))^{d-1};
		\]
		here the non-displayed constant depends on the dimension $d$ only. This implies that
		\begin{equation}\label{eq:double sum j,ell}
			\sum_{j\in\mathcal{\tilde{J}}_{h}}\frac{1}{j^{d}}f\left(\log\left(\frac{2^{5}\pi^{2}j^{2}}{h}\right)\right)
			\sum_{\ell\in\mathcal{\tilde{L}}_{h,j}}\ell^{d-2}
			\gtrsim g^{d-1}(h)\sum_{j\in\mathcal{\tilde{J}}_{h}}\frac{1}{j}f\left(\log\left(\frac{2^{5}\pi^{2}j^{2}}{h}\right)\right).
		\end{equation}
		
		To find a lower bound of the remaining sum over $j\in\mathcal{\tilde{J}}_{h}$, we use the formula \eqref{eq: monotone estimate} one more time, and arrive at
		\begin{align*}
			\sum_{j\in\mathcal{\tilde{J}}_{h}}\frac{1}{j}f\left(\log\left(\frac{2^{5}\pi^{2}j^{2}}{h}\right)\right)
			&\geq
			\int_{2^{4}h^{\alpha(h)+1/2}/g(h)}^{h^{\beta+1/2}/(2g(h))} \frac{1}{j}f\left(\log\left(\frac{2^{5}\pi^{2}j^{2}}{h}\right)\right)\;\dd j\\
			&=
			\int_{\log(ch^{2\alpha(h)}/g^{2}(h))}^{\log(2^{3}\pi^{2}h^{2\beta}/g^{2}(h))} \frac{f(s)}{2} \;\dd s,
		\end{align*}
		where we have made the substitution $s=\log(2^{5}\pi^{2}j^{2}/h)$ and used $c=2^{13}\pi^{2}$. Then
		\[
			\int_{\log(ch^{2\alpha(h)}/g^{2}(h))}^{\log(2^{3}\pi^{2}h^{2\beta}/g^{2}(h))} f(s)\;\dd s
			\geq\int_{\log(ch^{2\alpha(h)}/g^{2}(h))}^{\varepsilon\log h} f(s)\;\dd s;
		\]
		here we have used $\log(2^{3}\pi^{2}h^{2\beta}/g^{2}(h))\geq\log h^{2\beta}$ and $\beta=\varepsilon/2$. Now, recalling the definition $F(x)=\int_{0}^{x} f(t)\;\dd t$, the last lower bound can be rewritten as
		\[
			\int_{\log(ch^{2\alpha(h)}/g^{2}(h))}^{\varepsilon\log h} f(s)\;\dd s=
			F(\varepsilon\log h)-F\left(\log\left(\frac{ch^{2\alpha(h)}}{g^{2}(h)}\right)\right).
		\]
		Since $f$ is non-increasing and positive, we have $F(x)\leq f(0) x$, hence
		\[
			F(\varepsilon\log h)-F\left(\log\left(\frac{ch^{2\alpha(h)}}{g^{2}(h)}\right)\right)
			\geq F(\varepsilon\log h)-f(0)\log\left(\frac{ch^{2\alpha(h)}}{g^{2}(h)}\right).
		\]
		Combining this with \eqref{eq:double sum j,ell} and \eqref{eq: assum of g} implies 
		\[
			{\rm Ratio}(V_{h},f)\gtrsim \frac{F(\varepsilon\log h)}{w(h)}-f(0)w(h);
		\]
		this yields \eqref{eq:Sup(V,f) lower bd decreasing} and completes the proof.
	\end{proof}

	In the following we prove that the appearance of the function $w$ can be removed.
	
	\begin{proof}[Proof of Corollary~\ref{cor:ratio for decreasing func no w}]
		Let $f:[0,\infty)\to(0,\infty)$ be a continuous, non-increasing function with $\int_{0}^{\infty} f(t)\;\dd t=\infty$. Suppose to the contrary that for every $C>0$ and every $h_{*}\geq 1$ there exists $h\geq h_{*}$ such that
		\[
			{\rm Ratio}(V_{h},f)< CF(\varepsilon\log h).
		\]
		
		Let us fix an arbitrary function $w_{0}:[0,\infty)\to [1,\infty)$ with $w_{0}(h)\to\infty$  
		and $w_0^2(h)\ll F(\varepsilon \log h)$
		as $h\to\infty$.
		In view of Theorem~\ref{thm: ratio for decreasing func}, applied with the function $w_0$, and also using~Remark \ref{rem: ratio for decreasing func}, there exists $h_{0}\geq 1$ such that for all $h\geq h_{0}$ one has 
		${\rm Ratio}(V_{h},f)>0$. 
		For $h\geq h_{0}$ we define
		\[
			a_{h}:=\frac{F(\varepsilon\log h)}{{\rm Ratio}(V_{h},f)}>0.
		\]
		Due to the above hypothesis, one can construct stricly increasing sequences $\{h_{n}\}_{n\in\N}$ and $\{a_{h_{n}}\}_{n\in\N}$ such that $a_{h_{1}}>1$,
		\[
			\lim_{n\to\infty}a_{h_{n}}=\infty\quad\text{and}\quad\lim_{n\to\infty} h_{n}=\infty.
		\]
		
		Take an arbitrary non-decreasing function $u:[0,\infty)\to[1,\infty)$ such that $u(h_{n})=a_{h_{n}}$ for all $n\in\N$. Then $u(h)\to\infty$ as $h\to\infty$. At this point, let us choose an arbitrary function $w:[0,\infty)\to [1,\infty)$ with $w(h)\to\infty$ as $h\to\infty$ sufficiently slowly so that
		\[
			w^{2}(h)\ll \min\left\{u(h), F(\varepsilon \log h)\right\},\quad h\to\infty.
		\]
		By Theorem~\ref{thm: ratio for decreasing func} and also~Remark \ref{rem: ratio for decreasing func}, there exists a constant $C_{p,d}>0$ such that for all $h$ sufficiently large one has
		\[
			C_{p,d}\leq w(h)\frac{{\rm Ratio}(V_{h},f)}{F(\varepsilon\log h)}=\frac{w(h)}{a_h}.
		\]
		In particular, one can always find $n_{*}\in\N$ such that for all integers $n\geq n_{*}$
		\[
			C_{p,d}\leq \frac{w(h_n)}{a_{h_n}} \leq \frac{u^{1/2}(h_n)}{a_{h_n}}
			= a_{h_{n}}^{-1/2}.
		\]
		Notice that $a_{h_{n}}^{-1/2}\to 0$ as $n\to\infty$, therefore it yields a contradiction and proves~\eqref{eq:no w}.
	\end{proof}
	
	Now, we prove the divergence rate for non-decreasing functions $f$.
	
	\begin{proof}[Proof of Theorem \ref{thm: ratio for increasing func}]
	Take an arbitrary function $w:[0,\infty)\to [1,\infty)$ with $w(h)\to\infty$ as $h\to\infty$.
	Let $c=2^{8}\pi^{2}>0$.
	Instead of \eqref{eq: assum of g}, we now claim that we can choose $g(h)$ and $h^{-\alpha(h)}$ so that
		\begin{equation}\label{eq:new assum of g}
		\textrm{$h^{-\beta}\ll g^{2}(h)$ as $h\to\infty$};\quad
		\frac{1}{w(h)} \leq	g^{d-1}(h)\leq 1; \quad\text{and} \quad \frac{ch^{2\alpha(h)}}{g^{2}(h)}\leq h^{\beta}
		\end{equation}
		for $h>1$.
 Indeed, once we impose the restrictions \eqref{eq:gbound}, $g^{2}(h)h^{\beta}\to\infty$ as $h\to\infty$ and $\frac{1}{w(h)} \leq	g^{d-1}(h)\leq 1$ for $h>1$, 
		we can choose $h^{-\alpha(h)}$ to decay so slowly that
		$$(h^{-\alpha(h)})^2 \geq \frac{c h^{-\beta}}{g^2(h)}.$$
	
Let  $x_{0}\geq 1$. 
In view of Proposition \ref{prop:evls_ineq}, there exists $h_{*}\geq 1$ such that for all $h\geq h_{*}$, $\ell\in\mathcal{L}_{h}$ and $j\in\mathcal{J}_{h,\ell}$,
		$\lambda_{\ell,j}=\ii h+m_{\nu,j}^{2}$
		is an eigenvalue of $H_{V_{h}}$ with algebraic multiplicity $m(\lambda_{\ell,j})$ satisfying \eqref{eq: lower bound for multiplicity}. Possibly after increasing $h_{*}$, we can assume that $g(h_{*})\leq \pi/x_{0}$. 
		Note that since $g$ is non-increasing, it follows that  $g(h)\leq \pi/x_{0}$ for all $h\geq h_{*}$. Hence for all  $t\geq h^{1/2}/g(h)$,
		\[
			\frac{\pi^{2}t^{2}}{h}\geq x_{0}^{2}.
		\]
		Let $h\geq h_{*}$ and assume that $f:[0,\infty)\to (0,\infty)$ is a continuous, non-decreasing function such that $f(\log t^{2})/t$ is monotonic for $t\geq x_{0}$. Then, by the above observation,
		\[
			t\mapsto\frac{1}{t}f\left(\log\left(\frac{\pi^{2}t^{2}}{h}\right)\right)
		\]
		is also monotonic for all $t\geq h^{1/2}/g(h)$.
	
	We proceed analogously as in the proof of Theorem \ref{thm: ratio for decreasing func} and only point out the differences.	
		Since here we are dealing with a non-decreasing function $f$, instead of an upper bound as in the proof of Theorem \ref{thm: ratio for decreasing func}, we use a lower bound of $|\lambda_{\ell,j}|/\im\lambda_{\ell,j}$ which follows from \eqref{eq:lam_ineq}, namely
		$$\frac{|\lambda_{\ell,j}|}{\im\lambda_{\ell,j}}\geq \frac{\pi^2 j^2}{h}.$$
		 Then, analogously, we arrive at
		\[
			{\rm Ratio}(V_{h},f)\gtrsim
			g^{d-1}(h)\sum_{j\in\mathcal{\tilde{J}}_{h}}\frac{1}{j}f\left(\log\left(\frac{\pi^{2}j^{2}}{h}\right)\right)
			\geq\frac{1}{w(h)}\sum_{j\in\mathcal{\tilde{J}}_{h}}\frac{1}{j}f\left(\log\left(\frac{\pi^{2}j^{2}}{h}\right)\right),
		\]
		where in the last step we have used  the first restriction of \eqref{eq:new assum of g}. It is straightforward to see that \eqref{eq: monotone estimate} can be applied to the remaining sum, therefore, with $c=2^8\pi^2$ and $\beta=\varepsilon/2$,
		\begin{align*}
			\sum_{j\in\mathcal{\tilde{J}}_{h}}\frac{1}{j}f\left(\log\left(\frac{\pi^{2}j^{2}}{h}\right)\right)
			&\geq\int_{\log\left(ch^{2\alpha(h)}/g^{2}(h)\right)}^{\log\left(\pi^{2}h^{2\beta}/(2^{2}g^{2}(h))\right)} \frac{f(s)}{2} \;\dd s\\
			&\geq\frac{1}{2}\left[F(\varepsilon\log h)-F\left(\log\left(\frac{ch^{2\alpha(h)}}{g^{2}(h)}\right)\right)\right]\\
			&\gtrsim F(\varepsilon\log h)-F\left(\frac{\varepsilon}{2}\log h\right),
		\end{align*}
		where we have used the last restriction in \eqref{eq:new assum of g} to get the last lower bound. So, this gives rise to the first inequality of \eqref{eq:Sup(V,f) lower bd increasing}. To argue the second estimate of \eqref{eq:Sup(V,f) lower bd increasing}, let us begin with rewriting
		\[
			F(\varepsilon\log h)-F\left(\frac{\varepsilon}{2}\log h\right)=\int_{(\varepsilon\log h)/2}^{\varepsilon\log h} f(s)\;\dd s.
		\]
		Since $f$ is non-decreasing and positive, we can bound the integral on the right-hand side from below by $(f(0)\varepsilon\log h)/2$ and the proof is complete.
	\end{proof}

	\begin{proof}[Proof of Corollary~\ref{cor:ratio for increasing func no w}]
		The proof primarily relies on the proof idea in Corollary~\ref{cor:ratio for decreasing func no w} presented above, therefore we point out the differences only.
		
		We begin with assuming that the statement is not true and then replace $F(\varepsilon\log h)$ in the proof of Corollary~\ref{cor:ratio for decreasing func no w} by
		\[
			F(\varepsilon\log h)-F\left(\frac{\varepsilon}{2}\log h\right)
		\]
		everywhere except in the construction of the function $w$
		where we only need to require
		\[
			w^{2}(h)\ll u(h),\quad h\to\infty.
		\]
		To demonstrate that there must be a contradiction, it remains to apply Theorem~\ref{thm: ratio for increasing func} instead of Theorem~\ref{thm: ratio for decreasing func} and thus we obtain  the claim.
	\end{proof}
	
	\begin{proof}[Proof of Corollary \ref{cor:exponential growth}]
		Let $\xi>0$ be given. 
		Then for $x>0$
		\[
			F(x)=\int_{0}^{x} e^{\xi t}\;\dd t=\frac{1}{\xi}({\e}^{\xi x}-1),
		\]
		which implies that for $h>0$,
		\[
			F(\varepsilon\log h)-F\left(\frac{\varepsilon}{2}\log h\right)=\frac{h^{\varepsilon\xi}}{\xi}(1-h^{-\varepsilon\xi/2}).
		\]
		Hence, \eqref{eq:Sup(V,f) lower bd for exponential} is obtained by inserting $f(t)={\e}^{\xi t}$ into the first inequality of \eqref{eq:Sup(V,f) lower bd increasing}. Now, it remains to verify that this function meets the requirement of Theorem \ref{thm: ratio for increasing func}. Indeed, for $t\geq 1=:x_{0}$
		\[
			t\mapsto\frac{f(\log t^{2})}{t}=t^{2\xi-1}
		\]
		is monotonic for all $\xi>0$, and now Theorem \ref{thm: ratio for increasing func} implies the claim.
	\end{proof}

Next, in the case of $\int_0^\infty f(x)\dd x=\infty$,  we prove the existence of a potential $V\in L^p(\R^d)$ such that ${\rm Ratio}(V,f)=\infty$.
\begin{proof}[Proof of Theorem~\ref{thm:sum}]
The construction of the potential will rely on the following modification of~\cite[Lem.~2]{bogli2017schrodinger}.
This modification allows for simultaneous approximation of (finitely many) eigenvalues of both $H_{U_1}$ and $H_{U_2}$, in contrast to~\cite[Lem.~2]{bogli2017schrodinger} where only the eigenvalues of one of them could be approximated.
\\
\textbf{Claim 1:} Let $U_1,U_2\in L^p(\R^d)\cap L^\infty(\R^d)$ be decaying at infinity. 
Consider two finite collections of discrete eigenvalues $\lambda_{1,j}\in\sigma_{\dd}(H_{U_1})$, $j=1,\dots,j_1$ ($j_1<\infty$), and $\lambda_{2,j}\in\sigma_{\dd}(H_{U_2})$, $j=1,\dots,j_2$ ($j_2<\infty$).
 Let $x_0\in\R^d\backslash\{0\}$. Then for every $0<\delta<1$ there exist $t_{\delta}>0$ and $r_{\delta}\in(1-\delta,1+\delta)$ 
 such that for all $t\geq t_\delta$ there exist 
 $$\mu_{n,j}(t)\in\sigma_{\dd}(-\Delta+U_1+r_{\delta}^2U_2(r_\delta(\cdot-t x_0)))$$ 
 with $|\mu_{n,j}(t)-\lambda_{n,j}|<\delta$ for $j=1,\dots,j_n$ and $n=1,2$.\\
\textit{Proof of Claim 1.} First note that a scaling argument yields that $r^2\lambda\in\sigma_d(-\Delta+r^2U_2(r\cdot))$ if and only if $\lambda\in\sigma_d(H_{U_2})$.
We want to find a scaling factor $r\in (1-\delta,1+\delta)$ such that 
\begin{equation}\label{eq:avoid}
\begin{aligned}
&\lambda_{1,j}\notin\sigma_{\dd}(-\Delta+r^2U_2(r\cdot)), \quad j=1,\dots, j_1,\\
&r^2\lambda_{2,j}\notin\sigma_{\dd}(-\Delta+U_1), \quad j=1,\dots, j_2. 
\end{aligned}
\end{equation}
Since there are at most finitely many eigenvalues of $\sigma_{\dd}(-\Delta+U_1)$ (resp.\ $\sigma_{\dd}(-\Delta+r^2U_2(r\cdot))$) in a sufficiently small neighbourhood of the unperturbed eigenvalues, which need to be avoided, we can always find a scaling factor $r_\delta:=r$ such that \eqref{eq:avoid} holds. In fact, $|r_{\delta}-1|$ can be arbitrarily small, and we choose it so small that $|r_\delta^2\lambda_{2,j}-\lambda_{2,j}|<\delta/2$ for all $j=1,\dots,j_2$.
Now, to prove Claim 1, we apply~\cite[Lem.~2]{bogli2017schrodinger} to the potentials $U_1$ and $r_{\delta}^2U_2(r_\delta\cdot)$. By applying~\cite[Lem.~2]{bogli2017schrodinger} a second time, with exchanged roles of the potentials, and using
$$\sigma_{\dd}(-\Delta+U_1+r_{\delta}^2U_2(r_\delta(\cdot-tx_0))
=\sigma_{\dd}(-\Delta+U_1(\cdot+tx_0)+r_{\delta}^2U_2(r_\delta\cdot)),$$
 we can approximate the family of  eigenvalues $\lambda_{n,j}$, $j=1,\dots,j_n$ for both $n=1,2$. Note that since there are only finitely many eigenvalues, the parameter $t_\delta$ can be chosen uniformly for all of them. This proves  Claim 1.

Now  we use induction to arrive at the following result which will be used to prove Theorem~\ref{thm:sum}.
This result is similar to \cite[Thm.~1]{bogli2017schrodinger} but in constrast to the latter result, here we only work with the $L^p$ norm, and we do not require that $\sum_{n=1}^\infty \|Q_n\|_{L^\infty}<\infty$.\\
\textbf{Claim 2:} Let $Q_n\in L^\infty(\R^d)$, $n\in\N$, be a family of compactly supported potentials with
$\sum_{n=1}^\infty\|Q_n\|_{L^p}^p<\infty$. 
Given a collection of discrete eigenvalues
$\lambda_{n,j}\in\sigma_{\dd}(H_{Q_n})$, $j=1,\dots,j_n\; (j_n\in\N)$ for $n\in\N$, and given precisions 
$0<\delta_n<1$, $n\in\N$, with $\delta_n<|\im\lambda_{n,j}|$ for all $j=1,\dots,j_n$ and $n\in\N$, we can construct a potential
$$V(x):=\sum_{n=1}^\infty r_n^2Q_n(r_n(x-x_n))$$
with shifts $x_n\in \R^d$ and scaling factors $r_n\in (1-\delta_n,1+\delta_n)$ 
such that the operator $H_V$ has (countably many) discrete eigenvalues $\mu_{n,j}$ with $|\mu_{n,j}-\lambda_{n,j}|<\delta_n$ for all $j=1,\dots, j_n$ and $n\in\N$. The shifts and scaling factors can be chosen such that the $r_n^2Q_n(r_n(\cdot-x_n))$ have disjoint supports and 
$$\int_{\R^d}|V(x)|^p\,\dd x\leq 2\sum_{n=1}^\infty \int_{\R^d}|Q_n(x)|^p\,\dd x<\infty.$$
\textit{Proof of Claim 2.} We follow the idea in the proof of \cite[Thm.~1]{bogli2017schrodinger} and construct the potential $V$ inductively.
  The potential is the limit $N\to\infty$ of 
  $$\tilde V_N(x)=\sum_{n=1}^N r_n^2Q_n(r_n(x-x_n))$$
  (the tilde is used to distinguish from the potentials $V_h$ used in other proofs).
First note that since we can always replace the sequence $\{\delta_{n}\}_{n\in\N}$ by a strictly decreasing sequence $\{\delta'_n\}_{n\in\N}$ converging to $0$ with $\delta'_n\leq\delta_n$ for $n\in\N$, we can assume without loss of generality that our sequence of given precisions is strictly decreasing and converging to $0$.
For the base case $N=1$, let $\tilde V_1=Q_1$. In step $N=2$ we apply Claim 1 with $U_1=\tilde V_1=Q_1$ and $U_2=Q_2$.
In the induction step $N+1$, one applies Claim 1 with $U_1=\tilde V_N$ and $U_2=Q_{N+1}$ to create eigenvalues near $\lambda_{n,j}\in\sigma_{\dd}(H_{Q_{n}})$, $j=1,\dots,j_n$, $n=1,\dots,N+1$, up to a precision $\tilde \delta_{N+1}$ (which plays the role of $\delta$ in  Claim 1).
Note that the constants $\tilde \delta_n$ have to be chosen so small that
$\sum_{n=N}^{\infty} \tilde{\delta_n}\leq \delta_N$ for all $N\in\N$; take for example $\tilde\delta_n=\delta_n-\delta_{n+1}$ and recall that the sequence $\{\delta_n\}_{n\in\N}$ is assumed to be strictly decreasing and converging to $0$.
It is easy to see that the shifts and scaling factors can be chosen such that the potentials $r_n^2Q_n(r_n(\cdot-x_n))$ have disjoint supports and
 $$\int_{\R^d}|V(x)|^p\,\dd x=\sum_{n=1}^\infty \int_{\R^d}|r_n^2Q_n(r_n(x-x_n))|^p\,\dd x\leq 2\sum_{n=1}^\infty \int_{\R^d}|Q_n(x)|^p\,\dd x.$$
 The right-hande side is finite by the assumptions.
 This implies that 
 $$\|V-\tilde V_N\|_{L^p}^p = \sum_{n=N+1}^\infty\int_{\R^d}|r_n^2Q_n(r_n(x-x_n))|^p\,\dd x\to 0$$ as $N\to\infty$.
Now, for every $n\in\N$ and $j=1,\dots,j_n$, for $N\geq n$ there exist $\mu_{N;n,j}\in\sigma_{\dd}(-\Delta+\tilde V_N)$ with $|\mu_{N;n,j}-\lambda_{n,j}|<\delta_n$, i.e.\ the $\mu_{N;n,j}$ are in an $N$-independent disc. Note that the assumption $\delta_n<|\im\lambda_{n,j}|$ implies that this disc does not touch the essential spectrum $[0,\infty)$. Now \cite[Lem.~5.4]{han_11} and its proof implies convergence of the discrete spectrum (including preservation of multiplicities) in each disc as $N\to\infty$. This proves Claim 2.

Now we are ready to  prove Theorem~\ref{thm:sum}. We  choose the potentials $Q_n$ in such a way that $V\in L^p(\R^d)$ but ${\rm Ratio}(V,f)=\infty$.
To this end, let $n_0\in\N$. For $n<n_0$ we take $Q_n\equiv 0$, and for $n\geq n_0$ we take  $Q_n(x)=c_n^2V_n(c_nx)$ with $V_n(x)=\ii n \chi_{B_1(0)}(x)$ and constants $c_n>0$  that will be determined later on; they will also ensure that $\sum_{n=1}^\infty\|Q_n\|_{L^p}^p<\infty$. 
Note that a scaling argument yields that $c_n^2\lambda\in\sigma_d(H_{Q_n})$ if and only if $\lambda\in\sigma_d(H_{V_n})$.

Let $0<\varepsilon<1$ be given. Then Claim 2 (applied with suitably small $\delta_n>0$) and 
the bound~\eqref{eq:rate} (for $d=1$) and Corollary~\ref{cor:ratio for decreasing func no w} (for $d\geq 2$)  imply that, for $n_0>1$ sufficiently large,
	\begin{align*}
	&\sum_{\lambda\in\sigma_\dd(H_{V})}\dfrac{\text{dist}(\lambda,[0,\infty))^{p}}{|\lambda|^{d/2}}f\left(-\log\left(\frac{\text{dist}(\lambda,[0,\infty))}{|\lambda|}\right)\right)\\
	&\gtrsim
	\sum_{n=1}^\infty\;\sum_{\substack{\lambda_{n,j}\in\sigma_\dd(H_{Q_n}) \\ j=1,2,\ldots,j_{n}}}
	\dfrac{\text{dist}(\lambda_{n,j},[0,\infty))^{p}}{|\lambda_{n,j}|^{d/2}}
	f\left(-\log\left(\frac{\text{dist}(\lambda_{n,j},[0,\infty))}{|\lambda_{n,j}|}\right)\right)\\
	&=
	\sum_{n=n_0}^\infty c_n^{2p-d} \sum_{\substack{\lambda_{n,j}\in\sigma_\dd(H_{V_n}) \\ j=1,2,\ldots,j_{n}}}
	\dfrac{\text{dist}(\lambda_{n,j},[0,\infty))^{p}}{|\lambda_{n,j}|^{d/2}}f\left(-\log\left(\frac{\text{dist}(\lambda_{n,j},[0,\infty))}{|\lambda_{n,j}|}\right)\right)\\
	&\gtrsim \sum_{n=n_0}^\infty  c_n^{2p-d}\, F(\varepsilon \log n)\, \int_{\R^d}|V_n(x)|^p\,\dd x;
	\end{align*}
here we have used that the proofs of~\eqref{eq:rate} and Corollary~\ref{cor:ratio for decreasing func no w} only take finitely many discrete eigenvalues of $H_{V_n}$ into consideration.

Claim 2 and a change of variables implies that
$$\int_{\R^d}|V(x)|^p\,\dd x\leq 2\sum_{n=1}^\infty \int_{\R^d}|Q_n(x)|^p\,\dd x
=2\sum_{n=n_0}^\infty c_n^{2p-d}\int_{\R^d}|V_n(x)|^p\,\dd x.$$
By the assumptions on $p$, we have $2p-d>0$. 
Define $$c_n:=\left(\frac{k(\log n)}{n\,\int_{\R^d}|V_n(x)|^p\,\dd x}\right)^{1/(2p-d)}$$
for a continuous, non-increasing function $k:[\log n_0,\infty)\to [0,\infty)$ with $\int_{\log n_0}^\infty k(x)\,\dd x<\infty$; we will explicitly choose this function later on.
Then the above estimates imply
$${\rm Ratio}(V,f)\gtrsim \frac{\sum_{n=n_0}^\infty F(\varepsilon \log n) k(\log n) n^{-1} }{\sum_{n=n_0}^\infty k(\log n) n^{-1}}.$$
Note that $\sum_{n=n_0}^\infty k(\log n)n^{-1}$ is finite by the integral test for convergence, since
$$\int_{n_0}^\infty \frac{k(\log n)}{n}\,\dd n=\int_{\log n_0}^\infty k(x) \,\dd x<\infty.$$
Again using the integral test, in order that ${\rm Ratio}(V,f)=\infty$, it remains to choose the function $k$ such that
\[
	\int_{n_0}^\infty \frac{1}{n}F(\varepsilon \log n) k(\log n)\;\dd n=\int_{\log n_0}^\infty F(\varepsilon x) k(x)\,\dd x=\infty.
\]
Define $K(x):=(F(\varepsilon x))^{-1/2}$ and $k(x):=-K'(x)>0$ for $x\geq \log n_0>0$. Now, the assertion that $k$ is non-increasing for $x\geq\log n_{0}$ can be verified by a direct calculation
\[
	k(x)=\frac{\varepsilon}{2}(F(\varepsilon x))^{-3/2}f(\varepsilon x),
\]
where we have used that $f$ is non-increasing.

Since $\lim_{x\to \infty}K(x)=0$, we see that the assumption $\int_{\log n_0}^\infty k(x)\,\dd x<\infty$ is satisfied.
In addition, for any $x_0\geq \log n_0$, because $F$ is non-decreasing,
$$\int_{\log n_0}^\infty F(\varepsilon x) k(x)\,\dd x \geq F(\varepsilon x_0)\int_{x_0}^\infty k(x)\,\dd x=F(\varepsilon x_0) K(x_0)=(F(\varepsilon x_0))^{1/2}\to\infty$$
as $x_0\to\infty$.
Since the left-hand side is independent of $x_0$, we arrive at $\int_{\log n_0}^\infty F(\varepsilon x) k(x)\,\dd x=\infty$ which concludes the proof.
\end{proof}

Finally, we prove~\eqref{eq:LT cone exponent sharp}, i.e.\ the $\tau$-dependence in \eqref{eq:LT cone} is sharp.

	\begin{proof}[Proof of Theorem~\ref{thm:exponent outside a cone sharp}]
		We take a function with $\varphi(\tau)\ll\tau^{-p}$ as $\tau\to 0$, i.e.\ $\tau^{p}\varphi(\tau)\to 0$ as $\tau\to 0$. We set the parameters similarly as in Section~\ref{Preliminaries} with a modified condition as follows:
		\[
			0<\alpha(h)<\frac{\gamma}{2}<\frac{3\gamma}{4}<\beta<\gamma<\frac{1}{2}.
		\]
		Then one can see that $\beta-\gamma>\alpha(h)-\beta$ for $h>0$. Hence,
		with a function $g$ as in Section~\ref{Preliminaries}, in particular satisfying~\eqref{eq:gbound},
		\[
			g(h)\geq 2h^{\beta-\gamma}\gg\,2h^{\alpha(h)-\beta}
		\]
		as $h\to\infty$. Here note that we need neither~\eqref{eq: assum of g} nor~\eqref{eq:new assum of g}.
		
		Take $\tau>0$ to be $h$-dependent as $\tau(h):=h^{-2\beta}/32\pi^{2}$, which tends to $0$ as $h\to\infty$. 
		We further choose $g$ to decay so slowly that
		\begin{equation}\label{eq:gdecaytau}
			g^{d-1}(h)\gg\frac{\varphi(\tau(h))}{h^{2p\beta}}=(32\pi^{2})^{p}\tau^{p}(h)\varphi(\tau(h)),\quad h\to\infty.
		\end{equation}
		For $h\geq 1$ we consider the Schr\"{o}dinger operator $V_{h}=\ii h\chi_{B_{1}(0)}$ and define the following index sets
		\begin{align*}
			\mathcal{L}'_{h}:=\left\{\ell\in\N : h^{\alpha(h)+1/2}\leq \ell\leq \frac{g(h)}{2}\;h^{\beta+1/2} \right\} \quad\text{and}\quad
			\mathcal{J}'_{h,\ell}:=\left\{j\in\N : \frac{\ell}{g(h)}\leq j\leq h^{\beta+1/2} \right\}.
		\end{align*}
		Notice that $\emptyset\neq\mathcal{L}'_{h}\times\mathcal{J}'_{h,\ell}\subset \mathcal{L}_{h}\times\mathcal{J}_{h,\ell}$ for all $h$ sufficiently large. 
		
		In view of Proposition~\ref{prop:evls_ineq}, there exists $h_{*}\geq 1$ such that for all $h\geq h_{*}$, $\ell\in\mathcal{L}'_{h}$ and $j\in\mathcal{J}'_{h,\ell}$ the number $\lambda_{\ell,j}=\ii h+m_{\nu,j}^{2}$ is an eigenvalue of $H_{V_{h}}$ with the multiplicity $m(\lambda_{\ell,j})\gtrsim \ell^{d-2}$. Due to~\eqref{eq:lam_ineq},
		\[
			\frac{|\Im\lambda_{\ell,j}|}{\Re\lambda_{\ell,j}}\geq\frac{h}{32\pi^{2}j^{2}}\geq\frac{h^{-2\beta}}{32\pi^{2}}=\tau(h).
		\]
		Therefore,
		\begin{align*}
			\frac{1}{\varphi(\tau(h))\|V_{h}\|^{p}_{L^{p}}}\sum_{\substack{\lambda\in\sigma_{\dd}(H_{V_{h}}) \\ |\Im\lambda|\geq \tau(h)\Re\lambda}} |\lambda|^{p-d/2}
			\gtrsim\frac{1}{\varphi(\tau(h))h^{p}}
			\sum_{\ell\in\mathcal{L}'_{h}}\ell^{d-2}\sum_{j\in\mathcal{J}'_{h,\ell}} j^{2p-d}.
		\end{align*}
		We estimate with \eqref{eq: monotone estimate}
		\[
			\sum_{j\in\mathcal{J}'_{h,\ell}} j^{2p-d}\geq \int_{(\ell/g(h))+1}^{(h^{\beta+1/2})-1} j^{2p-d}\;\dd j\gtrsim\;h^{2p\beta+p-(d-1)(\beta+1/2)},
		\]
		and
		\[
			\sum_{\ell\in\mathcal{L}'_{h}}\ell^{d-2}\geq \int_{(h^{\alpha(h)+1/2})+1}^{(g(h)h^{\beta+1/2}/2)-1} \ell^{d-2}\;\dd\ell
			\gtrsim\;g^{d-1}(h)h^{(d-1)(\beta+1/2)},
		\]
		where the above hidden constants depend on $p$ and $d$ only.
		In conclusion, via~\eqref{eq:gdecaytau},
		\[
			\frac{1}{\varphi(\tau(h))\|V_{h}\|^{p}_{L^{p}}}\sum_{\substack{\lambda\in\sigma_{\dd}(H_{V_{h}}) \\ |\Im\lambda|\geq \tau(h)\Re\lambda}} |\lambda|^{p-d/2}
			\gtrsim\;g^{d-1}(h)\frac{h^{2p\beta}}{\varphi(\tau(h))}\to\infty,
		\]
		for $h\to\infty$. This proves~\eqref{eq:LT cone exponent sharp}.
	\end{proof}

		\section{Jacobi operators} \label{sect:jacobi}
		In this section we establish optimal Lieb--Thirring type inequalities for Jacobi operators.

\subsection{Main results}
The proofs of the following main results will be presented in Section~\ref{sect:proofsJac}.

The first result is an improvement of \eqref{eq:H-K bd for Jacobi}.

\begin{theorem}\label{thm:new L-T for non-self Jacobi}
	Let $p\geq 3/2$ and let $f:[0,\infty)\to(0,\infty)$ be a continuous, non-increasing function. If $\int_{0}^{\infty} f(x)\;\dd x<\infty$, then there exists a constant $C_{p,f}>0$ such that for all $v\in \ell^{p}(\Z)$
	\begin{equation}\label{eq:new L-T for non-self Jacobi}
		\sum_{\lambda\in\sigma_{\dd}(J)} 
		\frac{\dist(\lambda,[-2,2])^{p}}{|\lambda^{2}-4|^{1/2}}f\left(-\log\left(\frac{\dist(\lambda,[-2,2])}{\dist(\lambda,\{-2,2\})}\right)\right)
		\leq C_{p,f}\,\|v\|^{p}_{\ell^{p}},
	\end{equation}
	where $C_{p,f}=C_{p}\,\left(\int_{0}^{\infty} f(x)\;\dd x+f(0)\right)$ for an $f$-independent constant $C_p>0$.
\end{theorem}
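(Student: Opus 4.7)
The plan is to obtain \eqref{eq:new L-T for non-self Jacobi} from the Golinskii--Kupin diamond inequality \eqref{eq:diamond ineq by G-K} by the same averaging strategy that produced \eqref{eq:newLTineq_S} from the Schr\"odinger cone bound \eqref{eq:LT cone}, now adapted to the two-endpoint geometry of $\sigma_{\ess}(J)=[-2,2]$. I would split $\sigma_{\dd}(J)$ into the halves $\Re\lambda\geq 0$ and $\Re\lambda<0$; by the $\lambda\leftrightarrow-\lambda$ symmetry it suffices to treat the first, for which $\dist(\lambda,\{-2,2\})=|\lambda-2|$ and $|\lambda+2|\geq 2$. Integrating the one-sided half of \eqref{eq:diamond ineq by G-K},
\[
\sum_{\lambda\in\sigma_{\dd}(J)\cap\Phi_{\omega}^{+}}|\lambda-2|^{p-1/2}\leq C_{p}(1+2\tan\omega)^{p}\|v\|_{\ell^{p}}^{p},
\]
against a non-negative Borel measure $\dd\mu(\omega)$ on $(0,\pi/2)$ and using Tonelli gives
\[
\sum_{\substack{\lambda\in\sigma_{\dd}(J)\\\Re\lambda\geq 0}}|\lambda-2|^{p-1/2}\,\mu\bigl(\{\omega:\lambda\in\Phi_{\omega}^{+}\}\bigr)\leq C_{p}\|v\|_{\ell^{p}}^{p}\int_{0}^{\pi/2}(1+2\tan\omega)^{p}\,\dd\mu(\omega).
\]

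A short computation shows $\lambda\in\Phi_{\omega}^{+}$ iff $\tan\omega>(2-\Re\lambda)^{+}/|\Im\lambda|$, and that the critical angle $\omega_{0}(\lambda)$ satisfies $\cos\omega_{0}(\lambda)=s(\lambda):=\dist(\lambda,[-2,2])/\dist(\lambda,\{-2,2\})$ (for $\Re\lambda>2$, $\omega_{0}=0$ and $s(\lambda)=1$). I would then look for $\mu$ of the form $\dd\mu(\omega)=\tilde h(\cos\omega)\sin\omega\,\dd\omega$, so that the substitution $u=\cos\omega$ yields $\mu(\{\omega:\lambda\in\Phi_{\omega}^{+}\})=\int_{0}^{s(\lambda)}\tilde h(u)\,\dd u$, and choose $\tilde h(u)\,\dd u$ (as a Lebesgue--Stieltjes measure, to avoid assuming smoothness of $f$) such that $\int_{0}^{s}\tilde h(u)\,\dd u=s^{p}f(-\log s)$; for absolutely continuous $f$ this means $\tilde h(s)=ps^{p-1}f(-\log s)-s^{p-1}f'(-\log s)$, and both terms are non-negative because $f$ is non-increasing.

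With this choice, using $|\lambda+2|^{1/2}\geq\sqrt{2}$ on the half $\Re\lambda\geq 0$, each summand on the left of the averaged inequality becomes
\[
|\lambda-2|^{p-1/2}\int_{0}^{s(\lambda)}\tilde h(u)\,\dd u=\frac{\dist(\lambda,[-2,2])^{p}}{|\lambda-2|^{1/2}}f\bigl(-\log s(\lambda)\bigr)\geq\sqrt{2}\,\frac{\dist(\lambda,[-2,2])^{p}}{|\lambda^{2}-4|^{1/2}}f\bigl(-\log s(\lambda)\bigr),
\]
which (up to a constant) is exactly the summand on the left of \eqref{eq:new L-T for non-self Jacobi}. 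The constant on the right of the averaged inequality reduces under $u=\cos\omega$ to $\int_{0}^{1}\tilde h(u)(1+2\sqrt{1-u^{2}}/u)^{p}\,\dd u$; near $u=0$ the dominant factor is $(2/u)^{p}\tilde h(u)\sim 2^{p}u^{-1}f(-\log u)$, whose integral under the further substitution $t=-\log u$ equals $2^{p}\int_{0}^{\infty}f(t)\,\dd t<\infty$, while the contribution near $u=1$ is bounded by a $p$-dependent multiple of $f(0)$. This reproduces the stated constant $C_{p,f}=C_{p}\bigl(\int_{0}^{\infty}f(t)\,\dd t+f(0)\bigr)$; the symmetric case $\Re\lambda<0$ is handled by replacing $\Phi_{\omega}^{+}$ by $\Phi_{\omega}^{-}$ and $|\lambda-2|$ by $|\lambda+2|$ throughout.

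The main obstacle I anticipate is the bookkeeping around the endpoints and the regularity of $f$: rigorously treating $\tilde h(u)\,\dd u$ as a Stieltjes measure so that no differentiability of $f$ is required, verifying that the identification $\cos\omega_{0}(\lambda)=s(\lambda)$ is valid uniformly across the regimes $\Re\lambda>2$, $\Re\lambda\in[0,2]$, and near $\Re\lambda=0$, and checking that combining the two halves via the two sums in \eqref{eq:diamond ineq by G-K} never double-counts an eigenvalue (which is automatic once one restricts to the one-sided bound in each half).
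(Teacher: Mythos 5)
Your proposal is correct and follows the same broad strategy as the paper -- averaging the Golinskii--Kupin diamond inequality \eqref{eq:diamond ineq by G-K} over the aperture angle -- but the implementation is genuinely different, and in some respects cleaner.

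The paper (following the Schr\"odinger precedent \cite{boegli2023improved}) first replaces $f$ by a piecewise $C^1$ dominating function $g$ with $\int g\lesssim\int f+f(0)$, substitutes $x=\tan\omega$, and averages the diamond bound against the density $x^{-p-1}g(\log x)\,\dd x$ over $x\in[1,\infty)$ only; it then invokes the one-sided Laplace-type estimate $\int_a^\infty \e^{-px}g(x)\,\dd x\geq (p+d)^{-1}\e^{-pa}g(a)$ to recover the pointwise weight, and uses $2-\Re\lambda\leq|\lambda-2|$ together with monotonicity to pass to the stated form. Because this averages only over $\omega\geq\pi/4$, eigenvalues with $2-\Re\lambda<|\Im\lambda|$ (the sector $\Sigma_1$ near the endpoint) are handled separately by a direct application of the $\omega=\pi/4$ bound, using $f\leq f(0)$.

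You instead choose the averaging measure on the whole of $(0,\pi/2)$ so that its cumulative mass past the critical angle $\omega_0(\lambda)$ is \emph{exactly} the target factor. The key geometric observation you exploit -- which the paper does not use -- is the exact identity $\cos\omega_0(\lambda)=s(\lambda):=\dist(\lambda,[-2,2])/\dist(\lambda,\{-2,2\})$ on the half-plane $\Re\lambda\geq0$ (valid in both regimes $\Re\lambda<2$ and $\Re\lambda\geq2$), which replaces the paper's cruder estimate $2-\Re\lambda\leq|\lambda-2|$. Setting $\mu$ equal to the pushforward under $u=\cos\omega$ of the Lebesgue--Stieltjes measure $\dd G$ with $G(s):=s^p f(-\log s)$, $G(0):=0$, gives $\mu((\omega_0,\pi/2))=G(s(\lambda))$ exactly; the monotonicity of $G$ (hence non-negativity of $\dd G$) is immediate since both $s^p$ and $s\mapsto f(-\log s)$ are non-decreasing when $f$ is non-increasing, so no smoothness of $f$ is needed and the approximation-by-$g$ step disappears. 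The constant then comes from $\int_0^1(1+2\sqrt{1-u^2}/u)^p\,\dd G(u)\leq 3^p\int_0^1 u^{-p}\,\dd G(u)$; an elementary Fubini/integration-by-parts computation gives $\int_0^1 u^{-p}\,\dd G(u)=f(0)+p\int_0^\infty f(t)\,\dd t$, reproducing $C_{p,f}=C_p(\int_0^\infty f+f(0))$ without further work. Your approach therefore buys the elimination of both the approximation lemma for $f$ and the Laplace-integral lower bound, at the mild cost of the Stieltjes bookkeeping you already flagged; the paper's approach has the advantage of reusing machinery already established in \cite{boegli2023improved}. One small addition worth making explicit in your write-up: you should note that the full diamond sum includes eigenvalues with $\Re\lambda<0$, whose (non-negative) contributions you simply discard when restricting to the right half-plane, and symmetrically for $\Phi_\omega^-$.
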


\begin{remark}
	The bound \eqref{eq:new L-T for non-self Jacobi} reduces to the classical Lieb-Thirring inequality \eqref{eq:L-T for self-adjoint Jacobi} in the self-adjoint case. In addition, this is a generalisation of the Hansmann-Katriel bound \eqref{eq:H-K bd for Jacobi} which is recovered by setting $f(x)=\e^{-\kappa x}$. 
\end{remark}

Next we show that  Theorem \ref{thm:new L-T for non-self Jacobi} is optimal in the sense that if the integrability  condition is removed, then the inequality \eqref{eq:new L-T for non-self Jacobi} cannot be true by proving explicit divergence rates.
To this end, we consider the Jacobi operator $J$ with  $a_k=1, c_k=1$ for all $k\in\Z$, which implies $v_{k}=|b_{k}|$. This is a discrete Schr\"odinger operator with a potential $b$.
For $n\in\N $ let $b=b(n)$ be defined by
\begin{equation}\label{eq:defb}
b_{k}:=
\begin{cases}
	\ii n^{-2/3}\quad&\textrm{if\;\;}k\in\{1,2,\ldots,n\},\\
	0\quad&\textrm{if\;\;}k\in\Z\backslash\{1,2,\ldots,n\}.
\end{cases}
\end{equation}
For ease of notation, we will not explicitly denote the dependence on $n$ by a further index.
Then $b\in \ell^{p}(\Z)$ and an easy calculation shows that
\begin{equation}\label{eq:norm of potential Jacobi}
	\|v\|^{p}_{\ell^{p}}=\|b\|^{p}_{\ell^{p}}=n^{1-2p/3}.
\end{equation}
A numerical range argument \cite[Lem.~4]{bogli2021lieb} implies the inclusion $\sigma_{\dd}(J)\subset[-2,2] + \ii(0,n^{-2/3}]$ for all $n\geq 2$. 

The divergence rates, Theorems~\ref{thm:sharpness thm} and~\ref{thm:sharpness thm for increasing}, that will be formulated below rely on eigenvalue estimates of this type of discrete Schr\"{o}dinger operators above.

\begin{theorem}\label{thm:sharpness thm}
	Let $p\geq 1$ and $\gamma\in(2/3, 1)$. Take a function $g:[1,\infty)\to[1,\infty)$ with $g(n)\to\infty$ as $n\to\infty$ so slowly such that
	\[
		\frac{g(n)}{n^{\gamma-2/3}}\to 0\quad\text{as}\quad n\to\infty.
	\]
	Then there exist $C_{p}>0$ and $n_{*}\geq 2$ such that for all continuous, non-increasing functions $f:[0,\infty)\to(0,\infty)$ with $\int_{0}^{\infty} f(x)\;\dd x=\infty$ and all integers $n\geq n_{*}$
	\begin{equation}\label{eq:sup of new L-T ineq over potential}
		\begin{aligned}
			\sup_{0\neq v\in \ell^{p}(\Z)}\frac{1}{\|v\|^{p}_{\ell^{p}}}\sum_{\lambda\in\sigma_{\dd}(J)} 
			\frac{\dist(\lambda,[-2,2])^{p}}{|\lambda^{2}-4|^{1/2}}&f\left(-\log\left(\frac{\dist(\lambda,[-2,2])}{\dist(\lambda,\{-2,2\})}\right)\right)\\
			&\geq C_{p}\left( F(\log n^{2/3})-3f(0) \log g(n)\right),
		\end{aligned}
	\end{equation}
	where $F(t):=\int_{0}^{t} f(x)\;\dd x$ for $t\geq 0$.
\end{theorem}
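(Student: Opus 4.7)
The plan is to mirror the proof of Theorem~\ref{thm: ratio for decreasing func} in the discrete setting: take as test potential the complex sequence $b=b(n)$ from~\eqref{eq:defb} (so $\|v\|_{\ell^p}^p=n^{1-2p/3}$) and lower-bound the left-hand side of~\eqref{eq:sup of new L-T ineq over potential} by restricting the sum over $\sigma_{\dd}(J)$ to a carefully chosen family of eigenvalues of $J=J_0+b$.

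First I would establish explicit asymptotics for the eigenvalues of $J$. Since $b$ is piecewise constant (taking only two values), a transfer-matrix calculation reduces the eigenvalue problem to an explicit characteristic equation. Writing $\lambda=2\cos\theta$ and $\lambda-\ii n^{-2/3}=2\cos\phi$, with $\Im\theta<0$ to ensure $\Im\lambda>0$ (matching the numerical-range inclusion $\sigma_{\dd}(J)\subset[-2,2]+\ii(0,n^{-2/3}]$), the matching of decaying exponentials at both edges of the support $\{1,\dots,n\}$ leads to a Dirichlet-type quantisation condition of the form $(w^{n+1}\pm 1)z=w^n\pm w$, where $z=e^{\ii\theta}$ and $w=e^{\ii\phi}$. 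Combined with the relation $\cos\phi-\cos\theta=-\ii n^{-2/3}/2$, this yields a family of eigenvalues $\{\lambda_j\}$ indexed essentially by $j\in\N$, with $\Re\theta_j$ close to $\pi j/(n+1)$ and perturbative corrections of order $n^{-2/3}$.

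Next I would restrict the sum to an index set of the form $\mathcal J_n=\{j\in\N:n^{2/3}g(n)\leq j\leq n^\gamma\}$, which is non-empty for large $n$ by the hypothesis $g(n)=o(n^{\gamma-2/3})$. On this range one obtains uniform estimates $\dist(\lambda_j,[-2,2])\asymp n^{-2/3}$, $|\lambda_j^2-4|^{1/2}\asymp j/n$, and $\dist(\lambda_j,\{-2,2\})\asymp (j/n)^2$ (the latter two follow from $\Re\lambda_j\approx 2\cos(\pi j/n)$ and $j\gg n^{2/3}$). Hence the ratio inside the argument of $f$ satisfies
\[
 -\log\Bigl(\frac{\dist(\lambda_j,[-2,2])}{\dist(\lambda_j,\{-2,2\})}\Bigr)\approx\log\!\bigl(j^2/n^{4/3}\bigr),
\]
which sweeps through an interval of the form $[2\log g(n),\,(2\gamma-\tfrac{4}{3})\log n]$ as $j$ ranges over $\mathcal J_n$.

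Substituting these estimates into the normalized sum and applying the monotonicity bound~\eqref{eq: monotone estimate} with the substitution $s=\log(j^2/n^{4/3})$ converts the sum into a constant multiple of $\int_{2\log g(n)}^{(2\gamma-4/3)\log n}f(s)\,\dd s$. Concavity of $F$ (since $f$ is non-increasing with $F(0)=0$) gives $F((2\gamma-\tfrac{4}{3})\log n)\geq(3\gamma-2)F(\log n^{2/3})$, while the non-increasing property of $f$ gives $F(2\log g(n))\leq 2f(0)\log g(n)$; combining these yields the claimed lower bound $C_p\bigl(F(\log n^{2/3})-3f(0)\log g(n)\bigr)$ after absorbing $(3\gamma-2)$ and numerical constants into $C_p$. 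The principal obstacle is the first step: establishing the eigenvalue asymptotics with enough uniformity in $j\in\mathcal J_n$. The characteristic equation is implicit and must be solved perturbatively in the small parameter $n^{-2/3}$ (in a manner analogous to Section~\ref{Preliminaries}); the lower cutoff $j\geq n^{2/3}g(n)$ is precisely what ensures $|\lambda_j-2|$ is dominated by its real part rather than its imaginary part, keeping the asymptotics uniform and away from the endpoint $+2$.
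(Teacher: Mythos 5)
Your proposal follows the same strategy as the paper: use the explicit potential $b$ from \eqref{eq:defb}, establish explicit eigenvalue asymptotics via a characteristic equation (the paper's Proposition~\ref{prop:Jacobi} uses the polynomial equation $\ii n^{-2/3}(z^{n+1}-1)(z^{n-1}-1) = z^{n-2}(z^2-1)^2$ from~\cite{bogli2021lieb} together with Rouch\'e's theorem on regions $D_j$), restrict to indices $j\gtrsim n^{2/3}g(n)$ so that the eigenvalues are bounded away from the endpoint $+2$, apply the monotonicity bound \eqref{eq: monotone estimate}, and substitute logarithmically. Your stated asymptotics for $\dist(\lambda_j,[-2,2])$, $|\lambda_j^2-4|^{1/2}$ and $\dist(\lambda_j,\{-2,2\})$ agree with what Proposition~\ref{prop:Jacobi} gives.

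The genuine issue is your upper truncation $j\leq n^\gamma$. This is unnecessary and costly: Proposition~\ref{prop:Jacobi} gives uniform asymptotics all the way up to $j\sim n/8$ (equivalently $\phi_j\leq\pi/4$), and the paper exploits this to arrive directly at
\[
\int_{\log(64\pi^2 g^2(n))}^{\log(\pi^2 n^{2/3}/4)}f(x)\,\dd x\geq F(\log n^{2/3})-3f(0)\log g(n),
\]
with no concavity step needed, since the upper limit already exceeds $\log n^{2/3}$. Your cutoff at $n^\gamma$ gives an integral only up to $(2\gamma-\tfrac{4}{3})\log n<\log n^{2/3}$, forcing the concavity estimate, and the resulting bound $(3\gamma-2)F(\log n^{2/3})-2f(0)\log g(n)$ can in fact vanish: take $f$ constant and $g(n)$ with $\log g(n)$ close to $(\gamma-2/3)\log n$. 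For $\gamma\in(2/3,8/9)$ this therefore does \emph{not} dominate the theorem's right-hand side $C_p\bigl(F(\log n^{2/3})-3f(0)\log g(n)\bigr)$, which remains strictly positive and growing in that regime. Extending the index set to $j\lesssim n/8$ (as in the paper's $\mathcal J(n)$) removes the concavity step and closes this gap; the remainder of your argument then goes through as in the paper.
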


\begin{remark}
Clearly, the lower bound on the right-hand side of \eqref{eq:sup of new L-T ineq over potential} diverges provided that $F(\log n^{2/3})$ is divergent faster than $\log g(n)$ as $n\to\infty$. In addition, we note that even though Theorem \ref{thm:new L-T for non-self Jacobi} requires $p\geq 3/2$, here Theorem \ref{thm:sharpness thm} does not.
\end{remark}
	
The following result concerns the divergence rates of the left-hand side of \eqref{eq:sup of new L-T ineq over potential} when a function $f$ is non-decreasing. Obviously, in this case, $\int_{0}^{\infty} f(x)\;\dd x=\infty$.

\begin{theorem}\label{thm:sharpness thm for increasing}
	Let $p\geq 1$,  $0<\varepsilon<2/3<\gamma<1$ and let $x_{0}\geq 1$. Take a function $g:[1,\infty)\to[1,\infty)$ with $g(n)\to\infty$ as $n\to\infty$ so slowly that
	\[
		\frac{g(n)}{n^{\gamma-2/3}}\to 0\quad\text{as}\quad n\to\infty.
	\]
	Then there exist $C_{p}>0$ and $n_{*}\geq 2$ such that for all integers $n\geq n_{*}$ and all continuous, non-decreasing functions $f:[0,\infty)\to(0,\infty)$ such that $f(\log t^{2})/t$ is monotonic for $t\geq x_{0}$ one has
	\begin{equation}\label{eq:sup of new L-T ineq over potential for increasing}
		\begin{aligned}
			\sup_{0\neq v\in \ell^{p}(\Z)}\frac{1}{\|v\|^{p}_{\ell^{p}}}&\sum_{\lambda\in\sigma_{\dd}(J)} 
			\frac{\dist(\lambda,[-2,2])^{p}}{|\lambda^{2}-4|^{1/2}}f\left(-\log\left(\frac{\dist(\lambda,[-2,2])}{\dist(\lambda,\{-2,2\})}\right)\right)\\
			&\geq C_{p}\left[F\left(\log(\pi^{2}n^{\varepsilon})\right)-F\left(\log(\pi^{2}g^{2}(n))\right)\right]
			\geq C_{p}f(0)\;\log\frac{n^{\varepsilon}}{g^{2}(n)}.
		\end{aligned}
	\end{equation}
	\end{theorem}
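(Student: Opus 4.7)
The plan is to adapt the proof of Theorem~\ref{thm:sharpness thm} to the non-decreasing setting, in direct analogy with how Theorem~\ref{thm: ratio for increasing func} adapts Theorem~\ref{thm: ratio for decreasing func} in the Schr\"odinger case. I would again test the supremum on the family of potentials $b=b(n)$ from \eqref{eq:defb}, so that $\|v\|^p_{\ell^p}=n^{1-2p/3}$ by \eqref{eq:norm of potential Jacobi}. From the Jacobi-side eigenvalue estimates established as an analogue of Proposition~\ref{prop:evls_ineq} in Section~\ref{sect:proofsJac} and already used for Theorem~\ref{thm:sharpness thm}, for all $n$ sufficiently large one obtains a family of discrete eigenvalues $\lambda_j\in\sigma_{\dd}(J)$ indexed by $j$ in an $n$-dependent set of the form $\mathcal{J}'_n=\{j\in\N: g(n)\leq j\leq n^{\varepsilon/2}\}$, together with two-sided asymptotics for $\dist(\lambda_j,[-2,2])$, $\dist(\lambda_j,\{-2,2\})$ and $|\lambda_j^2-4|^{1/2}$ in terms of $n$ and $j$, uniformly in $j\in\mathcal J'_n$.

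The key departure from the proof of Theorem~\ref{thm:sharpness thm} is that, because $f$ is now non-decreasing, I would use the lower (rather than upper) bound on the ratio $\dist(\lambda_j,\{-2,2\})/\dist(\lambda_j,[-2,2])$. Together with the eigenvalue asymptotics, this yields
\[
 f\Bigl(-\log\frac{\dist(\lambda_j,[-2,2])}{\dist(\lambda_j,\{-2,2\})}\Bigr)\gtrsim f(\log(\pi^2 j^2))
\]
uniformly in $j\in\mathcal J'_n$. Substituting the distance bounds into the summands of \eqref{eq:sup of new L-T ineq over potential for increasing}, summing over $j\in\mathcal J'_n$ and dividing by $\|v\|^p_{\ell^p}=n^{1-2p/3}$, I obtain a lower bound of the form $C_p\sum_{j\in\mathcal J'_n} j^{-1} f(\log(\pi^2 j^2))$ for the ratio on the left-hand side.

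Finally, invoking the monotonicity hypothesis on $t\mapsto f(\log t^2)/t$ for $t\geq x_0$ (applicable once $g(n)\geq x_0$), I would apply \eqref{eq: monotone estimate} to bound the sum by an integral and then perform the substitution $s=\log(\pi^2 t^2)$ to arrive at
\[
 \sum_{j\in\mathcal J'_n}\frac{f(\log(\pi^2 j^2))}{j}\gtrsim \frac{1}{2}\bigl[F(\log(\pi^2 n^\varepsilon))-F(\log(\pi^2 g^2(n)))\bigr],
\]
which is the first inequality of \eqref{eq:sup of new L-T ineq over potential for increasing} (up to absorbing the multiplicative constants produced by \eqref{eq: monotone estimate} into $C_p$). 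For the second inequality, since $f$ is non-decreasing and positive, $f\geq f(0)$ on $[0,\infty)$, so the displayed difference of $F$'s is at least $f(0)\log(n^\varepsilon/g^2(n))$; the hypothesis $g(n)/n^{\gamma-2/3}\to 0$ with $\varepsilon<2/3<\gamma$ ensures that $n^\varepsilon/g^2(n)\to\infty$, so this log is eventually positive. The main obstacle is securing the Jacobi eigenvalue asymptotics on the enlarged, $n$-dependent index set $\mathcal J'_n$ whose lower end diverges with $n$ (albeit slowly); once these are available, the summation and substitution steps above parallel the Schr\"odinger computation in the proof of Theorem~\ref{thm: ratio for increasing func}.
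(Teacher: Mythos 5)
Your proposal is correct and follows essentially the same route as the paper's proof: test on the potentials $b(n)$ from \eqref{eq:defb}, use the eigenvalues $\lambda_j$ produced by Proposition~\ref{prop:Jacobi}, exploit the non-decreasing property of $f$ by lower-bounding the ratio $\dist(\lambda_j,\{-2,2\})/\dist(\lambda_j,[-2,2])$ (rather than upper-bounding it as in Theorem~\ref{thm:sharpness thm}), convert the sum into an integral via the monotonicity of $t\mapsto f(\log t^2)/t$ and~\eqref{eq: monotone estimate}, and finish with the substitution $s=\log(\pi^2 t^2)$ to express the bound in terms of $F$. Two small remarks. First, your index set $\mathcal J'_n=\{j:g(n)\le j\le n^{\varepsilon/2}\}$ with the $f$-argument $\log(\pi^2 j^2)$ is a rescaled version of the actual labelling in Proposition~\ref{prop:Jacobi} (where $j$ ranges over $\mathcal J(n)=\{j:\tfrac12 n^{2/3}g(n)+\tfrac34\le j\le\tfrac n8-\tfrac14\}$, $\phi_j\asymp x_j=(4j-1)\pi/(2n)$, and the $f$-argument is $\log(\phi_j^2 n^{2/3}/4)$); the substitution $j\mapsto x_j n^{1/3}/(4\pi)$ converts one into the other, so this is cosmetic, but as written your $\mathcal J'_n$ does not literally match the eigenvalue count of the proposition. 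Second, your claim that the hypothesis $g(n)/n^{\gamma-2/3}\to 0$ together with $\varepsilon<2/3<\gamma$ ensures $n^\varepsilon/g^2(n)\to\infty$ is not correct without a further restriction: one can have $g(n)\ll n^{\gamma-2/3}$ while $g(n)\gg n^{\varepsilon/2}$ (e.g.\ $\gamma=0.9$, $\varepsilon=0.1$, $g(n)=n^{0.2}$); the paper's subsequent Remark explicitly notes the additional condition $g(n)\ll n^{\varepsilon/2}$ is needed for the right-hand side to diverge. Neither issue affects the validity of the argument you outline, which matches the paper in all essentials.
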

	
	\begin{remark}
We notice that if $g(n)\ll n^{\varepsilon/2}$, then the right-hand side of~\eqref{eq:sup of new L-T ineq over potential for increasing} diverges as $n\to\infty$.
\end{remark}

\begin{remark}
It would be interesting to investigate whether or not an analogue of Theorem~\ref{thm:sum} for Jacobi operators is true. Unfortunately, the scaling argument that was used in the proof for Schr\"odinger operators is no longer available in the Jacobi case.
\end{remark}


The last main result proves that the $\omega$-dependence of the constant $C_{p,\omega}$ in~\eqref{eq:diamond ineq by G-K}, i.e.\ the order $\tan^{p}(\omega)$ as $\omega\to\frac{\pi}{2}^{-}$, is optimal.

\begin{theorem}\label{thm:sharpconstJacobi}
	Let $p\geq 1$ and let $\varphi:(0,\pi/2)\to(0,\infty)$ be a continuous function such that $\varphi(\omega)\ll\tan^{p}(\omega)$ as $\omega\to\frac{\pi}{2}^{-}$. Then
	\begin{equation}\label{eq:sharp diamond_G-K}
		\limsup_{\omega\to\frac{\pi}{2}^{-}}\sup_{0\neq v\in \ell^{p}(\Z)}\frac{1}{\varphi(\omega)\|v\|^{p}_{\ell^{p}}}
		\sum_{\substack{\lambda\in\sigma_{\dd}(J) \\ 2-\Re\lambda<\tan(\omega)|\Im\lambda|}} 
		|\lambda-2|^{p-1/2}=\infty.
	\end{equation}
\end{theorem}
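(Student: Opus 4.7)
The plan is to mirror the proof of Theorem~\ref{thm:exponent outside a cone sharp}, substituting the role of the Schr\"odinger potential $V_h=\ii h\chi_{B_1(0)}$ by the Jacobi potential $b=b(n)$ from~\eqref{eq:defb}; recall that $\|v\|^p_{\ell^p}=n^{1-2p/3}$ by~\eqref{eq:norm of potential Jacobi}. The key input is the eigenvalue analysis that underlies the proofs of Theorems~\ref{thm:sharpness thm} and~\ref{thm:sharpness thm for increasing} in Section~\ref{sect:proofsJac}. In parallel with Proposition~\ref{prop:evls_ineq}, this analysis furnishes a family of discrete eigenvalues $\{\lambda_j\}_{j\in\mathcal{J}_n}$ of $J$ accumulating at $\lambda=2$ with the two-sided asymptotics
\[
 2-\Re\lambda_j\;\asymp\;\frac{j^2}{n^2},\qquad |\Im\lambda_j|\;\asymp\; n^{-2/3},
\]
uniformly over an index range of the form $1\leq j\leq Cn^{\gamma}$ with $\gamma\in(2/3,1)$, matching the parameter $\gamma$ of Theorems~\ref{thm:sharpness thm},~\ref{thm:sharpness thm for increasing}.

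Next I would pick $\omega=\omega(n)\to\pi/2^-$ along integer $n\to\infty$ at the rate $\tan(\omega(n))\asymp n^{2(\gamma-2/3)}$ (so $1\ll \tan(\omega(n))\ll n^{2/3}$) and set $j_{\max}(n):=\lfloor\pi^{-1}\sqrt{\tan(\omega(n))}\,n^{2/3}\rfloor$; by construction $j_{\max}(n)\in\mathcal{J}_n$ and $j_{\max}(n)\to\infty$. This is arranged so that, for every $1\leq j\leq j_{\max}(n)$,
\[
 2-\Re\lambda_j\;\lesssim\;\frac{j_{\max}(n)^2}{n^2}\;\asymp\;\tan(\omega(n))\,n^{-2/3}\;\lesssim\;\tan(\omega(n))\,|\Im\lambda_j|,
\]
so (after absorbing multiplicative constants into $\omega(n)$) $\lambda_j\in\Phi^+_{\omega(n)}$ for all such $j$. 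I would then lower bound the spectral sum by splitting the range at $j\asymp n^{2/3}$: for $j\leq n^{2/3}$ the imaginary part dominates and $|\lambda_j-2|\asymp n^{-2/3}$, while for $n^{2/3}<j\leq j_{\max}(n)$ the real-part deviation dominates and $|\lambda_j-2|\asymp j^2/n^2$. A direct summation in each regime yields
\[
 \sum_{\substack{\lambda\in\sigma_{\dd}(J)\\ 2-\Re\lambda<\tan(\omega(n))|\Im\lambda|}} |\lambda-2|^{p-1/2}
 \;\gtrsim\; \tan^p(\omega(n))\,n^{1-2p/3}
 \;=\; \tan^p(\omega(n))\,\|v\|^p_{\ell^p},
\]
with constants depending only on $p$.

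Dividing by $\varphi(\omega(n))\|v\|^p_{\ell^p}$ and invoking the hypothesis $\varphi(\omega)\ll\tan^p(\omega)$ as $\omega\to\pi/2^-$, the ratio $\tan^p(\omega(n))/\varphi(\omega(n))$ diverges as $n\to\infty$, establishing~\eqref{eq:sharp diamond_G-K}. The main obstacle I anticipate is arranging the eigenvalue asymptotics to be uniform on an index range large enough to accommodate $j_{\max}(n)$; this requires the same type of uniform control on the Jacobi characteristic equation as in Lemmas~\ref{lem:aux_zeros}--\ref{lem:aux_eq} for Schr\"odinger, and should be present (implicitly or explicitly) in the technical setup of Section~\ref{sect:proofsJac}. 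Once those bounds are in hand, the remaining steps are routine summation and elementary manipulation of the sector condition.
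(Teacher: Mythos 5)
Your strategy matches the paper's in spirit: deploy the eigenvalue family $\{\lambda_j\}$ from Proposition~\ref{prop:Jacobi}, verify the sector condition for an $n$-dependent aperture $\omega(n)\to\pi/2^-$, bound the sum, and read off the divergence from $\varphi(\omega)\ll\tan^p(\omega)$. However, there are two concrete issues in the way you set things up.

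First, the index range you quote is wrong. Proposition~\ref{prop:Jacobi} constructs eigenvalues only for $j\in\mathcal J(n)=\{j:\tfrac12 n^{2/3}g(n)+\tfrac34\leq j\leq\tfrac n8-\tfrac14\}$, so the lower bound is $\sim n^{2/3}g(n)$ with $g(n)\to\infty$ (not $j\geq 1$), and the upper bound is $\sim n/8$ (not $Cn^\gamma$; the parameter $\gamma$ controls the inner radius of the annulus $D_j$, not the index ceiling). In particular, your ``small-$j$'' regime $j\leq n^{2/3}$ lies entirely outside $\mathcal J(n)$: no eigenvalues with $|\lambda_j-2|\asymp n^{-2/3}$ are established there. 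Fortunately you do not need that regime --- it contributes only $\asymp n^{1-2p/3}=\|v\|^p_{\ell^p}$, which is subdominant against the $\tan^p(\omega(n))\|v\|^p_{\ell^p}$ coming from the ``large-$j$'' regime --- so the fix is simply to discard it and run the sum over $\mathcal J(n)\cap\{j\leq j_{\max}(n)\}$; the constraint $g(n)/n^{\gamma-2/3}\to 0$ guarantees this intersection has the full width $\sim n^\gamma$. Your asymptotics $2-\Re\lambda_j\asymp j^2/n^2$ and $|\Im\lambda_j|\asymp n^{-2/3}$ are indeed what Proposition~\ref{prop:Jacobi} provides on $\mathcal J(n)$, since the error $\mathcal O(n^{-\gamma})$ is dominated by $j^2/n^2\geq n^{-2/3}g^2(n)/4$ for $\gamma>2/3$.

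Second, your choice $\tan(\omega(n))\asymp n^{2(\gamma-2/3)}$ forces the truncation at $j_{\max}(n)\asymp\sqrt{\tan(\omega(n))}\,n^{2/3}$ and hence the two-regime split, which is unnecessary overhead. The paper instead takes $\tan(\omega(n))=4(2-\sqrt 2)\,n^{2/3}$; since $\phi_j\leq\pi/4$ gives $1-\cos\phi_j\leq(2-\sqrt2)/2$ and Proposition~\ref{prop:Jacobi} gives $|\lambda_j-2|\leq 4(1-\cos\phi_j)$ and $|\Im\lambda_j|\geq n^{-2/3}/2$, one gets $\frac{2-\Re\lambda_j}{|\Im\lambda_j|}<8(1-\cos\phi_j)n^{2/3}\leq 4(2-\sqrt2)n^{2/3}=\tan(\omega(n))$ for \emph{all} $j\in\mathcal J(n)$. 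The whole index set lies inside the diamond, the sum $\sum_{j\in\mathcal J(n)}(1-\cos\phi_j)^{p-1/2}\gtrsim\sum\phi_j^{2p-1}\gtrsim n$ drops out directly, and the conclusion follows with no case split and no implicit constants to absorb into $\omega(n)$. Your variant does yield the same final bound $\gtrsim\tan^p(\omega(n))/\varphi(\omega(n))$ after the above repairs, but the paper's choice of $\omega(n)$ is the cleaner calibration.
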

The proof relies on eigenvalue estimates for the same class of potentials $b$ as in the previous results.

\subsection{Proofs of main results}\label{sect:proofsJac}

First we prove the new Lieb--Thirring type inequalities.

\begin{proof}[Proof of Theorem \ref{thm:new L-T for non-self Jacobi}]
	Assume that the given function $f$ satisfies
	\[
		\int_{0}^{\infty} f(x)\;\dd x<\infty.
	\]
	Let $d>0$. Then, following the construction from \cite[Thm.~2.1]{boegli2023improved}, we can always find a continuous, non-increasing, integrable, piecewise $C^{1}$-function $g:[0,\infty)\to(0,\infty)$ such that 
	\begin{equation}\label{eq:piecewise functions bd}
		f\leq g\quad\text{and}\quad\int_{0}^{\infty} g(x)\;\dd x\leq 2\int_{0}^{\infty} f(x)\;\dd x+\frac{2f(0)}{d}<\infty,
	\end{equation}
and it then follows that, for $a>0$,
	\begin{equation}\label{eq:integral lower bd S}
		\int_{a}^{\infty} \e^{-px}g(x)\;\dd x\geq\frac{1}{p+d}\;\e^{-pa}g(a).
	\end{equation}
	
	Now, we define the following sector in the complex plane
	\[
		\Sigma_{1}:=\{\lambda\in\C : \,\Re\lambda\geq 0,\;2-\Re\lambda<|\Im\lambda|\}
		=\{\lambda\in \Phi^{+}_{\pi/4}:\,\Re\lambda\geq 0\}.
	\]
	It can be seen that for $\lambda\in\Sigma_{1}$, we have $|\lambda+2|\geq 2$ and hence
	\[
		|\lambda-2|^{p-1/2} \geq \dist(\lambda,[-2,2])^{p} \frac{|\lambda+2|^{1/2}}{|\lambda^2-4|^{1/2}}
		\geq\frac{\sqrt{2}}{f(0)}\;\frac{\dist(\lambda,[-2,2])^{p}}{|\lambda^{2}-4|^{1/2}}
		f\left(-\log\left(\frac{\dist(\lambda,[-2,2])}{\dist(\lambda,\{-2,2\})}\right)\right)
	\]
	where we have used that $f(x)\leq f(0)$ for all $x\geq 0$.
	Applying this inequality to \eqref{eq:diamond ineq by G-K} yields
	\begin{equation}\label{eq:L-T on Sigma1}
		\sum_{\lambda\in\sigma_{\dd}(J)\cap\Sigma_{1}} \frac{\dist(\lambda,[-2,2])^{p}}{|\lambda^{2}-4|^{1/2}}
		f\left(-\log\left(\frac{\dist(\lambda,[-2,2])}{\dist(\lambda,\{-2,2\})}\right)\right)\leq C_p\,f(0)\|v\|^{p}_{\ell^{p}}.
	\end{equation}
	Here and in the following, $C_p>0$ denotes a generic constant.
	
	Next, we define another sector $\Sigma_{2}:=\{\lambda\in\C : \Re\lambda\geq 0\}\backslash\Sigma_{1}.$ 
	Note that for $\lambda\in \sigma_d(J)\cap \Sigma_2$ we have 
	$$\Re\lambda\in [0,2),\quad |\Im\lambda|=\dist(\lambda,[-2,2]), \quad \frac{2-\Re\lambda}{|\Im\lambda|}\geq 1.$$
	Due to the inequality \eqref{eq:diamond ineq by G-K} with  $x=\tan(\omega)\in[0,\infty)$, we have the estimate
	\begin{equation}\label{eq:reformulated G-K for Sigma2}
		\sum_{\lambda\in\sigma_{\dd}(J)\cap\Sigma_{2},\;\frac{2-\Re\lambda}{|\Im\lambda|}<x} |\lambda-2|^{p-1/2}\leq C_p\,(1+2x)^{p}\|v\|^{p}_{\ell^{p}}.
	\end{equation}
 We, then, multiply both sides of \eqref{eq:reformulated G-K for Sigma2} by $x^{-p-1}g(\log x)$ and integrate over $x\in[1,\infty)$. For the left-hand side one has
	\begin{align*}
		&\int_{1}^{\infty} x^{-p-1}g(\log x)\sum_{\lambda\in\sigma_{\dd}(J)\cap\Sigma_{2},\;\frac{2-\Re\lambda}{|\Im\lambda|}<x} |\lambda-2|^{p-1/2}\;\dd x\\
		&=\sum_{\lambda\in\sigma_{\dd}(J)\cap\Sigma_{2}} |\lambda-2|^{p-1/2} \int_{(2-\Re\lambda)/|\Im\lambda|}^{\infty} x^{-p-1}g(\log x)\;\dd x\\
		&=\sum_{\lambda\in\sigma_{\dd}(J)\cap\Sigma_{2}} |\lambda-2|^{p-1/2} \int_{\log((2-\Re\lambda)/|\Im\lambda|)}^{\infty} \e^{-px}g(x)\;\dd x\\
		&\geq C_p\,\sum_{\lambda\in\sigma_{\dd}(J)\cap\Sigma_{2}} |\lambda-2|^{p-1/2}\left(\frac{|\Im\lambda|}{2-\Re\lambda}\right)^{p}
			g\left(-\log\left(\frac{|\Im\lambda|}{2-\Re\lambda}\right)\right)\qquad\qquad(\textrm{by \eqref{eq:integral lower bd S}})\\
		&=C_p\,\sum_{\lambda\in\sigma_{\dd}(J)\cap\Sigma_{2}} |\lambda-2|^{p-1/2}\left(\frac{\dist(\lambda,[-2,2])}{2-\Re\lambda}\right)^{p}
		g\left(-\log\left(\frac{\dist(\lambda,[-2,2])}{2-\Re\lambda}\right)\right)\\
		&\geq C_p\,\sum_{\lambda\in\sigma_{\dd}(J)\cap\Sigma_{2}} \frac{\dist(\lambda,[-2,2])^{p}}{|\lambda^{2}-4|^{1/2}}
		g\left(-\log\left(\frac{\dist(\lambda,[-2,2])}{\dist(\lambda,\{-2,2\})}\right)\right),
	\end{align*}
	where we have used $|\lambda-2|^{-1/2}\geq \sqrt{2}\, |\lambda^2-4|^{-1/2}$ and $2-\Re\lambda\leq|\lambda-2|=\dist(\lambda,\{-2,2\})$ in the last step.
	
	For the right-hand side of \eqref{eq:reformulated G-K for Sigma2} one proceeds similarly.
	With $(1+2x)^p\leq 3^p\, x^p$ for $x\geq 1$ we obtain
	\[
		\int_{1}^{\infty} C_p\,(1+2x)^{p}x^{-p-1}g(\log x)\;\dd x\leq 3^{p}C_p\, \int_{0}^{\infty} g(x)\;\dd x.
	\]
	Together with the bounds \eqref{eq:piecewise functions bd}, we finally arrive at
	\begin{equation}\label{eq:L-T on Sigma2}
		\sum_{\lambda\in\sigma_{\dd}(J)\cap\Sigma_{2}} \frac{\dist(\lambda,[-2,2])^{p}}{|\lambda^{2}-4|^{1/2}}
		f\left(-\log\left(\frac{\dist(\lambda,[-2,2])}{\dist(\lambda,\{-2,2\})}\right)\right)\leq C_{p,f}\,\|v\|^{p}_{\ell^{p}}.
	\end{equation}
	Noting that $\Sigma_{1}\cup\Sigma_{2}=\{\lambda\in\C : \Re\lambda\geq 0\}$ we have proven the inequality \eqref{eq:new L-T for non-self Jacobi} for all discrete eigenvalues in the right half-plane by means of \eqref{eq:L-T on Sigma1} and \eqref{eq:L-T on Sigma2}. 
	
	The proof for the left half-plane $\{\lambda\in\C : \Re\lambda\leq 0\}$ is analogous.  Namely, we redefine the  sectors $\Sigma_1, \Sigma_2$ appropriately, 
\begin{align*}
\Sigma_{1}&:=\{\lambda\in\C : \,\Re\lambda\leq 0,\;2+\Re\lambda<|\Im\lambda|\}
		=\{\lambda\in \Phi^{-}_{\pi/4}:\,\Re\lambda\leq 0\}, \\
\Sigma_2&:=\{\lambda\in\C : \Re\lambda\leq 0\}\backslash\Sigma_{1},
\end{align*}	
 and we use
	\[
		\sum_{\lambda\in\sigma_{\dd}(J)\cap\Phi^{-}_{\omega}} |\lambda+2|^{p-1/2}\leq C_{p,\omega}\,\|v\|^{p}_{\ell^{p}}.
	\]
	This completes the proof.
\end{proof}

In order to prove the optimality and divergence rates, we use the following result.
In the following, $J$ always denotes the Jacobi operator with the potential $b$ as in \eqref{eq:defb}.

\begin{proposition}\label{prop:Jacobi}
Let $\gamma\in(2/3,1)$ and let $g$ be a function as in Theorems~\ref{thm:sharpness thm} and~\ref{thm:sharpness thm for increasing}. 
Define
$$\mathcal J(n):=\left\{j\in\Z: \frac 1 2 n^{2/3}g(n)+\frac 3 4\leq j\leq \frac n 8-\frac 1 4\right\}.$$
For $j\in\mathcal J(n)$ define $ x_{j}:=\frac{(4j-1)\pi}{2n}$ and
	\begin{equation}\label{eq:NewRestrictions on angle & radius}
		D_j:=\left\{z=r\e^{\ii \phi}:\,x_{j}-\frac{\pi}{n}\leq\phi\leq x_{j}+\frac{\pi}{n},\; 
		R_1:=1-n^{-\gamma}\leq r\leq 1-\frac{\log g(n)}{n}=:R_2
		\right\}.
	\end{equation}
Then there exists $n_*\in\N$ such that for all integers $n\geq n_*$ and all $j\in\mathcal J(n)$, there exists $z_j=r_j\e^{\ii\phi_j}\in D_j$ such that the operator $J$ has an eigenvalue
$$\lambda_{j}=\ii n^{-2/3}+z_{j}+z^{-1}_{j}=2\cos\phi_j+\ii n^{-2/3}+\mathcal O(n^{-\gamma}), $$
with $\lambda_{j_1}\neq \lambda_{j_2}$ for $j_1\neq j_2$, and
\begin{equation*}
\begin{aligned}
		\dist(\lambda_{j},[-2,2])&=\Im\lambda_{j}=n^{-2/3}+\mathcal{O}(n^{-\gamma}),\\
		\dist(\lambda_{j},\{-2,2\})&=|\lambda_{j}-2|=2(1-\cos\phi_{j})\left(1+\mathcal{O}\left(\frac{1}{g^{2}(n)}\right)\right),\\
		|\lambda^{2}_{j}-4|&=4\sin^{2}\phi_{j}\left(1+\mathcal{O}\left(\frac{1}{g^{2}(n)}\right)\right),
		\end{aligned}
\end{equation*}
	as $ n\to\infty$.
	The involved constants in the error terms are all independent of $j$.
\end{proposition}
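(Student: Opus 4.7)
The strategy is to construct eigenfunctions of $J$ via a three-piece Jost ansatz, reduce the boundary matching to a scalar characteristic equation, and then apply an argument-principle (Rouché) argument on each disc $D_j$ to locate the eigenvalues.

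For $\lambda$ with $\Im\lambda>0$, let $\eta$ denote the unique root of $w+w^{-1}=\lambda$ with $|\eta|<1$. Outside $\{1,\ldots,n\}$ the eigenvalue equation $Ju=\lambda u$ reduces to the free recurrence, and $\ell^{2}$-decay forces the ansatz $u_k=A_-\eta^{-k}$ ($k\le 0$), $u_k=B_+ z^k+B_- z^{-k}$ ($1\le k\le n$), $u_k=A_+\eta^k$ ($k\ge n+1$), where $z$ is chosen from the equation $z+z^{-1}=\lambda-\ii n^{-2/3}$. Matching at $k\in\{0,1,n,n+1\}$ produces a $4\times 4$ homogeneous linear system in $(A_-,A_+,B_+,B_-)$; after elimination of the coefficients one obtains the characteristic equation
\[
(z\eta-1)^2=z^{2n}(z-\eta)^2.
\]
Next I would fix which root of $\eta+\eta^{-1}=z+z^{-1}+\ii n^{-2/3}$ to use. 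A short calculation based on $|\eta|^2\approx 1+2(1-|z|)-n^{-2/3}/\sin\phi$ shows that in the regime of $D_j$ (where $1-|z|\le n^{-\gamma}$ and $\sin\phi\gtrsim g(n)n^{-1/3}$, so that $1-|z|<n^{-2/3}/(2\sin\phi)$ uniformly) the root with $|\eta|<1$ is $\eta=z^{-1}+\ii n^{-2/3}/(1-z^2)+\mathcal O(n^{-4/3})$, not the root near $z$. Writing $\epsilon:=\eta-z^{-1}$ yields $z\eta-1=z\epsilon=\ii n^{-2/3}z/(1-z^2)$ exactly, while $z-\eta=(z-z^{-1})(1+\mathcal O(n^{-2/3}/\sin^2\phi))$. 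Substituting into the characteristic equation and simplifying, one arrives at
\[
z^{n-2}(z^2-1)^2=\pm\ii n^{-2/3}\bigl(1+\mathcal O(g^{-2}(n))\bigr),
\]
valid uniformly for $z\in D_j$ and $j\in\mathcal J(n)$, since $n^{-2/3}/\sin^2\phi\lesssim g^{-2}(n)$ in this range.

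The central technical step is then an argument-principle application on $\partial D_j$ to the holomorphic function $G(z):=z^{n-2}(z^2-1)^2-\ii n^{-2/3}$ (with $+$ sign; the $-$ sign is analogous). The leading-order solution has modulus set by $4r^{n-2}\sin^2\phi=n^{-2/3}$, giving $r\approx 1-(n-2)^{-1}\log(4n^{2/3}\sin^2\phi)\in[R_1,R_2]$ for $j\in\mathcal J(n)$, and argument set by $n\phi\equiv \pi/2\pmod{2\pi}$, which selects $\phi\approx x_j=(4j-1)\pi/(2n)$. On the outer arc $r=R_2$ the dominant term $z^{n-2}(z^2-1)^2$ has modulus $\gtrsim g(n)n^{-2/3}\gg n^{-2/3}$, and its argument $\pi+n\phi$ winds by exactly $2\pi$ as $\phi$ sweeps the angular width $2\pi/n$; on the three other boundary pieces $G$ stays within a narrow sector around the common argument $-\pi/2$ (since the argument of $z^{n-2}(z^2-1)^2$ there is pinned near $-\pi/2$ by $n\phi\equiv -\pi/2\pmod{2\pi}$, aligning with $\arg(-\ii n^{-2/3})$ regardless of which term dominates), contributing no net winding. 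Thus the winding of $\arg G$ around $\partial D_j$ equals $2\pi$, yielding a unique simple zero $z_j\in D_j$ by the argument principle, with the $\mathcal O(g^{-2}(n))$ perturbation from the exact characteristic equation absorbed by Rouché on the dominant arc. The discs $D_j$ are pairwise disjoint by construction, and $z\mapsto z+z^{-1}$ is injective on $\{|z|<1\}$, so $z_{j_1}\neq z_{j_2}$ implies $\lambda_{j_1}\neq \lambda_{j_2}$.

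Finally, the asymptotic formulas for $\lambda_j=\ii n^{-2/3}+z_j+z_j^{-1}$ with $z_j=r_je^{\ii\phi_j}$ follow by direct calculation: $(r+r^{-1})\cos\phi=2\cos\phi+\mathcal O((1-r)^2)$ and $(r-r^{-1})\sin\phi=\mathcal O(1-r)$ give $\lambda_j=2\cos\phi_j+\ii n^{-2/3}+\mathcal O(n^{-\gamma})$ and $\dist(\lambda_j,[-2,2])=\Im\lambda_j=n^{-2/3}+\mathcal O(n^{-\gamma})$; the identity $z+z^{-1}-2=(z-1)^2/z$ combined with $|(z_j-1)^2/z_j|=2(1-\cos\phi_j)+\mathcal O(1-r_j)$ yields the quoted formulas for $|\lambda_j-2|$ and $|\lambda_j^2-4|=|\lambda_j-2||\lambda_j+2|$, where the $\mathcal O(g^{-2}(n))$ relative error reflects that the $\ii n^{-2/3}$ term is dominated by $|\Re(\lambda_j-2)|=2(1-\cos\phi_j)\gtrsim g^2(n)n^{-2/3}$. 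The hard part will be the uniform control of the winding/Rouché estimate across the full boundary $\partial D_j$ for every $j\in\mathcal J(n)$, because $|z^{n-2}(z^2-1)^2|$ varies over many orders of magnitude along the angular sides (from $\approx e^{-n^{1-\gamma}}$ at $r=R_1$ to $\approx 1/g(n)$ at $r=R_2$); this requires treating the subdominant and dominant regimes separately on each side and verifying the arguments align sufficiently with $\arg(\pm\ii n^{-2/3})$ so that no spurious winding or boundary zero is picked up.
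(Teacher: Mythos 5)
Your strategy reaches the same characteristic equation (I checked your Jost ansatz derivation $z^{2n}(z-\eta)^2=(z\eta-1)^2$; eliminating $\eta$ with the $-$ branch of the square root reproduces the paper's $\ii n^{-2/3}(z^{n+1}-1)(z^{n-1}-1)=z^{n-2}(z^2-1)^2$), but the Rouch\'e step is set up differently, and that difference is exactly where your proposal has a gap that the paper's proof avoids. The paper compares $f(z)=\ii n^{-2/3}+4z^n\sin^2 x_j$ against the remainder $h(z)$; crucially $\sin^2 x_j$ is a \emph{constant} (the value of $\sin^2\phi$ frozen at the centre angle of $D_j$), so $f$ has an explicit, locatable zero $\tilde z_j$, and the remainder $h$ is shown to be uniformly $\mathcal O(n^{-2/3}/g(n))$ on all of $\partial D_j$, making $|f|>|h|$ a direct and uniform comparison. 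Your $G(z)=z^{n-2}(z^2-1)^2-\ii n^{-2/3}$ keeps the full $\phi$-dependent factor $(z^2-1)^2$, and its modulus ranges over many orders of magnitude along the radial sides (from $\e^{-n^{1-\gamma}}$ at $R_1$ to $\sim g(n)n^{-2/3}$ at $R_2$). You are forced into the argument-principle route with a genuine crossover regime where the two terms are of comparable size, and the uniform control across that regime for all $j\in\mathcal J(n)$ is precisely the part you flag as ``the hard part'' without carrying it out. This is not a cosmetic difference: the paper's freezing trick is the idea that makes the Rouch\'e estimate routine, and it is missing from your write-up.

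Two further points. First, the claimed relative error $\mathcal O(g^{-2}(n))$ in $z^{n-2}(z^2-1)^2=\pm\ii n^{-2/3}(1+\cdots)$ is too optimistic. In eliminating $\epsilon$ you must divide by $z^n-z$, and the approximation $z^n-z\approx -z$ costs a relative error $\mathcal O(z^{n-1})=\mathcal O(g^{-1}(n))$ (equivalently, the $z^{n\pm1}$ terms in $(z^{n+1}-1)(z^{n-1}-1)$ are $\mathcal O(r^n)=\mathcal O(g^{-1}(n))$). The proof is not broken by this, since $\mathcal O(g^{-1}(n))$ is still small, but the bound you state is wrong. Second, the square root of $(z\eta-1)^2=z^{2n}(z-\eta)^2$ introduces a $\pm$, and only one sign ($z^n(z-\eta)=-(z\eta-1)$) yields the correct polynomial equation and the solutions with $|\eta|<1$; you would need to justify discarding the other branch, and to verify that the $z_j$ you locate satisfies the auxiliary admissibility condition $|z^{n+1}-1|<|z^n-z|$ (the paper does this explicitly via the bound $|k_j|<1$). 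In short: your route is plausible and recovers the right leading-order picture, but the central Rouch\'e/winding estimate is left open, the paper's simpler splitting shows this is avoidable, and the stated error order is off by one power of $g$.
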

	
\begin{proof}
	Due to the eigenvalue construction in \cite[Sect.~2.1]{bogli2021lieb}, the complex solutions $z$ of the polynomial equation
	\begin{equation}\label{eq:charac polynomial eq}
		\ii n^{-2/3}(z^{n+1}-1)(z^{n-1}-1)=z^{n-2}(z^{2}-1)^{2},
	\end{equation}
	with $|z|<1,\;\Im z>0$ and $|z^{n+1}-1|<|z^{n}-z|$, correspond to eigenvalues $\lambda$ of $J$ outside the closed interval $[-2,2]$. In fact, these eigenvalues $\lambda$ are explicitly given by 
	\[
		\lambda=\ii n^{-2/3}+z+z^{-1},
	\]
	see \cite[Prop.~6]{bogli2021lieb}.  

	To solve \eqref{eq:charac polynomial eq} for $z$, we proceed analogously as in \cite[Prop.~8]{bogli2021lieb} with the following modified restrictions. We seek solutions $z=r\e^{\ii\phi}$ of \eqref{eq:charac polynomial eq} in the closed region determined by
	\begin{equation}\label{eq:restrictions on angle & radius}
		n^{-1/3}g(n)\pi\leq\phi\leq\frac{\pi}{4}\quad\text{and}\quad
		1-\frac{1}{\sqrt{n}}\leq r\leq 1-\frac{\log g(n)}{n}.
	\end{equation}
	Note that the assumption on $g$ guarantees that for $n\to\infty$, $n^{-1/3}g(n)\ll n^{\gamma-1}$, therefore the set determined by~\eqref{eq:restrictions on angle & radius} is non-empty for all $n$ sufficiently large.
	First, we prove the existence of solutions of the polynomial equation in this closed region. 
	
	For $n\in\N$ notice that $j\in\mathcal J(n)$ if and only if
	$$
		\left[x_{j}-\frac{\pi}{n},x_{j}+\frac{\pi}{n}\right]\subset\left[n^{-1/3}g(n)\pi,\frac{\pi}{4}\right].
	$$
	We thus observe that for $j\in\mathcal J(n)$, $D_j$ is a subset of that corresponding to \eqref{eq:restrictions on angle & radius}.
	For each $n$ sufficiently large we will employ Rouch\'{e}'s theorem to show that \eqref{eq:charac polynomial eq} has a solution $z$ in the interior of $D_{j}$.
	
	Due to \eqref{eq:restrictions on angle & radius},
	\[
		r^{n}\leq\left(1-\frac{\log g(n)}{n}\right)^{n}=\frac{1}{g(n)}\left(1+\mathcal{O}\left(\frac{\log^{2}g(n)}{n}\right)\right),\quad n\to\infty.
	\]
	Note that $\frac{\log^{2}g(n)}{n}\to 0$ as $n\to\infty$ by the assumptions on $g(n)$.
	Thus
	\begin{equation}\label{eq:radius power n asymp}
		r^{n}=\mathcal{O}\left(\frac{1}{g(n)}\right),\quad n\to\infty.
	\end{equation}
	Moreover, again by \eqref{eq:restrictions on angle & radius},
	\begin{equation}\label{eq:r asymp formula sqrt}
		r=1+\mathcal{O}\left(\frac{1}{\sqrt{n}}\right),\quad n\to\infty.
	\end{equation}

	We rearrange \eqref{eq:charac polynomial eq}:
	\begin{align*}
		&\ii n^{-2/3}(z^{n+1}-1)(z^{n-1}-1)=z^{n-2}(z^{2}-1)^{2}\\
		\Longleftrightarrow\;\;
		&\ii n^{-2/3}+\ii n^{-2/3}(z^{2n}-z^{n+1}-z^{n-1})=z^{n}(z-z^{-1})^{2}\\
		\Longleftrightarrow\;\;
		&\ii n^{-2/3}+4z^{n}\sin^{2}x_{j}=z^{n}[(z-z^{-1})^{2}+4\sin^{2}x_{j}]-\ii n^{-2/3}(z^{2n}-z^{n+1}-z^{n-1}).
	\end{align*}
	Now, define
	\begin{align*}
		&f(z):=\ii n^{-2/3}+4z^{n}\sin^{2}x_{j}\quad\text{and}\\
		&h(z):=\ii n^{-2/3}(z^{2n}-z^{n+1}-z^{n-1})-z^{n}[(z-z^{-1})^{2}+4\sin^{2}x_{j}].
	\end{align*}
	Then both functions $f$ and $h$ are analytic at every $z\in\C\backslash\{0\}\supset D_{j}$. 
	
	For each $n$ sufficiently large it can be verified that 
	\[
		\tilde z_j:=\left(\frac{n^{-2/3}}{4\sin^{2}x_{j}}\right)^{1/n}\e^{\ii x_{j}}
	\]
	is the unique, simple zero of $f(z)$ inside $D_{j}$. 
	Let us check that indeed $\tilde z_j\in D_j$. The condition on the angle is obviously satisfied, so we check the condition on the radius.
	Notice that $\sin x\geq x/2$ for $x\in(0,\pi/2]$, which implies 
	\begin{equation}\label{eq:1/sin}
	\text{for }n^{-1/3}g(n)\pi\leq\phi\leq\frac{\pi}{4}: \quad \frac{1}{2\sin \phi}\leq\frac{1}{\phi}\leq\frac{n^{1/3}}{g(n)\pi}.
	\end{equation}
	This yields, for $\phi=x_j$,
	\begin{equation}\label{eq:quotientnsin}
		\frac{n^{-2/3}}{4\sin^{2}x_{j}}\leq\frac{n^{-2/3}}{x^{2}_{j}}\leq\frac{1}{g^{2}(n)\pi^{2}},
	\end{equation}
	which converges to $0$ as $n\to\infty$. Thus, for all sufficiently large $n$, $\log(n^{-2/3}/4\sin^{2}x_{j})<0$ and
	\[
		\left|\log\frac{n^{-2/3}}{4\sin^{2}x_{j}}\right|=\log(4n^{2/3}\sin^{2}x_{j})\leq\log(4n^{2/3}).
	\]
	Now, we may write
	\[
		|\tilde{z}_{j}|=\exp\left(\frac 1 n \log \frac{n^{-2/3}}{4\sin^{2}x_{j}}\right)
		=1+\frac{1}{n}\log\frac{n^{-2/3}}{4\sin^{2}x_{j}}+\mathcal{O}\left(\frac{\log^{2}n}{n^{2}}\right),\quad n\to\infty.
	\]
	Using \eqref{eq:quotientnsin} we obtain the two-sided estimate
	\[
		\frac{1}{n}\log\frac{n^{-2/3}}{4}\leq\frac{1}{n}\log\frac{n^{-2/3}}{4\sin^{2}x_{j}}
		\leq-\frac{2}{n}(\log g(n)+\log\pi).
	\]
	As a result, the radial condition of \eqref{eq:NewRestrictions on angle & radius} is satisfied, so $\tilde{z}_j\in D_j$. Now Rouch\'{e}'s theorem guarantees that $f(z)+h(z)$ has a unique zero in the interior of $D_{j}$ provided that $|f(z)|>|h(z)|$ for all $z$ on the boundary $\partial D_{j}$.
	
	To prove this, we begin by considering the asymptotic behaviour of $h(z)$ in $D_{j}$ as $n\to\infty$. Since $D_{j}$ is a subregion of that determined by \eqref{eq:restrictions on angle & radius}, one may write
	\[
		\ii n^{-2/3}(z^{2n}-z^{n+1}-z^{n-1})=\mathcal{O}\left(\frac{n^{-2/3}}{g(n)}\right),\quad n\to\infty,
	\]
	where we have used \eqref{eq:radius power n asymp} and \eqref{eq:r asymp formula sqrt}.
	Due to the definition of $D_j$, every $z=r\e^{\ii\phi}\in D_{j}$ satisfies
	\[
		r=1+\mathcal{O}(n^{-\gamma}),\quad n\to\infty,
	\]
	which implies
	\begin{equation}\label{eq:z-z^{-1} asymp in D_{j}}
		z-z^{-1}=r\e^{\ii\phi}-r^{-1}\e^{-\ii\phi}=2\ii\sin\phi+\mathcal{O}(n^{-\gamma}),\quad n\to\infty.
	\end{equation}
	In particular,
	\begin{equation}\label{eq:(z-z^{-1})^{2} asymp}
		(z-z^{-1})^{2}=-4\sin^{2}\phi+\mathcal{O}(n^{-\gamma}),\quad n\to\infty.
	\end{equation}
	Since $\phi\in[x_{j}-\pi/n, x_{j}+\pi/n]$, one has $|\phi-x_{j}|\leq \pi/n$. Furthermore,
	\begin{equation}\label{eq:approx sin x_{j}}
		\sin\phi=\sin(\phi-x_{j})\cos x_{j}+\cos(\phi-x_{j})\sin x_{j}=\sin x_{j}+\mathcal{O}\left(\frac{1}{n}\right),\quad n\to\infty. 
	\end{equation}
	Now, combining \eqref{eq:radius power n asymp}, \eqref{eq:(z-z^{-1})^{2} asymp} and \eqref{eq:approx sin x_{j}} yields
	\[
		z^{n}[(z-z^{-1})^{2}+4\sin^{2}x_{j}]=\mathcal{O}\left(\frac{n^{-\gamma}}{g(n)}\right),\quad n\to\infty.
	\]
	In total, we can infer from $\gamma>2/3$ that, uniformly in $z\in D_{j}$,
	\begin{equation}\label{eq:small asymp Rouche}
		h(z)=\mathcal{O}\left(\frac{n^{-2/3}}{g(n)}\right),\quad n\to\infty.
	\end{equation}
	
	Next, we investigate the asymptotic behaviour of $f(z)$ on the boundary $\partial D_{j}$ as $n\to\infty$. 
	We proceed in three steps. First, using the definition of $D_j$,
	 we consider $z=r\e^{\ii\phi}$ with
	\[
	\phi=x_{j}\pm\frac{\pi}{n}\quad\text{and}\quad R_1\leq r\leq R_2.
	\]
	Here we recall that $x_{j}=(4j-1)\pi/2n$. Consequently,
	\[
	z^{n}=r^{n}\e^{\ii n(x_{j}\pm\pi/n)}=\ii r^{n}.
	\]
	Then
	\[
	|f(z)|=|\ii n^{-2/3}+4\ii r^{n}\sin^{2}x_{j}|\geq n^{-2/3}>|h(z)|
	\]
	for all sufficiently large $n$, 
	where we have taken \eqref{eq:small asymp Rouche} into account.
	
	As a second step, consider $z=r\e^{\ii\phi}$ with $r=R_1$ and $\phi\in[x_{j}-\pi/n, x_{j}+\pi/n]$. It follows that
	\[
	r^{n}=R^{n}_{1}=(1-n^{-\gamma})^{n}=\e^{-n^{1-\gamma}}(1+\mathcal{O}(n^{1-2\gamma})),\quad n\to\infty.
	\]
	Notice that $1-\gamma>0$ while $1-2\gamma<0$. Thus, by the reverse triangle inequality,
	\[
	|f(z)|\geq|n^{-2/3}-4r^{n}\sin^{2}x_{j}|=n^{-2/3}-4 R^{n}_{1}\sin^{2}x_{j}>|h(z)|
	\]
	for all sufficiently large $n$.
	
	As a third and last step, let $z=r\e^{\ii\phi}$ with $r=R_2$ and $\phi\in[x_{j}-\pi/n, x_{j}+\pi/n]$. One readily has
	\[
	r^{n}=R^{n}_{2}=\left(1-\frac{\log g(n)}{n}\right)^{n}
	=\frac{1}{g(n)}\left(1+\mathcal{O}\left(\frac{\log^{2}g(n)}{n}\right)\right),\quad n\to\infty.
	\]
	Hence, with \eqref{eq:1/sin} for $\phi=x_j$,
	\[
		|f(z)|\geq 4 R^{n}_{2}\sin^{2}x_{j}-n^{-2/3}\geq R^{n}_{2}x^{2}_{j}-n^{-2/3}\geq n^{-2/3}g(n)-n^{-2/3}>|h(z)|,
	\]
	for all  $n$ sufficiently large.
	
	Now we can apply Rouch\'{e}'s theorem which proves the existence of a unique solution $z_j$ of \eqref{eq:charac polynomial eq} inside $D_{j}$.

	Next, for each $n$ sufficiently large we prove that the found solutions $z_{j}=r_{j}\e^{\ii\phi_{j}}$,  $j\in\mathcal J(n)$, satisfy the condition $|z^{n+1}_{j}-1|<|z^{n}_{j}-z_{j}|$. Similarly as in \cite[Prop.~7]{bogli2021lieb}, we do this by considering
	\[
		k_{j}:=\frac{1-z^{n+1}_{j}}{z_{j}-z^{n}_{j}},
	\]
	and show that $|k_{j}|<1$. In fact,
	\[
		k_{j}=\frac{1}{z_{j}}\left(1-\frac{z^{n}_{j}(z_{j}-z^{-1}_{j})}{1-z^{n-1}_{j}}\right).
	\]
	Recalling the uniform asymptotic formula \eqref{eq:z-z^{-1} asymp in D_{j}} for $z\in D_{j}$, one can deduce with \eqref{eq:1/sin} for $\phi=\phi_j$,
	\[
		z_{j}-z^{-1}_{j}=2\ii\sin\phi_{j}\left(1+\mathcal{O}\left(\frac{n^{1/3-\gamma}}{g(n)}\right)\right),\quad n\to\infty.
	\] 
	Then, using that $z_{j}$ is a solution of \eqref{eq:charac polynomial eq} and with \eqref{eq:radius power n asymp},
	\[
		\frac{z^{n}_{j}(z_{j}-z^{-1}_{j})}{1-z^{n-1}_{j}}=\ii n^{-2/3}\frac{1-z^{n+1}_{j}}{z_{j}-z^{-1}_{j}}
		=\frac{n^{-2/3}}{2\sin\phi_{j}}\left(1+\mathcal{O}\left(\frac{1}{g(n)}\right)\right),\quad n\to\infty.
	\]		
	Employing $r_j^{-1}=1+\mathcal{O}(n^{-\gamma})$, one arrives at the asymptotic behaviour
	\[
		k_{j}=\e^{-\ii\phi_{j}}\left(1-\frac{n^{-2/3}}{2\sin\phi_{j}}+\mathcal{O}\left(\frac{n^{-2/3}}{g(n)\sin\phi_{j}}\right)\right), \quad n\to\infty.
	\]
	Note that, by the assumption on $g$ and $\gamma>2/3$, one obtains $0<g(n)\sin\phi_{j}\ll n^{\gamma-2/3}$, which implies that $n^{-\gamma}\ll\frac{n^{-2/3}}{g(n)\sin\phi_{j}}$ as $n\to\infty$.
	Thus we can conclude that $|k_{j}|<1$, $j\in\mathcal J(n)$, when $n$ is sufficiently large.
	
	
	In total, for $j\in\mathcal J(n)$ the found solution $z_{j}\in D_j$ gives rise to the eigenvalue
	\[
		\lambda=\lambda_{j}=\ii n^{-2/3}+z_{j}+z^{-1}_{j}.
	\]
	Bearing $\gamma>2/3$ and the definition of $D_j$ in mind, we can write
	\begin{equation}\label{eq:asymptotic for lambda Jacobi}
		\lambda_{j}=2\cos\phi_{j}+\ii n^{-2/3}+\mathcal{O}(n^{-\gamma}),\quad n\to\infty.
	\end{equation}
	Taking the imaginary parts on both sides of \eqref{eq:asymptotic for lambda Jacobi} yields
	$$
		\dist(\lambda_{j},[-2,2])=\Im\lambda_{j}=n^{-2/3}+\mathcal{O}(n^{-\gamma}),\quad n\to\infty.
	$$
	
	Notice that $\cos\phi_{j}\in[1/\sqrt{2},1)$ by \eqref{eq:restrictions on angle & radius}. Consequently, $\Re\lambda_{j}>0$ for $n$ sufficiently large. This readily implies that $\dist(\lambda_{j},\{-2,2\})=|\lambda_{j}-2|$. Since $1-\cos x\geq x^{2}/4$ for $x\in(0,\pi/2]$,
	\[
		\frac{1}{1-\cos\phi_{j}}\leq\frac{4}{\phi^{2}_{j}}\leq\frac{4}{\pi^{2}}\frac{n^{2/3}}{g^{2}(n)},
	\]
	where we have used the first restriction of \eqref{eq:restrictions on angle & radius}. Applying this estimate to \eqref{eq:asymptotic for lambda Jacobi} yields
	$$
		\dist(\lambda_{j},\{-2,2\})=|\lambda_{j}-2|=2(1-\cos\phi_{j})\left(1+\mathcal{O}\left(\frac{1}{g^{2}(n)}\right)\right),\quad n\to\infty.
	$$
	In addition,
	\[
		\lambda^{2}_{j}=4\cos^{2}\phi_{j}+\mathcal{O}(n^{-2/3}),\quad n\to\infty,
	\]
	which results in the asymptotic formula
	\[
		4-\lambda^{2}_{j}=4\sin^{2}\phi_{j}+\mathcal{O}(n^{-2/3}),\quad n\to\infty.
	\]
	Together with \eqref{eq:1/sin} for $\phi=\phi_j$, we arrive at
	$$
		|\lambda^{2}_{j}-4|=4\sin^{2}\phi_{j}\left(1+\mathcal{O}\left(\frac{1}{g^{2}(n)}\right)\right),\quad n\to\infty.
$$
	This concludes the proof.
	\end{proof}
	
	Now, everything is in place for deriving \eqref{eq:sup of new L-T ineq over potential}.

	\begin{proof}[Proof of Theorem \ref{thm:sharpness thm}]
	Suppose that $f:[0,\infty)\to(0,\infty)$ is a continuous, non-increasing function such that 
	\[
		\int_{0}^{\infty} f(x)\;\dd x=\infty.
	\]
	Then it is obvious that $F(t)\to\infty$ as $t\to\infty$.
	
	From \eqref{eq:norm of potential Jacobi}, Proposition~\ref{prop:Jacobi} and the fact that $f$ is non-increasing, we get
	\begin{equation}\label{eq:lower bd for sup of L-T over potential}
	\begin{aligned}
		&\frac{1}{\|v\|^{p}_{\ell^{p}}}\sum_{\lambda\in\sigma_{\dd}(J)} 
		\frac{\dist(\lambda,[-2,2])^{p}}{|\lambda^{2}-4|^{1/2}}f\left(-\log\left(\frac{\dist(\lambda,[-2,2])}{\dist(\lambda,\{-2,2\})}\right)\right)
		\\
		&\geq\frac{1}{n^{1-2p/3}}\sum_{j\in\mathcal J(n)}\frac{n^{-2p/3}}{2^{p}}\frac{1}{4\sin\phi_{j}}
			f\left(\log(8(1-\cos\phi_{j})n^{2/3})\right) \\
		&\geq\frac{1}{2^{p+2}n}\sum_{j\in\mathcal J(n)}\frac{1}{\phi_{j}}f\left(\log(4\phi^{2}_{j}n^{2/3})\right) 		
	\end{aligned}
	\end{equation}
	where we have used $\sin x\leq x$ and $1-\cos x\leq x^{2}/2$ for $x\in(0,\pi/2]$ in the last step.
	
	From the definition of $D_j$, it can be seen that $|\phi_j-x_j|\leq \pi/n$, hence
	\[
		\phi_{j}\leq 2x_{j}=\frac{(4j-1)\pi}{n}=:\theta_{j},
	\]
	for $n$ sufficiently large. Now, applying this to \eqref{eq:lower bd for sup of L-T over potential} yields
	\begin{align*}
		&\frac{1}{2^{p+2}n}\sum_{j\in\mathcal J(n)} f\left(\log(4\phi^{2}_{j}n^{2/3})\right)\\
		&\geq\frac{1}{2^{p+2}n}\sum_{j\in\mathcal J(n)}\frac{1}{\theta_{j}}f\left(\log(4\theta^{2}_{j}n^{2/3})\right)
		\geq\frac{1}{2^{p+2}n}\int_{(2n^{2/3}g(n)+7)/4}^{(n-2)/8} \frac{1}{\theta_{j}}f\left(\log(4\theta^{2}_{j}n^{2/3})\right)\;\dd j\\
		&\geq\frac{1}{2^{p+5}\pi}\int_{4n^{-1/3}g(n)\pi}^{\pi/4} \frac{2}{\theta_{j}}f\left(\log(4\theta^{2}_{j}n^{2/3})\right)\;\dd\theta_{j}
		=\frac{1}{2^{p+5}\pi}\int_{\log(64\pi^{2}g^{2}(n))}^{\log(\pi^{2}n^{2/3}/4)} f(x)\;\dd x,
	\end{align*}
	where we have made the change of variables $x=\log(4\theta^{2}_{j}n^{2/3})$.
	
	At this point, we put $C_{p}:=1/2^{p+5}\pi$ and $n_{*}\geq 2$ can be chosen so large that the above uniform asymptotics hold. Besides, for all $n\geq n_{*}$ we may assume $64\pi^{2}g^{2}(n)\leq g^{3}(n)$.
	Together with $F(x)\leq f(0) x$, we obtain
	\[
		\int_{\log(64\pi^{2}g^{2}(n))}^{\log(\pi^{2}n^{2/3}/4)} f(x)\;\dd x\geq F(\log n^{2/3})-F(\log g^{3}(n))\geq F(\log n^{2/3})-3f(0)\log g(n).
	\]
	Combining this with \eqref{eq:lower bd for sup of L-T over potential} proves \eqref{eq:sup of new L-T ineq over potential} as desired.
\end{proof}

Next, we prove the divergence rate for non-decreasing functions $f$.

%

\begin{proof}[Proof of Theorem~\ref{thm:sharpness thm for increasing}]
	Let $f:[0,\infty)\to(0,\infty)$ be a continuous, non-decreasing function such that $f(\log t^{2})/t$ is monotonic for $t\geq x_{0}$. We consider the Jacobi operator $J$ from Proposition \ref{prop:Jacobi}.
	There exists $n_{0}\geq 2$ such that for all $n\geq n_{0}$ we have $g(n)\geq  2x_{0}/\pi$. In particular, for all $t\geq n^{-1/3}g(n)\pi$
	\[
		\frac{t^{2}n^{2/3}}{4}\geq x^{2}_{0}.
	\]
	Then the function
	\[
		t\mapsto \frac{1}{t}f\left(\log\left(\frac{t^{2}n^{2/3}}{4}\right)\right)
	\]
	is also monotonic for $t\geq n^{-1/3}g(n)\pi$.
	
 Using \eqref{eq:norm of potential Jacobi}, Proposition \ref{prop:Jacobi} and the fact that $1-\cos x\geq x^{2}/4$ for $x\in(0,\pi/2]$ results in
	\begin{align*}
		&\frac{1}{\|v\|^{p}_{\ell^{p}}}\sum_{\lambda\in\sigma_{\dd}(J)} 
		\frac{\dist(\lambda,[-2,2])^{p}}{|\lambda^{2}-4|^{1/2}}f\left(-\log\left(\frac{\dist(\lambda,[-2,2])}{\dist(\lambda,\{-2,2\})}\right)\right)\\
		&\geq\frac{1}{n^{1-2p/3}}\sum_{j\in\mathcal J(n)}\frac{n^{-2p/3}}{2^{p}}\frac{1}{4\sin\phi_{j}}
		f\left(\log((1-\cos\phi_{j})n^{2/3})\right) \\
		&\geq\frac{1}{2^{p+2}n}\sum_{j\in\mathcal J(n)}\frac{1}{\phi_{j}}
		f\left(\log\left(\frac{1}{4}\phi^{2}_{j}n^{2/3}\right)\right).
	\end{align*}
	Recalling $x_{j}=(4j-1)\pi/2n$, one may deduce  from the definition of $D_j$ that
	\[
		\frac{x_{j}}{2}\leq\phi_{j}\leq 2x_{j}
	\]
	 for $n$ sufficiently large. Therefore,
	\begin{align*}
		&\frac{1}{2^{p+2}n}\sum_{j\in\mathcal J(n)}\frac{1}{\phi_{j}}
		f\left(\log\left(\frac{1}{4}\phi^{2}_{j}n^{2/3}\right)\right) 
		\geq
		\frac{1}{2^{p+3}n}\int_{(2n^{2/3}g(n)+7)/4}^{(n-10)/8} \frac{1}{x_{j}}f\left(\log\left(\frac{1}{16}x^{2}_{j}n^{2/3}\right)\right)\;\dd j\\
		&\geq
		\frac{1}{2^{p+5}\pi}\int_{4n^{-1/3}g(n)\pi}^{\pi/8} \frac{2}{x_{j}}f\left(\log\left(\frac{1}{16}x^{2}_{j}n^{2/3}\right)\right)\;\dd x_{j}
		=C_{p}\int_{\log(\pi^{2}g^{2}(n))}^{\log(2^{-10}\pi^{2}n^{2/3})} f(x)\;\dd x,
	\end{align*}
	where we have set $C_{p}=1/2^{p+5}\pi>0$.

Recall that $0<\varepsilon<2/3$.
	Now, we choose $n_{*}\geq n_{0}$ such that for all $n\geq n_{*}$ we get $2^{-10}n^{2/3-\varepsilon}\geq 1$. Hence, one can conclude that
	\[
		\int_{\log(\pi^{2}g^{2}(n))}^{\log(2^{-10}\pi^{2}n^{2/3})} f(x)\;\dd x
		\geq F\left(\log(\pi^{2}n^{\varepsilon})\right)-F\left(\log(\pi^{2}g^{2}(n))\right)
		\geq f(0)\log\frac{n^{\varepsilon}}{g^{2}(n)},
	\]
	which gives rise to \eqref{eq:sup of new L-T ineq over potential for increasing}.
\end{proof}


Finally we prove \eqref{eq:sharp diamond_G-K}.

\begin{proof}[Proof of Theorem~\ref{thm:sharpconstJacobi}]
	First, for $n\in\N$ we let $\omega$ depend on $n$ as
	\[
		\omega(n):=\arctan\left(4(2-\sqrt{2})n^{2/3}\right)\in (0,\pi/2).
	\]
	Due to spectral analysis in Proposition~\ref{prop:Jacobi}, there exists $n_{*}\in\N$ such that for all $n\geq n_{*}$ and all $j\in\mathcal{J}(n)$ the operator $J$ has the eigenvalue $\lambda_j$ inside $[0,2] + \ii(0,n^{-2/3}]$ with
	\[
		|\Im\lambda_{j}|\geq\frac{n^{-2/3}}{2}\quad\text{and}\quad 1-\cos\phi_{j}\leq|\lambda_{j}-2|\leq 4(1-\cos\phi_{j}).
	\]
	Besides, recalling $x_{j}=\frac{(4j-1)\pi}{2n}$,
	\[
		\frac{x_{j}}{2}\leq\phi_{j}\leq x_{j}+\frac{\pi}{n}\leq\frac{\pi}{4}.
	\]
	
	One can see that
	\[
		\frac{2-\Re\lambda_{j}}{|\Im\lambda_{j}|}<\frac{|\lambda_{j}-2|}{|\Im\lambda_{j}|}\leq 8(1-\cos\phi_{j})n^{2/3}
		\leq 4(2-\sqrt{2})n^{2/3}=\tan(\omega(n)),
	\]
	therefore, bearing~\eqref{eq:norm of potential Jacobi} in mind,
	\begin{align*}
		\frac{1}{\varphi(\omega(n))\|v\|^{p}_{\ell^{p}}}
		\sum_{\substack{\lambda\in\sigma_{\dd}(J) \\ 2-\Re\lambda<\tan(\omega(n))|\Im\lambda|}} 
		|\lambda-2|^{p-1/2}
		\geq\frac{1}{\varphi(\omega(n))n^{1-2p/3}}\sum_{j\in\mathcal{J}(n)}(1-\cos\phi_{j})^{p-1/2}.
	\end{align*}
	Noticing that $1-\cos x\geq x^{2}/4$ for $x\in(0,\pi/2]$,
	\begin{align*}
		\sum_{j\in\mathcal{J}(n)}(1-\cos\phi_{j})^{p-1/2}
		\gtrsim\sum_{j\in\mathcal{J}(n)}\phi^{2p-1}_{j} \geq \frac{1}{2^{2p-1}}\int_{n^{2/3}g(n)+3/2}^{(n-2)/16} x^{2p-1}_{j}\;\dd j,
	\end{align*}
	where we have used \eqref{eq: monotone estimate} and $\phi_{j}\geq x_{j}/2$ in the last step.
	
	By change of variable,
	\[
		\int_{n^{2/3}g(n)+3/2}^{(n-2)/16} x^{2p-1}_{j}\;\dd j\geq\frac{n}{2\pi}\int_{2n^{-1/3}g(n)\pi+(5\pi/2n)}^{\pi/8-(3\pi/4n)} x^{2p-1}_{j}\;\dd x_{j}
		\gtrsim n 
	\]
	which yields, using the assumption $\varphi(\omega)\ll \tan^p(\omega)$,
	\begin{align*}
		\frac{1}{\varphi(\omega(n))\|v\|^{p}_{\ell^{p}}}
		\sum_{\substack{\lambda\in\sigma_{\dd}(J) \\ 2-\Re\lambda<\tan(\omega(n))|\Im\lambda|}} 
		|\lambda-2|^{p-1/2}
		\gtrsim\frac{n^{2p/3}}{\varphi(\omega(n))}=\frac{1}{4^{p}(2-\sqrt{2})^{p}}\frac{\tan^{p}(\omega(n))}{\varphi(\omega(n))}\to\infty,
	\end{align*}
	as $n\to\infty$. This proves \eqref{eq:sharp diamond_G-K}. Note that the non-displayed constants depend only on $p$.
\end{proof}

\section*{Acknowledgements}
	The PhD of S.~P.\ is funded through a Development and Promotion of Science and Technology (DPST) scholarship of the Royal Thai Government, ref.\ 5304.1/3758.
	
	\bibliographystyle{acm}
	\bibliography{references}
\end{document}